\newif\ifAppendix \Appendixtrue
\newif\ifPreprint \Preprinttrue
\newif\ifSubmission \Submissionfalse
\patchcmd{\@settitle}{\uppercasenonmath\@title}{\scshape\large}{}{}
\patchcmd{\@setauthors}{\MakeUppercase}{\scshape\normalsize}{}{}
\newtheorem{theorem}{Theorem}[section]
\newtheorem{lemma}[theorem]{Lemma}
\newtheorem{proposition}[theorem]{Proposition}
\newtheorem{assumption}{Assumption}
\theoremstyle{definition}
\theoremstyle{remark}
\newtheorem{remark}[theorem]{Remark}
\newcommand{%
  \tikzexternalenable
  \tikzsetnextfilename{}
  \input{.tikz}
  \tikzexternaldisable
}[1]{%
  \tikzexternalenable
  \tikzsetnextfilename{#1}
  \input{#1.tikz}
  \tikzexternaldisable
}
\newcommand{\ulVar}{x}
\newcommand{\llVar}{y}
\newcommand{\ulIdx}{\mathrm{u}}
\newcommand{\llIdx}{\mathrm{l}}
\newcommand{\lowerLLVar}{\tilde{\llVar}}
\newcommand{\dimUL}{{n_\ulVar}}
\newcommand{\dimLL}{{n_\llVar}}
\newcommand{\nrConstrUL}{{m_\ulIdx}}
\newcommand{\nrConstrLL}{{m_\llIdx}}
\newcommand{\st}{\text{s.t.}}
\newcommand{\field}{\mathbb}
\newcommand{\R}{\field{R}}
\newcommand{\Z}{\field{Z}}
\newcommand{\N}{\field{N}}
\newcommand{\abs}[1]{|{#1}|}
\newcommand{\Abs}[1]{\left|{#1}\right|}
\newcommand{\card}{\abs}
\newcommand{\defsep}{\colon}
\newcommand{\defset}[3][\defsep]{\set{#2#1#3}}
\newcommand{\Defset}[3][\defsep]{\Set{#2#1#3}}
\newcommand{\norm}[1]{\|{#1}\|}
\newcommand{\Norm}[1]{\left\|{#1}\right\|}
\newcommand{\set}[1]{\{#1\}}
\newcommand{\Set}[1]{\left\{#1\right\}}
\newcommand{\feasibleset}{\mathcal{F}}
\newcommand{\feasset}{\feasibleset}
\newcommand{\eps}{\varepsilon}
\newcommand{\abbr}[1][abbrev]{#1.\xspace}
\newcommand{\eg}{\abbr[e.g]}
\newcommand{\ie}{\abbr[i.e]}
\newcommand{\wrt}{\abbr[w.r.t]}
\newcommand{\objref}[4]{\def\obj@rg{#4}%
  #1\ifx\obj@rg\empty#2\else#3\xspace\ref{#4}--\fi\ref}
\newcommand{\Probref}[1]{\text{Problem}~\eqref{#1}}
\DeclareMathOperator{\graph}{graph}
\DeclareMathOperator{\sign}{sign}
\DeclareMathOperator{\interior}{int}
\newcommand{\define}{\mathrel{{\mathop:}{=}}}
\newcommand{\enifed}{\mathrel{{=}{\mathop:}}}
\newcommand{\ubar}[1]{\underaccent{\bar}{#1}}
\newcommand{\lb}[1]{\ubar{{#1}}}
\newcommand{\ub}[1]{\bar{{#1}}}
\newcommand\rhombus[7][]{
  \FPset\xOne{#2}
  \FPset\yOne{#3}
  \FPset\xTwo{#4}
  \FPset\yTwo{#5}
  \FPset\L{#6}

  \FPeval\xU{((yTwo + L*xTwo) - (yOne - L*xOne)) / (2*L)}
  \FPeval\yU{L * xU + (yOne - L*xOne)}
  \FPeval\xL{((yOne + L*xOne) - (yTwo - L*xTwo)) / (2*L)}
  \FPeval\yL{L * xL + (yTwo - L*xTwo)}

  \path[fill=#7]
  (\FPprint\xOne,\FPprint\yOne)
  -- (\FPprint\xU,\FPprint\yU)
  -- (\FPprint\xTwo,\FPprint\yTwo)
  -- (\FPprint\xL,\FPprint\yL)
  -- cycle;

  \ifthenelse{\equal{#1}{coordinates}}{
    \FPround\xUR\xU{4}
    \FPround\yUR\yU{4}
    \FPround\xLR\xL{4}
    \FPround\yLR\yL{4}
    \node[fill=black, circle, inner sep=0.05cm] (upper) at (\FPprint\xU,\FPprint\yU) {};
    \node[above=1pt of upper] () {$(\FPprint\xUR,\FPprint\yUR)$};
    \node[fill=black, circle, inner sep=0.05cm] (lower) at (\FPprint\xL,\FPprint\yL) {};
    \node[below=1pt of lower] () {$(\FPprint\xLR,\FPprint\yLR)$};
  }{}
}
\newcommand\gapRhombus[8][]{
  \FPset\xOne{#2}
  \FPset\yOne{#3}
  \FPset\xTwo{#4}
  \FPset\yTwo{#5}
  \FPset\L{#6}
  \FPset\eps{#8}

  \FPeval\yOnePlusEps{yOne + eps}
  \FPeval\yTwoPlusEps{yTwo + eps}
  \FPeval\yOneMinusEps{yOne - eps}
  \FPeval\yTwoMinusEps{yTwo - eps}

  \FPeval\xU{((yTwo + L*xTwo) - (yOne - L*xOne)) / (2*L)}
  \FPeval\yU{L * xU + (yOne - L*xOne) + eps}
  \FPeval\xL{((yOne + L*xOne) - (yTwo - L*xTwo)) / (2*L)}
  \FPeval\yL{L * xL + (yTwo - L*xTwo) - eps}

  \path[fill=#7]
  (\FPprint\xOne,\FPprint\yOnePlusEps)
  -- (\FPprint\xU,\FPprint\yU)
  -- (\FPprint\xTwo,\FPprint\yTwoPlusEps)
  -- (\FPprint\xTwo,\FPprint\yTwoMinusEps)
  -- (\FPprint\xL,\FPprint\yL)
  -- (\FPprint\xOne,\FPprint\yOneMinusEps)
  -- cycle;

  \ifthenelse{\equal{#1}{coordinates}}{
    \FPround\xUR\xU{4}
    \FPround\yUR\yU{4}
    \FPround\xLR\xL{4}
    \FPround\yLR\yL{4}
    \node[fill=black, circle, inner sep=0.05cm] (upper) at (\FPprint\xU,\FPprint\yU) {};
    \node[above=1pt of upper] () {$(\FPprint\xUR,\FPprint\yUR)$};
    \node[fill=black, circle, inner sep=0.05cm] (lower) at (\FPprint\xL,\FPprint\yL) {};
    \node[below=1pt of lower] () {$(\FPprint\xLR,\FPprint\yLR)$};
  }{}
}
\newcommand\rhombusWithYBounds[9][]{
  \FPset\xOne{#2}
  \FPset\yOne{#3}
  \FPset\xTwo{#4}
  \FPset\yTwo{#5}
  \FPset\L{#6}
  \FPset\yLB{#8}
  \FPset\yUB{#9}

  \FPeval\xU{((yTwo + L*xTwo) - (yOne - L*xOne)) / (2*L)}
  \FPeval\yU{L * xU + (yOne - L*xOne)}
  \FPeval\xL{((yOne + L*xOne) - (yTwo - L*xTwo)) / (2*L)}
  \FPeval\yL{L * xL + (yTwo - L*xTwo)}

  \ifthenelse{\lengthtest{\yUB pt > \yU pt}}{
    \FPeval\xULeft{xU}
    \FPeval\yULeft{yU}

    \FPeval\xURight{xU}
    \FPeval\yURight{yU}
  }
  {
    \FPeval\xULeft{(yUB - (yOne - L*xOne)) / (L)}
    \FPeval\yULeft{(L * xULeft) + (yOne - L*xOne)}

    \FPeval\xURight{(yUB - (yTwo + L*xTwo)) / (-L)}
    \FPeval\yURight{(-1.0 * L * xURight) + (yTwo + L*xTwo)}
  }

  \ifthenelse{\lengthtest{\yLB pt < \yL pt}}{
    \FPeval\xLRight{xL}
    \FPeval\yLRight{yL}
    \FPeval\xLLeft{xL}
    \FPeval\yLLeft{yL}
  }
  {
    \FPeval\xLRight{(yLB - (yTwo - L*xTwo)) / (L)}
    \FPeval\yLRight{(L * xLRight) + (yTwo - L*xTwo)}

    \FPeval\xLLeft{(yLB - (yOne + L*xOne)) / (-L)}
    \FPeval\yLLeft{(-1.0 * L * xLLeft) + (yOne + L*xOne)}

  }

  \path[fill=#7]
  (\FPprint\xOne,\FPprint\yOne)
  -- (\FPprint\xULeft,\FPprint\yULeft)
  -- (\FPprint\xURight,\FPprint\yURight)
  -- (\FPprint\xTwo,\FPprint\yTwo)
  -- (\FPprint\xLRight,\FPprint\yLRight)
  -- (\FPprint\xLLeft,\FPprint\yLLeft)
  -- cycle;
}
\newcommand{\ForNeNotation}[1]{\ensuremath{#1}}
\newcommand{\nodes}{\ForNeNotation{V}\xspace}
\newcommand{\arcs}{\ForNeNotation{A}\xspace}
\newcommand{\node}{\ForNeNotation{u}\xspace}
\newcommand{\otherNode}{\ForNeNotation{v}\xspace}
\newcommand{\arc}{\ForNeNotation{a}\xspace}
\newcommand{\inArcs}[1][\node]{\ForNeNotation{\delta^{\In}(#1)}\xspace}
\newcommand{\outArcs}[1][\node]{\ForNeNotation{\delta^{\Out}(#1)}\xspace}
\newcommand{\pressure}{\ForNeNotation{p}\xspace}
\newcommand{\press}{\pressure}
\newcommand{\pseudocriticalPressure}{\ForNeNotation{\press_\mathrm{c}}\xspace}
\newcommand{\massflow}{\ForNeNotation{q}\xspace}
\newcommand{\mflow}{\massflow}
\newcommand{\massflux}{\ForNeNotation{\chi}\xspace}
\newcommand{\mflux}{\massflux} 
\newcommand{\volumetricFlow}[1][]{\volumetricFlow[#1]}
\newcommand{\friction}{\ForNeNotation{\lambda}\xspace}
\newcommand{\nfric}{\ForNeNotation{\theta}\xspace}
\newcommand{\compressibilityfactor}{\ForNeNotation{z}\xspace}
\newcommand{\compfactor}{\compressibilityfactor}
\newcommand{\squaredMachNumber}{\ForNeNotation{\eta}\xspace}
\newcommand{\mach}{\squaredMachNumber}
\newcommand{\soundspeed}{\ForNeNotation{c}\xspace}
\newcommand{\pipe}{\mathrm{pi}}
\newcommand{\shortCut}{\mathrm{sp}}
\newcommand{\shortPipe}{\shortCut}
\newcommand{\cv}{\mathrm{cv}}
\newcommand{\cs}{\mathrm{cs}}
\newcommand{\va}{\mathrm{va}}
\newcommand{\arcsPipes}{\ForNeNotation{\arcs_\pipe}\xspace}
\newcommand{\arcsShortPipes}{\ForNeNotation{\arcs_\shortPipe}\xspace}
\newcommand{\arcsCV}{\ForNeNotation{\arcs_\cv}\xspace}
\newcommand{\arcsCS}{\ForNeNotation{\arcs_\cs}\xspace}
\newcommand{\arcsVA}{\ForNeNotation{\arcs_\va}\xspace}
\newcommand{\pipes}{\arcsPipes}
\newcommand{\shortPipes}{\arcsShortPipes}
\newcommand{\valves}{\arcsVA}
\newcommand{\controlValves}{\arcsCV}
\newcommand{\compressorStations}{\arcsCS}
\newcommand{\valveOpen}{o}
\newcommand{\In}{\mathrm{in}} 
\newcommand{\Out}{\mathrm{out}} 
\newcommand{\temperature}{\ForNeNotation{T}\xspace}
\newcommand{\temp}{\temperature} 
\newcommand{\pseudocriticalTemperature}{\ForNeNotation{\temperature_\mathrm{c}}\xspace}
\newcommand{\density}{\rho}
\newcommand{\dens}{\density} 
\newcommand{\velocity}{v}
\newcommand{\specificGasConstant}{R_{\mathrm{s}}}
\newcommand{\length}{\ForNeNotation{L}}
\newcommand{\diameter}{\ForNeNotation{D}}
\newcommand{\area}{\ForNeNotation{A}}
\newcommand{\consFuel}[1][d]{\cons^\mathrm{fuel}}
\newcommand{\slack}{\sigma}
\newcommand{\vectorOfSlackVars}[1]{\ForNeNotation{\vektor{\slack}\ifx#1\empty\else_{#1}\fi}\xspace}
\newcommand{\contractPos}{\ensuremath{c}\xspace}
\newcommand{\nodesInLimitingContractPos}[1][\contractPos]{\ensuremath{V(\limitings)}\xspace}
\newcommand{\limitings}[1][\contractPos]{\ensuremath{\mathcal{L}(#1)}\xspace}
\newcommand{\multiindices}{I_i}
\newcommand{\multind}{\multiindices}
\newcommand{\rightindex}{r_i}
\newcommand{\rind}{\rightindex}
\newcommand{\allindices}{\mathcal{I}_i}
\newcommand{\allind}{\allindices}
\newcommand{\boxset}{\mathcal{X}}
\newcommand{\splitbox}{S}
\DeclareMathOperator*{\argmin}{arg\,min}
\newcommand{\dparlong}[2]{\frac{\partial#2}{\partial#1}}
\newcommand{\dparshort}[2]{\partial_{#1}#2}
\newcommand{\dpar}{\dparlong}
\newcommand{\diff}{\xspace\,\mathrm{d}}
\newcommand{\mfluxaux}{h}
\newcommand{\RsT}{\specificGasConstant \temp}
\newcommand{\plbaux}[1]{#1^2 - \mflux^2 \RsT}
\newcommand{\pubaux}[1]{1 + \alpha #1}
\newcommand{\plb}{\plbaux{\press}}
\newcommand{\plbu}{\plbaux{\press_\node}}
\newcommand{\pub}{\pubaux{\press}}
\newcommand{\pubu}{\pubaux{\press_\node}}
\newcommand{\myeps}{\varepsilon}
\newcommand{\myfunc}[1]{\lipschitzconstant * sin(deg(#1))}
\newcommand{\filllipschitzbox}[4]{
  \addplot[fill=#3, #4, \thickness] coordinates {({(#1) - (#2)/2+\xoffset}, {\myfunc{#1} + \lipschitzconstant * (#2)/2}) ({(#1) + (#2)/2+\xoffset}, {\myfunc{#1} + \lipschitzconstant * (#2)/2}) ({(#1) + (#2)/2+\xoffset}, {\myfunc{#1} - \lipschitzconstant * (#2)/2}) ({(#1) - (#2)/2+\xoffset}, {\myfunc{#1} - \lipschitzconstant * (#2)/2}) ({(#1) - (#2)/2+\xoffset}, {\myfunc{#1} + \lipschitzconstant * (#2)/2})};
}
\providecommand{\sfnamesize}{\relsize{0}}
\providecommand{\sfname}[1]{\textsf{\sfnamesize#1}\xspace}
\newcommand{\CPLEX}{\sfname{CPLEX}}
\newcommand{\SNOPT}{\sfname{SNOPT}}
\newcommand{\BASBLib}{\sfname{BASBLib}}
\newcommand{\GAMS}{\sfname{GAMS}}
\newcommand{\Python}{\sfname{Python}}
\newcommand{\CPU}{\sfname{Intel(R) Core(TM) i7-8550U CPU}}
\newcommand{\rev}[1]{#1}
\begin{document}

\title[An SLR Method for MINLPs with Multivariate Lipschitz
Constraints]%
{A Successive Linear Relaxation Method for MINLPs with Multivariate
  Lipschitz Continuous Nonlinearities}

\author[J. Grübel, R. Krug, M. Schmidt, W. Wollner]%
{Julia Grübel, Richard Krug, Martin Schmidt, Winnifried Wollner}

\address[J. Gr\"ubel]{
  (a) Friedrich-Alexander-Universit\"at Erlangen-Nürn\-berg (FAU),
  Chair of Economic Theory,
  Lange Gasse~20, 90403 N\"urnberg, Germany,
  (b) Energie Campus N\"urnberg,
  F\"urther Str.~250, 90429~N\"urnberg, Germany}
\email{julia.gruebel@fau.de}

\address[R. Krug]{%
  Friedrich-Alexander-Universit\"at Erlangen-Nürn\-berg (FAU),
  Department of Data Science,
  Cauerstr.~11, 91058~Erlangen,
  Germany
}
\email{richard.krug@fau.de}

\address[M. Schmidt]{%
  Trier University,
  Department of Mathematics,
  Universitätsring~15,
  54296~Trier,
  Germany
}
\email{martin.schmidt@uni-trier.de}

\address[W. Wollner]{%
  Universit\"at Hamburg,
  MIN Fakul\"at,
  Fachbereich Mathematik,
  Bundesstr.~55,
  20146~Hamburg,
  Germany
}
\email{winnifried.wollner@uni-hamburg.de}

\date{\today}

\begin{abstract}
  We present a novel method for mixed-integer optimization
problems with multivariate and Lipschitz continuous nonlinearities.
In particular, we do not assume that the nonlinear constraints are
explicitly given but that we can only evaluate them and that we know
their global Lipschitz constants.
The algorithm is a successive linear relaxation method in which we
alternate between solving a master problem, which is a mixed-integer
linear relaxation of the original problem, and a subproblem, which is
designed to tighten the linear relaxation of the next master problem
by using the Lipschitz information about the respective functions.
By doing so, we follow the ideas of Schmidt et al.\ (2018, 2021)
and improve the tackling of multivariate constraints.
Although multivariate nonlinearities obviously increase modeling
capabilities, their incorporation also significantly increases the
computational burden of the proposed algorithm.
We prove the correctness of our method and also derive a worst-case
iteration bound.
Finally, we show the generality of the addressed problem class and the
proposed method by illustrating that both bilevel optimization
problems with nonconvex \rev{and quadratic} lower levels as well as
nonlinear and mixed-integer models of gas transport can be tackled by
our method.
We provide the necessary theory for both applications and brief\/ly
illustrate the outcomes of the new method when applied to these two
problems.


\end{abstract}

\keywords{Mixed-Integer Nonlinear Optimization,
Global Optimization,
Lipschitz Optimization,
Bilevel Optimization,
Gas Networks%
%
%
}
\subjclass[2020]{90-08, 
90C11, 
90C26, 
90C30, 
90C90
%
%
}

\maketitle

\section{Introduction}
\label{sec:introduction}

Mixed-integer nonlinear optimization problems (MINLPs) form one of
today's most important classes of optimization models.
The reason is, at least, twofold.
First, the capability of modeling nonlinearities allows to include
many sophisticated aspects of, \eg, physics, economics, engineering, or
medicine.
Second, the incorporation of integer variables makes it possible to
model decision making such as turning on or off a machine or investing
in a product or not.
Of course, this modeling flexibility comes at the price of models that
are hard to solve for realistically sized instances since MINLPs are
NP-hard in general \cite{Kannan_Monma:1978,Garey_Johnson:1979}.
Nevertheless, the theoretical and algorithmic advances of the last years
and decades make it possible today to solve rather large-scale
instances to global optimality in a reasonable amount of time
\cite{Belotti2013U}---in particular if problem-specific structural
properties of the model can be exploited algorithmically;
see, \eg,
\cite{Duran1986,Fletcher1994,Bonami2008,KRONQVIST2019105,Kronqvist_et_al:2019}
for the convex as well as
\cite{Al-Khayyal_Sherali:2000,%
  Smith_Pantelides:1997,%
  Tawarmalani_Sahinidis:2002,
  Leyffer_et_al:2008}
for the nonconvex case.

In addition, there has been a significant amount of research
devoted to the cases in which only very few structural assumptions
can be exploited.
This is the framework considered in this paper since we assume that
certain functions of the model are not given explicitly but can be
evaluated and some analytical properties such as Lipschitz continuity
is known.
To illustrate why this is important, let us sketch three areas of
applications in which only few assumptions on the structure of the
model (or on specific parts of the model) can be made.
First, many mixed-integer optimization problems subject to ordinary or
partial differential equations fit into this context.
In many cases, approaches in this field are driven by incorporating
the so-called control-to-state map into the optimization model
to ``eliminate'' the differential equation from the model; see,
\eg, \cite{Buchheim_et_al:2018,BeckerMeidnerVexler:2007,%
  HinzePinnauUlbrichUlbrich:2009,Troeltzsch:2010}.
This mapping, however, cannot be stated explicitly in general and one
thus has to resort to exploiting analytical properties such as Lipschitz
continuity and the ability to evaluate the mapping, at least in an
approximate way.
Second, and rather related to the first example, optimization models
incorporating constraints that rely on calls to expensive simulation
software can be cast in the framework mentioned in this paper as well
if enough analytic information is known about the input-output mapping
of the simulation code; see, \eg,
\cite{Conn_et_al:2009,Bajaj_et_al:20221}.
Third and finally, bilevel optimization problems can also be
interpreted as models in which a single constraint makes the problem
much harder to solve \cite{Dempe:2020,Dempe-et-al:2015,Dempe:2002}.
In this case, it is the constraint that models the optimality of the
decisions of the lower-level (or follower) problem given the decisions
of the leader (which is the decision maker modeled in the upper-level
problem).
The best-reply function, which models the optimal response of the
follower, can usually not be written in closed form but it can be
evaluated (by solving the lower-level problem for a given feasible
point of the upper level) and, under suitable assumptions
\cite{Dempe:2002}, analytical properties such as Lipschitz continuity
can be established.

This paper addresses a special class of MINLPs for which closed-form
expressions for the nonlinearities are not available but Lipschitz
continuity is guaranteed with known Lipschitz constants.
To this end, all three areas of application discussed above can be
addressed by the method proposed in this paper.
One part of our contribution, indeed, is that the case studies
presented later in Sections~\ref{sec:application-bilevel} and
\ref{sec:case-study-gas} explicitly show the
applicability of our method to bilevel problems with
nonconvex \rev{and quadratic} lower-level problems
(Section~\ref{sec:application-bilevel})
and to problems on gas transport networks that are subject to
differential equations (Section~\ref{sec:case-study-gas}).
Both are classes of problems that received a lot of attention
during the last years; see, \eg,
\cite{Beck_et_al:2022,Kleinert_et_al:2021c,Dempe:2020} and
\cite{Koch_et_al:2015,Pfetsch_et_al:2015}.
Before we are able to tackle these problems, we first need to formally
state the problem class under consideration, which is what we do in
Section~\ref{sec:problem-definition}.
Afterward, in Section~\ref{sec:algorithm}, we describe the main
rationale of the method, present it formally, and analyze it
theoretically.
The latter leads to a correctness theorem showing that the method
finitely terminates \rev{with an approximate solution} and we further
derive a worst-case iteration bound.

\rev{The main contributions of our work are the following.
  We develop an algorithm that only requires very weak assumptions for
  being applied.
  Hence, the method can be applied to a very large set of problems
  that cannot be solved with other classic MINLP solvers, which require
  that all constraints of the problem are given in closed form.
  To illustrate the generality of the method, we present the
  application to nonlinear gas network optimization as well as bilevel
  problems with nonconvex quadratic lower-level problems.
  Our work clearly needs to be seen as a generalization of the works
  \cite{Schmidt_et_al:2019,Schmidt_et_al:2021b}.
  In particular, \rev{w}e generalize \cite{Schmidt_et_al:2019} to the
  multidimensional case for which we present a more effective numerical
  scheme compared to \cite{Schmidt_et_al:2021b} that uses different
  geometries for the outer approximation as compared to those used in
  \cite{Schmidt_et_al:2019}.}
Since our main workhorse is the Lipschitz continuity of the
nonlinearities, we are still in line with the works
\cite{
  Pinter1996,%
  Pinter1986c,%
  HorstTuy1988,%
  Horst1987,%
  Horst1988,%
  Pinter1988,%
  Tuy1988},
to name only a few.
For a more detailed overview about this field see the textbook
\cite{Horst1996} and the references therein as well as
\cite{Schmidt_et_al:2019,Schmidt_et_al:2021b}, where we discussed the
positioning of the method in the literature in more detail.


\section{Problem Definition}
\label{sec:problem-definition}

We consider the problem
\begin{subequations}
  \label{eq:mult-problem}
  \begin{align}
  \label{eq:mult-problem-obj}
  \min_{x} \quad & c^\top x\\
  \label{eq:mult-problem-lin-and-vars}
  \st \quad
  & Ax \geq b, \quad \lb{x} \leq x \leq \ub{x}, \quad x \in \R^{n}
  \times \Z^{m},\\
  \label{eq:mult-problem-nonlin}
  & f_i(x_{\multind}) = x_{\rind}, \quad i \in [p],
  \end{align}
\end{subequations}
where $c \in \R^{n+m}$, $A \in \R^{q \times (n+m)}$, and $b \in
\R^q$ are given data, $\lb{x} \in \R^n \times \Z^m$ and $\ub{x} \in
\R^n \times \Z^m$ are finite bounds, and $[p] \define \set{1, \dotsc,
  p}$.
Hence, we consider a linear objective~\eqref{eq:mult-problem-obj},
linear mixed-integer constraints~\eqref{eq:mult-problem-lin-and-vars},
and nonlinear constraints defined by the functions $f_i: \R^{l_i} \to \R$.
All $f_i$, $i \in [p]$, are Lipschitz continuous functions and $l_i =
\card{\multind}$ is the number of their arguments.
Moreover, $\multind \subset [n]$ is the index set of
the variables on which the function~$f_i$ depends.
Without loss of generality, we further assume that $\rind \in [n]$
with $\rind \notin \multind$ for all $i \in [p]$.
In what follows, we also write $x_{\allindices} = (x_{\multind},
x_{\rind}) \in \R^{l_i + 1}$.\footnote{Note that we omit
  transpositions here and in what follows for the ease of better
  reading.}

The main challenge when solving Problem~\eqref{eq:mult-problem} is
that we assume that the nonlinear functions~$f_i$ are not given in
closed form but that we can only evaluate them and that we know their
Lipschitz constants.

The $\myeps$-relaxed version of the original problem
\eqref{eq:mult-problem}
is given by
\begin{subequations}
  \label{eq:mult-problem-rela}
  \begin{align}
  \label{eq:mult-problem-rela-obj}
  \min_{x} \quad & c^\top x\\
  \label{eq:mult-problem-rela-lin-and-vars}
  \st \quad
  & Ax \geq b, \quad \lb{x} \leq x \leq \ub{x}, \quad x \in \R^{n}
  \times \Z^{m},\\
  \label{eq:mult-problem-rela-nonlin}
  & \abs{f_i(x_{\multind}) - x_{\rind}} \leq \varepsilon, \quad i \in [p],
  \end{align}
\end{subequations}
where $\varepsilon > 0$ is a prescribed tolerance.
A feasible point of \eqref{eq:mult-problem-rela} is called an
$\varepsilon$-feasible point of Problem \eqref{eq:mult-problem}.
\rev{Moreover, we call a global solution of \eqref{eq:mult-problem-rela} an
  approximate global optimal solution in what follows.}


\section{The Algorithm}
\label{sec:algorithm}

In this section, we introduce an iterative procedure to solve Problem
\eqref{eq:mult-problem} to approximate global optimality.
The main idea is to relax all nonlinearities by utilizing the Lipschitz
continuity of these functions.
In each iteration, the relaxed problem, which we will call the master
problem, needs to be solved to global optimality.
Subsequently, a subproblem is solved to tighten the
relaxation for the next master problem.
This procedure is then repeated until an $\myeps$-feasible
solution is found or until it can be shown that the original problem
is infeasible.

The master problem  in iteration $k$ reads
\begin{equation*}
  \label{eq:mult-problem-master}
  \tag{M($k$)}
  \begin{aligned}
    \min_{x} \quad & c^\top x\\
    \st \quad
    & Ax \geq b, \quad \lb{x} \leq x \leq \ub{x}, \quad x \in \R^{n}
    \times \Z^{m},\\
    & x_{\allindices} \in \Omega_i^k, \quad i \in [p],
  \end{aligned}
\end{equation*}
where $\Omega_i^k$ is a relaxation of the graph of the nonlinearity
$f_i$.
This relaxation will be stated in terms of mixed-integer linear
constraints; see below.
The idea behind is to partition the domain of $f_i$ into a set of boxes
that are indexed using indices in $J_i^k = \set{1,\dots, \abs{J_i^k}}$
and to linearly relax the graph over each box with the \rev{region}
$\Omega_i^k(j)$, $j \in J_i^k$, using the Lipschitz continuity
of~$f_i$.
Hence, we obtain
\begin{equation}
  \label{eq:omega-def-as-union}
  \Omega_i^k = \bigcup_{j \in J_i^k} \Omega_i^k(j).
\end{equation}
After solving the master problem, the boxes that contain the
solution~$x^k$ are identified and split in smaller boxes to get a
finer relaxation for the next iteration.
The main purpose of solving the subproblem afterward is to find
good splitting points for these boxes.
To this end, the subproblem determines a point of the graph of
each nonlinearity and, at the same time, tries to minimize
the distance to the solution of the last master problem.
Hence, the subproblem is given by
\begin{equation}
  \label{eq:mult-problem-sub}
  \tag{S($k$)}
  \min_{\tilde{x}} \quad \norm{\tilde{x} - x^k}_2^2
  \quad \st \quad f_i(\tilde{x}_{\multind}) = \tilde{x}_{\rind},
  \quad \tilde{x}_{\allind} \in \tilde{\Omega}_i^k(j_i^k), \quad i \in [p],
\end{equation}
where $j_i^k \in J_i^k$ denotes the box with $x_{\allindices}^k \in
\Omega_i^k(j_i^k)$
for all $i \in [p]$ and $\tilde{\Omega}_i^k(j_i^k)$ is a suitably
chosen sub\rev{region} of $\Omega_i^k(j_i^k)$.
The reason for the use of these sub\rev{regions} will be discussed in detail in
Section~\ref{sec:constr-subprobl}.

Figure~\ref{fig:master-and-subproblem} depicts the
subproblem~\eqref{eq:mult-problem-sub} with the \rev{regions}
$\tilde{\Omega}_i^k(j_i^k)$ and $\Omega_i^k(j_i^k)$ (left) and the
corresponding \rev{regions} $\Omega_i^{k+1}(j)$ and $\Omega_i^{k+1}(j+1)$ of
the master problem~\eqref{eq:mult-problem-master} in the next
iteration (right).

\begin{figure}
  \centering
  \begin{minipage}{.49\textwidth}
    \begin{tikzpicture}
  \def \thickness {thick}
  \def \ylabelshift {25}
  \def \lipschitzconstant {1}
  \def \xoffset {0.5}
  \def \xmax {1.5*pi}
  \def \xmid {\xmax/2}
  \def \ymid {\myfunc{\xmid}}
  \def \ymin {\ymid - \lipschitzconstant * (\xmid + \xoffset)}
  \def \ymax {\ymid + \lipschitzconstant * (\xmid + \xoffset)}
  \def \xmasterone {0.375*pi}
  \def \ymasterone {\ymid + 1 * \lipschitzconstant * \xmid}
  \def \xsplitone {1.44271}
  \def \xmidleftone {\xsplitone/2}
  \def \xmidrightone {(\xsplitone + \xmax)/2}
  \begin{axis}[
    axis line style = \thickness,
    axis y line=left,
    axis x line=bottom,
    xtick=\empty,
    ytick=\empty,
    x label style={yshift=12.5},
    y label style={yshift=-\ylabelshift},
    ymax=\ymax,
    ymin=\ymin,
    xlabel={$x_{\multind}$},
    ylabel={$x_{\rind}$},
    width=\textwidth+\ylabelshift,
    axis on top,
    clip=false
    ]
    \filllipschitzbox{\xmid}{\xmax}{black!12}{dashed}
    \filllipschitzbox{\xmid}{2 * \xmax / 3}{black!25}{draw=none}
    \addplot[black] coordinates {({\xmax+\xoffset}, {\ymid + \lipschitzconstant * (\xmax / 2)})} node[below left, xshift=1, yshift=1] {$\Omega_i^k(j_i^k)$};
    \addplot[black] coordinates {({\xmid+(\xmax / 3)+\xoffset}, {\ymid + \lipschitzconstant * (\xmax / 3)})} node[below left, xshift=1, yshift=1] {$\tilde{\Omega}_i^k(j_i^k)$};
    \addplot[smooth,black,domain=0:\xmax+2*\xoffset,samples=100, \thickness]{\myfunc{x-\xoffset}} node[above,pos=1] {$f_i$};
    \addplot[black, mark=*] coordinates {({(\xmasterone)+\xoffset}, {\ymasterone})} node[below left, pos=1] {$x^k$};
    \addplot[black, mark=*] coordinates {({(\xsplitone)+\xoffset}, {\myfunc{\xsplitone}})} node[below,pos=1] {$\tilde{x}^k$};
    \addplot[black, dashed, \thickness] coordinates {({\xmasterone+\xoffset}, {\ymasterone}) ({\xsplitone+\xoffset}, {\myfunc{\xsplitone}})};
  \end{axis}
\end{tikzpicture}%
%
  \end{minipage}
  \begin{minipage}{.49\textwidth}
    \begin{tikzpicture}
  \def \thickness {thick}
  \def \ylabelshift {25}
  \def \lipschitzconstant {1}
  \def \xoffset {0.5}
  \def \xmax {1.5*pi}
  \def \xmid {\xmax/2}
  \def \ymid {\myfunc{\xmid}}
  \def \ymin {\ymid - \lipschitzconstant * (\xmid + \xoffset)}
  \def \ymax {\ymid + \lipschitzconstant * (\xmid + \xoffset)}
  \def \xmasterone {0.375*pi}
  \def \ymasterone {\ymid + 1 * \lipschitzconstant * \xmid}
  \def \xsplitone {1.44271}
  \def \xmidleftone {\xsplitone/2}
  \def \xmidrightone {(\xsplitone + \xmax)/2}
  \begin{axis}[
    axis line style = \thickness,
    axis y line=left,
    axis x line=bottom,
    xtick=\empty,
    ytick=\empty,
    x label style={yshift=12.5},
    y label style={yshift=-\ylabelshift},
    ymax=\ymax,
    ymin=\ymin,
    xlabel={$x_{\multind}$},
    ylabel={$x_{\rind}$},
    width=\textwidth+\ylabelshift,
    axis on top,
    clip=false
    ]
    \filllipschitzbox{\xmidleftone}{\xsplitone}{black!12}{dashed}
    \filllipschitzbox{\xmidrightone}{\xmax - \xsplitone}{black!12}{dashed}
    \addplot[black] coordinates {({\xoffset}, {\myfunc{\xmidleftone} + \lipschitzconstant * \xmidleftone})} node[above right, xshift=-1, yshift=-1] {$\Omega_i^{k+1}(j)$};
    \addplot[black] coordinates {({\xmax+\xoffset}, {\myfunc{\xmidrightone} + \lipschitzconstant * (\xmax - \xmidrightone)})} node[below left, xshift=1, yshift=1] {$\Omega_i^{k+1}(j+1)$};
    \addplot[smooth,black,domain=0:\xmax+2*\xoffset,samples=100, \thickness]{\myfunc{x-\xoffset}} node[above,pos=1] {$f_i$};
  \end{axis}
\end{tikzpicture}%
%
  \end{minipage}
  \caption{Visualization of the subproblem~\eqref{eq:mult-problem-sub}
    (left) and of the feasible \rev{region} of the master
    problem~\eqref{eq:mult-problem-master} in the next iteration
    (right) for a nonlinear function $f_i:\R \to \R$.}
  \label{fig:master-and-subproblem}
\end{figure}
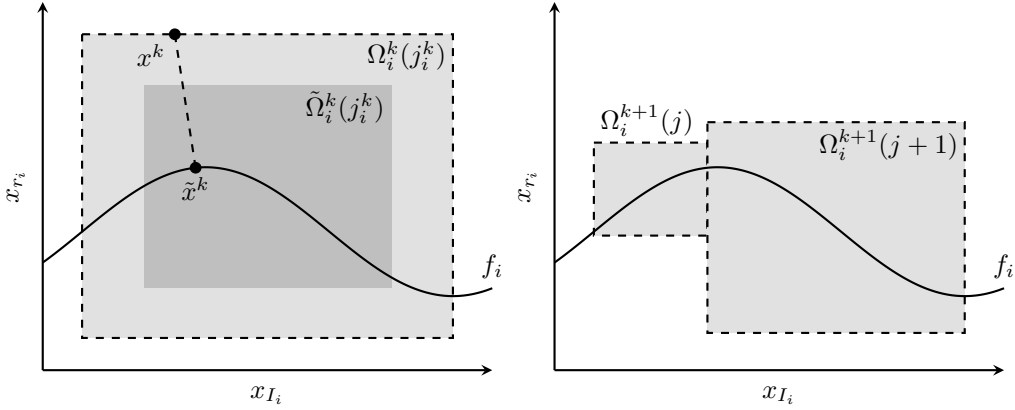

\subsection{Construction of the Master Problem's Feasible \rev{Region}}
\label{sec:constr-mast-probl}

We now describe in detail how we construct the
relaxations~$\Omega_i^k$.
First, we define the box
\begin{equation*}
  B(\lb{v}, \ub{v}) \define \defset{x \in \R^{d}}{\lb{v} \leq
  x \leq \ub{v}}
\end{equation*}
for $\lb{v}, \ub{v} \in \R^{d}$, $\lb{v} \leq \ub{v}$, and arbitrary
dimension~$d$.

For each $i \in [p]$, we assume that we are given vectors
$\lb{v}_i^k(j), \ub{v}_i^k(j) \in \R^{l_i}$ for $j \in J_i^k$ such
that the boxes $B(\lb{v}_i^k(j), \ub{v}_i^k(j))$ have pairwise disjoint
interiors and cover the bounding box of $x_{\multind}$, \ie, we have
\begin{equation}
  \label{eq:box-cover-bounds}
  B\left(\lb{x}_{\multind}, \ub{x}_{\multind}\right)
  = \bigcup_{j \in J_i^k}
  B\left(\lb{v}_i^k(j), \ub{v}_i^k(j)\right).
\end{equation}

Let $L_i$ be the Lipschitz constant of $f_i$ on $B(\lb{x}_{\multind},
\ub{x}_{\multind}) \subset \R^{l_i}$ \wrt\ a given
norm~$\norm{\cdot}$, where any (weighted) norm in $\R^{l_i}$ can
be used.
Let $m_i^k(j)$ be the center point of the box $B(\lb{v}_i^k(j),
\ub{v}_i^k(j))$, \ie,
\begin{equation*}
  m_i^k(j) = \frac{1}{2} \left(\lb{v}_i^k(j) + \ub{v}_i^k(j) \right)
\end{equation*}
holds.
Due to the Lipschitz continuity of $f_i$, we have
\begin{align*}
  x_{\rind}
  & \leq f_i(m_i^k(j)) + L_i \norm{x_{\multind} - m_i^k(j)},
  \\
  x_{\rind}
  & \geq f_i(m_i^k(j)) - L_i \norm{x_{\multind} - m_i^k(j)}
\end{align*}
for $x_{\multind} \in B(\lb{v}_i^k(j), \ub{v}_i^k(j))$.
Since $\norm{x_{\multind} - m_i^k(j)}$ attains its maximum over
$B(\lb{v}_i^k(j), \ub{v}_i^k(j))$ in the vertices of the box,
we can replace $x_{\multind}$ with $\ub{v}_i^k(j)$.
It thus holds
\begin{subequations}
  \label{eq:box-bounds-rhs}
  \begin{align}
    x_{\rind}
    & \leq f_i(m_i^k(j)) + L_i \norm{\ub{v}_i^k(j) - m_i^k(j)}
      = f_i(m_i^k(j)) + \frac{L_i}{2} \norm{\ub{v}_i^k(j) -
      \lb{v}_i^k(j)},
    \\
    x_{\rind}
    & \geq f_i(m_i^k(j)) - L_i \norm{\ub{v}_i^k(j) - m_i^k(j)}
      = f_i(m_i^k(j)) - \frac{L_i}{2} \norm{\ub{v}_i^k(j) -
      \lb{v}_i^k(j)}
  \end{align}
\end{subequations}
for $x_{\multind} \in B(\lb{v}_i^k(j), \ub{v}_i^k(j))$.
With this we can define the \rev{region}
$\Omega_i^k(j)$ for $j \in J_i^k$ as the box
\begin{equation*}
  \begin{aligned}
    \Omega_i^k(j) \define \Big\{(x_{\multind}, x_{\rind}) \in \R^{l_i + 1}
    \defsep
    & \lb{v}_i^k(j) \leq x_{\multind} \leq \ub{v}_i^k(j),\\
    & x_{\rind} \leq f_i(m_i^k(j))
    + \frac{1}{2}L_i \norm{\ub{v}_i^k(j) - \lb{v}_i^k(j)},\\
    & x_{\rind} \geq f_i(m_i^k(j))
    - \frac{1}{2}L_i \norm{\ub{v}_i^k(j) - \lb{v}_i^k(j)}\Big\}.
  \end{aligned}
\end{equation*}

The next step is to define the \rev{region} $\Omega_i^k$ as the union of all
$\Omega_i^k(j)$; see \eqref{eq:omega-def-as-union}.
Using \eqref{eq:box-cover-bounds}, we obtain a covering of the
bounding box of $x_{\multind}$ and via~\eqref{eq:box-bounds-rhs}, we
obtain bounds for~$x_{\rind}$.
Hence, it follows that $\Omega_i^k$ is a relaxation of the graph of
the function~$f_i$.
\begin{proposition}
  \label{thm:graph_in_omega}
  It holds $\graph(f_i) \subseteq \Omega_i^k$ on $B(\lb{x}_{\multind},
  \ub{x}_{\multind})$ for all $i \in [p]$ and all~$k$.
\end{proposition}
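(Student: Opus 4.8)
The plan is to reduce the claim to the pointwise Lipschitz estimates already established in~\eqref{eq:box-bounds-rhs} together with the covering property~\eqref{eq:box-cover-bounds}. Fix an index $i \in [p]$ and an iteration~$k$, and let $(x_{\multind}, x_{\rind})$ be an arbitrary point of $\graph(f_i)$ over the bounding box, so that $x_{\multind} \in B(\lb{x}_{\multind}, \ub{x}_{\multind})$ and $x_{\rind} = f_i(x_{\multind})$. Since $\Omega_i^k$ is by definition the union of the regions $\Omega_i^k(j)$, $j \in J_i^k$, see~\eqref{eq:omega-def-as-union}, it suffices to exhibit a single index $j \in J_i^k$ with $(x_{\multind}, x_{\rind}) \in \Omega_i^k(j)$.

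First I would invoke the covering condition~\eqref{eq:box-cover-bounds}: because the boxes $B(\lb{v}_i^k(j), \ub{v}_i^k(j))$, $j \in J_i^k$, cover $B(\lb{x}_{\multind}, \ub{x}_{\multind})$, there is some $j \in J_i^k$ with $x_{\multind} \in B(\lb{v}_i^k(j), \ub{v}_i^k(j))$, that is, $\lb{v}_i^k(j) \leq x_{\multind} \leq \ub{v}_i^k(j)$. This is exactly the first defining inequality of $\Omega_i^k(j)$. It then remains to verify the two bounds on $x_{\rind} = f_i(x_{\multind})$. Here I would apply the Lipschitz continuity of $f_i$ on the box to obtain $\abs{f_i(x_{\multind}) - f_i(m_i^k(j))} \leq L_i \norm{x_{\multind} - m_i^k(j)}$, and then use that $\norm{x_{\multind} - m_i^k(j)}$ attains its maximum over the box at a vertex, whence it is bounded by $\tfrac{1}{2}\norm{\ub{v}_i^k(j) - \lb{v}_i^k(j)}$; this is precisely the estimate summarized in~\eqref{eq:box-bounds-rhs}. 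Substituting $x_{\rind}$ yields the two remaining inequalities in the definition of $\Omega_i^k(j)$, so that $(x_{\multind}, x_{\rind}) \in \Omega_i^k(j) \subseteq \Omega_i^k$. As the point and the indices $i, k$ were arbitrary, the claim follows.

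The argument is essentially bookkeeping: all the analytic content---the Lipschitz bound and the fact that the distance to the center is maximized at a vertex---has already been carried out in the lines preceding~\eqref{eq:box-bounds-rhs}, so the proof mainly assembles these pieces. The one place that deserves care, and which I would expect to be the only genuine subtlety, is making sure the vertex-maximum estimate is valid for the general (weighted) norm admitted in the construction; this holds because $x_{\multind} \mapsto \norm{x_{\multind} - m_i^k(j)}$ is convex and hence attains its maximum over the box at an extreme point, and I would flag this as the single step in which the general-norm assumption is actually used.
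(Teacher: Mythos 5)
Your proof is correct and follows essentially the same route as the paper, which states the proposition without a separate proof precisely because it is the assembly of the covering property~\eqref{eq:box-cover-bounds} and the Lipschitz estimates~\eqref{eq:box-bounds-rhs} that you carry out. Your additional remark that the vertex-maximum step relies on convexity of $x_{\multind} \mapsto \norm{x_{\multind} - m_i^k(j)}$ (together with the weighted norms being invariant under coordinate sign flips, so that all vertices give the common value $\tfrac{1}{2}\norm{\ub{v}_i^k(j)-\lb{v}_i^k(j)}$) is a valid and welcome clarification of a point the paper glosses over.
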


For what follows, we abbreviate
\begin{equation*}
\boxset_i^k \define \Set{B(\lb{v}_i^k(1), \ub{v}_i^k(1)), \dots,
  B(\lb{v}_i^k(\abs{J_i^k}), \ub{v}_i^k(\abs{J_i^k}))},
\end{equation*}
\ie, $\boxset_i^k$ is the set of boxes that is used to define
$\Omega_i^k$.
In our algorithm, we use $\Omega_i^k$ to replace the nonlinear
constraints~$f_i(x_{\multind}) = x_{\rind}$ for all $i \in [p]$ in
Problem~\eqref{eq:mult-problem} in order to obtain a relaxation.

\begin{lemma}
  The master problem \eqref{eq:mult-problem-master} can be modeled as
  mixed-integer linear problem.
\end{lemma}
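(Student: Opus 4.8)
The plan is to observe that every ingredient of~\eqref{eq:mult-problem-master} is already mixed-integer linear except for the membership constraints $x_{\allind} \in \Omega_i^k$, and then to model these by a big-$M$ disjunctive reformulation with binary selection variables. Indeed, the objective $c^\top x$, the constraints $Ax \geq b$, the bounds $\lb{x} \leq x \leq \ub{x}$, and the integrality $x \in \R^n \times \Z^m$ are all linear (respectively mixed-integer linear), so it suffices to rewrite $x_{\allind} \in \Omega_i^k$ in mixed-integer linear form for each $i \in [p]$.

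First I would record that, by~\eqref{eq:omega-def-as-union}, we have $\Omega_i^k = \bigcup_{j \in J_i^k} \Omega_i^k(j)$, where each $\Omega_i^k(j)$ is a box, that is, a polytope described by finitely many linear inequalities in $(x_{\multind}, x_{\rind})$. The bounds $\lb{v}_i^k(j) \leq x_{\multind} \leq \ub{v}_i^k(j)$ are linear, and the two bounds on $x_{\rind}$ are linear with \emph{constant} right-hand sides: the Lipschitz constant $L_i$ and the box data are given, and $f_i(m_i^k(j))$ is available through a single evaluation of $f_i$ at the known center $m_i^k(j)$. Writing $\ub{w}_i^k(j) \define f_i(m_i^k(j)) + \tfrac{1}{2} L_i \norm{\ub{v}_i^k(j) - \lb{v}_i^k(j)}$ and $\lb{w}_i^k(j) \define f_i(m_i^k(j)) - \tfrac{1}{2} L_i \norm{\ub{v}_i^k(j) - \lb{v}_i^k(j)}$, the constraint $x_{\allind} \in \Omega_i^k$ is thus a disjunction over the finite set $J_i^k$ of membership in one of these polytopes.

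Next I would encode the disjunction. For each $i \in [p]$ and $j \in J_i^k$ introduce a binary variable $z_{i,j} \in \set{0,1}$ together with the assignment constraint $\sum_{j \in J_i^k} z_{i,j} = 1$, so that exactly one box is selected. For the selected box its defining inequalities are enforced, while for all others they are made redundant through big-$M$ terms:
\begin{align*}
  \lb{v}_i^k(j) - M(1 - z_{i,j}) &\leq x_{\multind} \leq \ub{v}_i^k(j) + M(1 - z_{i,j}),\\
  \lb{w}_i^k(j) - M(1 - z_{i,j}) &\leq x_{\rind} \leq \ub{w}_i^k(j) + M(1 - z_{i,j}),
\end{align*}
for all $j \in J_i^k$, where the inequalities for $x_{\multind}$ are understood component-wise. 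If $z_{i,j} = 1$, these constraints coincide with membership in $\Omega_i^k(j)$; if $z_{i,j} = 0$, a sufficiently large $M$ makes them vacuous. A finite such $M$ exists because all involved quantities are bounded: the variables $x_{\multind}, x_{\rind}$ lie within the finite bounds $\lb{x} \leq x \leq \ub{x}$, and the data $\lb{v}_i^k(j), \ub{v}_i^k(j), \lb{w}_i^k(j), \ub{w}_i^k(j)$ are finite. Adjoining these constraints and variables to the already mixed-integer linear parts of~\eqref{eq:mult-problem-master} produces a mixed-integer linear problem.

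The main point to verify is the \emph{exactness} of this reformulation, i.e., that projecting out the binaries $z_{i,j}$ recovers precisely $x_{\allind} \in \bigcup_{j \in J_i^k} \Omega_i^k(j)$ and nothing more; this is where the argument must be careful rather than merely formal. Exactness follows from the finiteness of the variable bounds, which guarantees that one may choose $M$ larger than the maximal spread of $x$ over $\lb{x} \leq x \leq \ub{x}$ and than the largest right-hand side occurring, so that a deactivated box imposes no restriction while an activated box reproduces $\Omega_i^k(j)$ exactly. Alternatively, one could use the disaggregated convex-hull (Balas) formulation to avoid big-$M$ constants entirely, but the big-$M$ version already suffices to establish the claim.
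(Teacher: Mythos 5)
Your proposal is correct and follows essentially the same route as the paper: a big-$M$ disjunctive reformulation with one binary selection variable per box, a sum-to-one constraint, and the observation that the finite variable bounds guarantee a valid finite $M$. The additional remarks on exactness and the Balas alternative are fine but not needed beyond what the paper's own proof contains.
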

\begin{proof}
  We can write \eqref{eq:mult-problem-master} using the following
  Big-$M$ formulation:
  \begin{subequations}
    \label{eq:mult-problem-master-mip}
    \begin{align}
      \label{eq:mult-problem-master-mip-obj}
      \min_{x, z} \quad & c^\top x\\
      \label{eq:mult-problem-master-mip-lin-and-vars}
      \st \quad
      & Ax \geq b, \quad \lb{x} \leq x \leq \ub{x}, \quad x \in \R^{n}
      \times \Z^{m},\\
      \label{eq:mult-problem-master-mip-polytope-bigm-lower}
      & x_{\multind} \geq \lb{v}_{\multind}^k(j) - M (1 - z_i^k(j)),
      & i \in [p], \ j \in J_i^k,\\
      \label{eq:mult-problem-master-mip-polytope-bigm-upper}
      & x_{\multind} \leq \ub{v}_{\multind}^k(j) + M (1 - z_i^k(j)),
      & i \in [p], \ j \in J_i^k,\\
      \label{eq:mult-problem-master-mip-lipschitz-leq}
      & x_{\rind} \leq f_i(m_i^k(j))
      + \frac{1}{2}L_i \norm{\ub{v}_i^k(j) - \lb{v}_i^k(j)}
      + M (1 - z_i^k(j)),
      & i \in [p], \ j \in J_i^k,\\
      \label{eq:mult-problem-master-mip-lipschitz-geq}
      & x_{\rind} \geq f_i(m_i^k(j))
      - \frac{1}{2}L_i \norm{\ub{v}_i^k(j) - \lb{v}_i^k(j)}
      - M (1 - z_i^k(j)),
      & i \in [p], \ j \in J_i^k,\\
      \label{eq:mult-problem-master-mip-bin-var-sum}
      & \sum_{j \in J_i^k} z_i^k(j) = 1, & i \in [p],\\
      \label{eq:mult-problem-master-mip-bin-var}
      & z_i^k(j) \in \set{0, 1}, & i \in [p], \ j \in J_i^k.
    \end{align}
  \end{subequations}
  The rationale of this model is as follows.
  For each nonlinearity~$i \in [p]$ and each box $j \in J_i^k$, we
  introduce a binary variable~$z_i^k(j)$ that indicates whether the
  solution lies in this box or not.
  If $z_i^k(j) = 1$, the
  constraints~\eqref{eq:mult-problem-master-mip-polytope-bigm-lower}--%
  \eqref{eq:mult-problem-master-mip-lipschitz-geq}
  are equivalent to the definition of $\Omega_i^k(j)$.
  If $z_i^k(j) = 0$, then
  \eqref{eq:mult-problem-master-mip-polytope-bigm-lower}--%
  \eqref{eq:mult-problem-master-mip-lipschitz-geq}
  are always fulfilled if the constant $M$ is chosen sufficiently
  large.
  constraint~\eqref{eq:mult-problem-master-mip-bin-var-sum} finally
  ensures that for each nonlinearity~$i \in [p]$ exactly one box~$j
  \in J_i^k$ is chosen.
\end{proof}

Let us remark that it is always possible in our setting to obtain
finite and sufficiently large values~$M$ by using the finite bounds on
the variables in~\eqref{eq:mult-problem-lin-and-vars}.

\subsection{Construction of the Subproblem}
\label{sec:constr-subprobl}

We now introduce the chosen sub\rev{region} of $\Omega_i^k(j)$ used in the
subproblem
\eqref{eq:mult-problem-sub}.
We define this \rev{region} as
\begin{equation*}
  \tilde{\Omega}_i^k(j) = \Omega_i^k(j) \cap \hat{\Omega}_i^k(j),
\end{equation*}
using the further sub\rev{region}
\begin{equation*}
  \hat{\Omega}_i^k(j) = \defset{(x_{\multind}, x_{\rind}) \in \R^{l_i + 1}}
  {(1 - \lambda) \lb{v}_i^k(j) + \lambda \ub{v}_i^k(j) \leq x_{\multind}
  \leq \lambda \lb{v}_i^k(j) + (1 - \lambda) \ub{v}_i^k(j)},
\end{equation*}
for some $\lambda \in (0, 1/2]$.
This ensures that the solution of the subproblem \eqref{eq:mult-problem-sub}
cannot be arbitrarily close to the edges of the used box if $\lambda >
0$ is chosen appropriately.

Let us also note that for each iteration~$k \in \N$,
the subproblem~\eqref{eq:mult-problem-sub} can be separated into $p$
smaller problems under reasonable assumptions.
If the index sets $\allindices = (\multind, \rind)$ are
non-overlapping, \ie,
\begin{equation}
  \label{eq:non-overlapping-indices}
  \left(\multind \cup \set{\rind}\right)
  \cap \left(I_j \cup \set{r_j}\right) = \emptyset
  \quad\text{for all} \quad i,j \in [p],\ i \neq j,
\end{equation}
these smaller problems can be solved in parallel.
The above assumption can always be satisfied by introducing additional
auxiliary variables.
\begin{lemma}
  Suppose that the index sets $\allind = (\multind, \rind)$ are
  non-overlapping, \ie, \eqref{eq:non-overlapping-indices} holds.
  Then, the subproblem~\eqref{eq:mult-problem-sub} is completely
  separable, \ie, we can solve the subproblem in iteration~$k$ by
  solving $p$ smaller problems given by
  \begin{equation}
    \label{eq:mult-problem-sub_sepa}
    \min_{\tilde{x}_{\allindices}} \quad
    \norm{\tilde{x}_{\allindices} - x_{\allindices}^k}_2^2
    \quad \st \quad f_i(\tilde{x}_{\multind}) = \tilde{x}_{\rind},
    \quad \tilde{x}_{\allindices} \in \tilde{\Omega}_i^k(j_i^k).
  \end{equation}
\end{lemma}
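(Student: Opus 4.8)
The plan is to exploit two elementary facts: the objective $\norm{\tilde{x} - x^k}_2^2$ is additively separable across the coordinates of $\tilde{x}$, and, under the non-overlapping hypothesis~\eqref{eq:non-overlapping-indices}, the feasible set of~\eqref{eq:mult-problem-sub} is a Cartesian product with respect to the coordinate blocks indexed by $i \in [p]$. Since minimizing a coordinate-wise separable function over a Cartesian product decomposes into independent minimizations over the factors, this yields exactly the $p$ problems stated in~\eqref{eq:mult-problem-sub_sepa}.

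First I would partition the full index set $\set{1, \dots, n+m}$ of coordinates of $\tilde{x}$ into the blocks $\allindices = \multind \cup \set{\rind}$ for $i \in [p]$, together with the set $R$ of remaining indices that occur in no constraint. By~\eqref{eq:non-overlapping-indices} the blocks $\allindices$ are pairwise disjoint, and since $\rind \notin \multind$ each $\allindices$ has $l_i + 1$ elements. Along this partition the objective splits as
\begin{equation*}
  \norm{\tilde{x} - x^k}_2^2
  = \sum_{i \in [p]} \norm{\tilde{x}_{\allindices} - x^k_{\allindices}}_2^2
  + \sum_{j \in R} (\tilde{x}_j - x^k_j)^2 .
\end{equation*}
Next I would observe that the constraints of~\eqref{eq:mult-problem-sub}, namely $f_i(\tilde{x}_{\multind}) = \tilde{x}_{\rind}$ and $\tilde{x}_{\allindices} \in \tilde{\Omega}_i^k(j_i^k)$, restrict only the subvector $\tilde{x}_{\allindices}$ and leave the coordinates in $R$ entirely free. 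Because the blocks are pairwise disjoint, the feasible set of~\eqref{eq:mult-problem-sub} is the Cartesian product of the $p$ per-block feasible sets with the unconstrained coordinates in $R$.

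Minimizing the separable objective over this product, the free summands are each minimized at value zero by $\tilde{x}_j = x^k_j$, while the $i$-th block is minimized independently by~\eqref{eq:mult-problem-sub_sepa}. Concatenating the block optimizers (and the free coordinates) produces a feasible point of~\eqref{eq:mult-problem-sub} whose objective equals the sum of the $p$ block optima; conversely, the restriction of any feasible point of~\eqref{eq:mult-problem-sub} to each block is feasible for the corresponding block problem, so the optimal values coincide. This establishes that the subproblem in iteration~$k$ can be solved by solving the $p$ problems~\eqref{eq:mult-problem-sub_sepa}.

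The argument is essentially bookkeeping, so I do not expect a genuine obstacle; the single point that must be handled with care is the exact role of~\eqref{eq:non-overlapping-indices}. It is precisely this hypothesis that guarantees no variable $\tilde{x}_j$ is shared between two constraint blocks, and hence that the feasible set is a true Cartesian product; without it, a coupling variable would make the objective contributions of different blocks interdependent and destroy separability. I would therefore point out explicitly where the disjointness of the $\allindices$ (and the freeness of the coordinates in $R$) enters, and note, as already remarked before the lemma, that any overlap can be removed beforehand by introducing auxiliary copies of the shared variables.
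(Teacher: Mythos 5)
Your proof is correct and follows essentially the same route as the paper: the objective splits additively over the disjoint blocks $\allindices$ and the constraints decouple along $i \in [p]$, so the subproblem decomposes into the $p$ problems~\eqref{eq:mult-problem-sub_sepa}. Your explicit treatment of the remaining unconstrained coordinates (your set $R$) is a small point of extra care that the paper's one-line decomposition of the norm glosses over, but it does not change the argument.
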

\begin{proof}
  The constraints of \eqref{eq:mult-problem-sub}, \ie,
  \begin{equation*}
    f_i(\tilde{x}_{\multind}) = \tilde{x}_{\rind},
    \quad \tilde{x}_{\allindices} \in \tilde{\Omega}_i^k(j_i^k),
    \quad i \in [p],
  \end{equation*}
  completely decouple along $i \in [p]$ and so does the objective
  function
  \begin{equation*}
    \norm{\tilde{x} - x^k}_2^2
    = \sum_{i \in [p]} \norm{\tilde{x}_{\allindices} -
      x_{\allindices}^k}_2^2.
  \end{equation*}
  Therefore, the solution of the
  subproblem~\eqref{eq:mult-problem-sub} can be obtained by
  solving Problem~\eqref{eq:mult-problem-sub_sepa} for all $i \in
  [p]$.
\end{proof}

\rev{Note that, in the formal sense of complexity theory, the
  subproblem can be as hard to solve as the originally given MINLP.
  However, the split into multiple and thus much smaller subproblems
  can make a huge difference in practice.
  Moreover, we completely split the mixed-integer aspects from the
  nonlinear aspects of the problem, which can also help a lot in
  solving the subproblems although they are still hard in the formal
  sense.}

\subsection{Formal Statement of the Algorithm}
\label{sec:formal-algorithm-statement}

Before we can formally introduce the algorithm,
we need the following notation.
Let $B(\lb{v}, \ub{v}) \subseteq \R^{d}$ be a box with an interior
point $x \in \interior B(\lb{v}, \ub{v})$, \ie, $\lb{v} < x <
\ub{v}$.
The point $x$ splits the box into a set of boxes that we define as
\begin{equation*}
  \splitbox(\lb{v}, \ub{v}, x) \define \defset{B(\lb{w}, \ub{w})}
  {(\lb{w}_\ell = \lb{v}_\ell \land \ub{w}_\ell = x_\ell)
    \lor (\lb{w}_\ell = x_\ell \land \ub{w}_\ell = \ub{v}_\ell)
    \text{ for all } \ell \in [d]}.
\end{equation*}

We can utilize this notation to get a finer relaxation of
$\graph(f)$ by splitting an element of $\boxset_i^k$ using the
solution of the subproblem~\eqref{eq:mult-problem-sub} as the
splitting point.
This yields a set of smaller boxes that still fulfills
Condition~\eqref{eq:box-cover-bounds} as the following proposition
states.
\begin{proposition}
  For a given box~$B(\lb{v}, \ub{v}) \subset \R^{l_i}$ and an interior
  point~$x \in \interior B(\lb{v}, \ub{v})$, the
  set~$\splitbox(\lb{v}, \ub{v}, x)$ contains $2^{l_i}$
  smaller boxes that have pairwise disjoint interiors and that
  completely cover the box $B(\lb{v}, \ub{v})$, \ie,
  \begin{equation*}
    B(\lb{v}, \ub{v})
    = \bigcup_{b \in \splitbox(\lb{v}, \ub{v}, x)} b.
  \end{equation*}
\end{proposition}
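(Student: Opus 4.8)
The plan is to index the elements of $\splitbox(\lb{v}, \ub{v}, x)$ by binary vectors. The defining condition makes an independent choice in each coordinate $\ell \in [l_i]$ between the \emph{lower} interval $[\lb{v}_\ell, x_\ell]$ (taking $\lb{w}_\ell = \lb{v}_\ell$, $\ub{w}_\ell = x_\ell$) and the \emph{upper} interval $[x_\ell, \ub{v}_\ell]$ (taking $\lb{w}_\ell = x_\ell$, $\ub{w}_\ell = \ub{v}_\ell$). Thus each box corresponds to a sign vector $s \in \set{0,1}^{l_i}$, where $s_\ell = 0$ selects the lower and $s_\ell = 1$ the upper interval, and conversely every such $s$ yields a box $B_s \in \splitbox(\lb{v}, \ub{v}, x)$. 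The three claims --- cardinality $2^{l_i}$, pairwise disjoint interiors, and covering --- will each be read off from this coordinate-wise description.

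For the count, I would use that $x$ is an interior point, so $\lb{v}_\ell < x_\ell < \ub{v}_\ell$ for every $\ell$. Hence in each coordinate the two intervals $[\lb{v}_\ell, x_\ell]$ and $[x_\ell, \ub{v}_\ell]$ are nondegenerate and have distinct endpoint pairs, so the map $s \mapsto B_s$ is injective: two sign vectors differing in some coordinate $\ell$ produce boxes whose $\ell$-th defining intervals differ. Since there are $2^{l_i}$ sign vectors, $\splitbox(\lb{v}, \ub{v}, x)$ contains exactly $2^{l_i}$ distinct boxes.

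Covering is a coordinatewise set identity. Since $[\lb{v}_\ell, x_\ell] \cup [x_\ell, \ub{v}_\ell] = [\lb{v}_\ell, \ub{v}_\ell]$ for every $\ell$, any $y \in B(\lb{v}, \ub{v})$ satisfies, in each coordinate, either $y_\ell \leq x_\ell$ or $y_\ell \geq x_\ell$; choosing $s_\ell$ accordingly places $y \in B_s$, so $B(\lb{v}, \ub{v}) \subseteq \bigcup_s B_s$. The reverse inclusion is immediate because each $B_s$ is a product of subintervals of $[\lb{v}_\ell, \ub{v}_\ell]$. For disjoint interiors, take two distinct boxes $B_s \neq B_{s'}$; they differ in some coordinate $\ell$, where one uses $[\lb{v}_\ell, x_\ell]$ and the other $[x_\ell, \ub{v}_\ell]$. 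Their interiors project in coordinate $\ell$ onto the open intervals $(\lb{v}_\ell, x_\ell)$ and $(x_\ell, \ub{v}_\ell)$, which are disjoint (they meet only in the excluded point $x_\ell$); hence $\interior B_s \cap \interior B_{s'} = \emptyset$.

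The argument is entirely elementary, so there is no real hard part; the only point requiring care is the interiority hypothesis $\lb{v} < x < \ub{v}$. It is exactly what guarantees that all $2^{l_i}$ boxes are nondegenerate and pairwise distinct --- if $x$ coincided with a bound in some coordinate, the corresponding lower or upper interval would collapse and the count would drop below $2^{l_i}$ --- and that the shared separating hyperplanes $\set{y \defsep y_\ell = x_\ell}$ contribute nothing to the interiors. I would therefore state the interiority assumption explicitly at the outset and invoke it precisely at these two places.
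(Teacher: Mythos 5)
Your proof is correct. The paper states this proposition without any proof at all, treating it as immediate from the definition of $\splitbox(\lb{v}, \ub{v}, x)$, so there is no argument in the paper to compare against; your coordinatewise indexing by sign vectors in $\set{0,1}^{l_i}$, with interiority of $x$ invoked exactly where it is needed (injectivity of $s \mapsto B_s$ and disjointness of the open intervals $(\lb{v}_\ell, x_\ell)$ and $(x_\ell, \ub{v}_\ell)$), is the standard argument the authors evidently had in mind.
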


\begin{algorithm}
  \caption{Iterative Method to Approximately Solve
    Problem~\eqref{eq:mult-problem}}
  \label{alg:mult-method}
  \begin{algorithmic}[1]
  \REQUIRE Problem~\eqref{eq:mult-problem} and
  $\myeps > 0$.\label{alg:mult-method-require}
  \ENSURE An approximate globally optimal and $\myeps$-feasible
  point for Problem~\eqref{eq:mult-problem} or an indication that
  Problem~\eqref{eq:mult-problem} is infeasible.
  \medskip
  \STATE Set $k \leftarrow 0$ and initialize
  $\boxset_i^0 = \set{B(\lb{x}_{\multind}, \ub{x}_{\multind})}$ for all $i
  \in [p]$.
  \FOR{$k = 0, 1, 2, \dotsc$}
  \STATE Solve the master problem~\eqref{eq:mult-problem-master} to global
  optimality.
  \label{alg:mult-method-solve-master}
  \IF{\eqref{eq:mult-problem-master} is infeasible}
  \RETURN ``Problem~\eqref{eq:mult-problem} is infeasible.''
  \label{alg:mult-method-return-infeasible}
  \ENDIF
  \STATE Let $x^k$ denote the optimal solution of
  \eqref{eq:mult-problem-master}.
  \label{alg:mult-method-returns-master-solution}
  \IF{$\abs{f_i(x_{\multind}^k) - x_{\rind}^k} \leq \myeps$ for
    all $i \in [p]$}
  \label{alg:mult-method-check-return-solution}
  \RETURN $x^k$.
  \label{alg:mult-method-return-solution}
  \ENDIF
  \STATE Determine the boxes $j_i^k \in J_i^k$ for all $i \in [p]$.
  \label{alg:mult-method-find-boxes}
  \STATE Solve the subproblem \eqref{eq:mult-problem-sub} and
  let~$\tilde{x}^k$ denote the optimal solution.
  \FOR{$i \in [p]$}
  \IF{$\abs{f_i(x_{\multind}^k) - x_{\rind}^k} > \myeps$}
  \label{alg:mult-method-check-split-box}
  \STATE $\boxset_i^{k+1} \leftarrow \left(\boxset_i^k \setminus
    \set{B(\lb{v}_i^k(j_i^k), \ub{v}_i^k(j_i^k))}\right) \cup
  \splitbox\left(\lb{v}_i^k(j_i^k), \ub{v}_i^k(j_i^k), \tilde{x}_{\multind}^k\right)$.
  \label{alg:mult-method-split-box}
  \ELSE
  \STATE $\boxset_i^{k+1} \leftarrow \boxset_i^k$.
  \label{alg:mult-method-not-split-box}
  \ENDIF
  \ENDFOR
  \ENDFOR
\end{algorithmic}


\end{algorithm}

We can now present the complete method, which is
given in Algorithm~\ref{alg:mult-method}.
Before we prove its correctness, let us first discuss its basic
functionality.
After the master problem~\eqref{eq:mult-problem-master} is solved in
Step~\ref{alg:mult-method-solve-master}, it is checked in
Step~\ref{alg:mult-method-check-return-solution} if its solution is
already $\myeps$-feasible for the original problem.
To determine the boxes $j_i^k \in J_i^k$ in
Step~\ref{alg:mult-method-find-boxes} one can simply check
the indicator variables $z_i^k(j)$ of the MIP formulation
\eqref{eq:mult-problem-master-mip}.
If the solution is not yet $\myeps$-feasible, then there are
nonlinearities $f_i$ that have feasibility violations larger
than~$\myeps$.
For these nonlinearities we refine the relaxation of the master
problem in Step~\ref{alg:mult-method-split-box} and re-iterate.

Note that it is not necessary for the correctness of
Algorithm~\ref{alg:mult-method} to solve the
subproblem~\eqref{eq:mult-problem-sub} to global optimality.
Our rationale, however, is that optimal solutions of the subproblems
yield better splitting points that lead to faster convergence in
practice.
For the correctness of the algorithm it is sufficient to find feasible
points of \eqref{eq:mult-problem-sub} that are guaranteed to exist
due to the following lemma.

\begin{lemma}
  All subproblems~\eqref{eq:mult-problem-sub} are feasible if
  Property~\eqref{eq:non-overlapping-indices} is satisfied.
\end{lemma}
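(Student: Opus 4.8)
The plan is to construct an explicit feasible point of \eqref{eq:mult-problem-sub} by choosing, for each nonlinearity $i \in [p]$ separately, a point on $\graph(f_i)$ that lies in the prescribed sub\rev{region} $\tilde{\Omega}_i^k(j_i^k)$, and then to glue these blockwise choices into a single vector $\tilde{x}$. Property~\eqref{eq:non-overlapping-indices} is precisely what makes the gluing unambiguous: it guarantees that the coordinate blocks $\allind = (\multind, \rind)$ are pairwise disjoint, so assigning values to $\tilde{x}_{\allind}$ for one index $i$ never conflicts with the assignment for another.

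First I would fix $i \in [p]$ and examine the projection of $\hat{\Omega}_i^k(j_i^k)$ onto the $x_{\multind}$-coordinates, which is the box with lower corner $(1-\lambda)\lb{v}_i^k(j_i^k) + \lambda \ub{v}_i^k(j_i^k)$ and upper corner $\lambda \lb{v}_i^k(j_i^k) + (1-\lambda)\ub{v}_i^k(j_i^k)$. Because $\lambda \in (0, 1/2]$ we have $1 - 2\lambda \geq 0$, so the lower corner is componentwise at most the upper corner and this box is nonempty; in fact it always contains the center $m_i^k(j_i^k)$, since the two corners are symmetric about $m_i^k(j_i^k)$. I would therefore pick any such point, say $\tilde{x}_{\multind} \define m_i^k(j_i^k)$, and set $\tilde{x}_{\rind} \define f_i(\tilde{x}_{\multind})$, which satisfies the graph equation $f_i(\tilde{x}_{\multind}) = \tilde{x}_{\rind}$ by construction.

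It then remains to verify that the pair $(\tilde{x}_{\multind}, \tilde{x}_{\rind})$ lies in $\tilde{\Omega}_i^k(j_i^k) = \Omega_i^k(j_i^k) \cap \hat{\Omega}_i^k(j_i^k)$. Membership in $\hat{\Omega}_i^k(j_i^k)$ is immediate, since $\tilde{x}_{\multind}$ was chosen in its $x_{\multind}$-projection and $\hat{\Omega}_i^k(j_i^k)$ imposes no restriction on $x_{\rind}$. For membership in $\Omega_i^k(j_i^k)$, note that the shrunk box is contained in $B(\lb{v}_i^k(j_i^k), \ub{v}_i^k(j_i^k))$, so the $x_{\multind}$-bounds hold, and the Lipschitz bounds~\eqref{eq:box-bounds-rhs} (equivalently, $\graph(f_i) \subseteq \Omega_i^k$ from Proposition~\ref{thm:graph_in_omega}) guarantee that $\tilde{x}_{\rind} = f_i(\tilde{x}_{\multind})$ lies in the admissible band for $x_{\rind}$. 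Hence the chosen block $\tilde{x}_{\allind}$ is feasible for the $i$-th constraint group.

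Finally, I would assemble these blocks. By~\eqref{eq:non-overlapping-indices} the index sets $\allind$ are pairwise disjoint, so the blockwise assignments define one consistent vector $\tilde{x}$, with any coordinate outside $\bigcup_{i \in [p]} \allind$ set arbitrarily (for instance to the corresponding entry of $x^k$). This $\tilde{x}$ satisfies all constraints of \eqref{eq:mult-problem-sub} simultaneously, which proves feasibility. The only genuinely delicate point is this gluing step: without the non-overlapping assumption a coordinate shared by two nonlinearities could be forced to take two incompatible values (for example, a right-hand index $\rind$ coinciding with some $I_j$), and the individually feasible choices would not necessarily combine into a globally feasible point. Under~\eqref{eq:non-overlapping-indices} this obstruction disappears, and everything else reduces to a direct check against the definitions of $\Omega_i^k(j)$ and $\hat{\Omega}_i^k(j)$.
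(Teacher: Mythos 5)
Your proof is correct and follows essentially the same route as the paper's: decouple the constraints via the non-overlapping index sets, and observe that since $\hat{\Omega}_i^k(j)$ restricts only the $x_{\multind}$-coordinates, a point of $\graph(f_i)$ over the (nonempty) shrunk domain box lies in $\tilde{\Omega}_i^k(j_i^k)$. You merely make the paper's terse argument explicit by exhibiting the concrete witness $(m_i^k(j_i^k), f_i(m_i^k(j_i^k)))$ and verifying the gluing step, which is a valid filling-in of the same idea.
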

\begin{proof}
  \rev{Because of Property~\eqref{eq:non-overlapping-indices}, the nonlinear constraints $f_i(\tilde{x}_{\multind}) = \tilde{x}_{\rind}$ in~\eqref{eq:mult-problem-sub} do not depend on each other.
  From Proposition~\ref{thm:graph_in_omega} we know that the graph of $f_i$ over the initial region~$B(\lb{x}_{\multind}, \ub{x}_{\multind})$ lies entirely in $\Omega_i^k$.
  Since the nonempty sub\rev{region} $\tilde{\Omega}_i^k(j)$ restricts $\Omega_i^k$ in all but the last dimension, the claim follows.}
\end{proof}

Next, we prove that Algorithm~\ref{alg:mult-method} always terminates
after finitely many iterations.
\begin{theorem}
  \label{thm:mult-method-convergence}
  There exists a constant $K < \infty$ such that Algorithm
  \ref{alg:mult-method} either
  terminates with an approximate global optimal \rev{solution}
  $x^{k^*}$ or with the indication of infeasibility in an iteration
  $k^* \leq K$.
\end{theorem}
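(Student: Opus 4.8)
The plan is to establish finite termination by showing that the box refinement cannot continue indefinitely, so that in each run either a master problem \eqref{eq:mult-problem-master} is declared infeasible or the $\myeps$-feasibility test in Step~\ref{alg:mult-method-check-return-solution} must eventually succeed. The quantitative ingredient I would prove first is a \emph{tightness estimate}: for a box $B(\lb{v},\ub{v}) \subset \R^{l_i}$ with center $m$ and any $(x_{\multind},x_{\rind}) \in \Omega_i^k(j)$ attached to it, the triangle inequality, the Lipschitz property, and the defining bounds of $\Omega_i^k(j)$ give
\begin{equation*}
  \abs{f_i(x_{\multind}) - x_{\rind}}
  \leq \abs{f_i(x_{\multind}) - f_i(m)} + \abs{f_i(m) - x_{\rind}}
  \leq \tfrac{1}{2} L_i \norm{\ub{v} - \lb{v}} + \tfrac{1}{2} L_i \norm{\ub{v} - \lb{v}}
  = L_i \norm{\ub{v} - \lb{v}},
\end{equation*}
where the first summand uses $\norm{x_{\multind} - m} \leq \tfrac{1}{2}\norm{\ub{v}-\lb{v}}$. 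Hence, as soon as the box containing $x^k_{\allind}$ satisfies $L_i \norm{\ub{v} - \lb{v}} \leq \myeps$, the master solution is automatically $\myeps$-feasible for nonlinearity~$i$, so this box is never split in Step~\ref{alg:mult-method-split-box}.

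Next I would bound the shrinkage rate of the boxes. Because the subproblem solution $\tilde{x}^k$ is forced into $\tilde{\Omega}_i^k(j) \subseteq \hat{\Omega}_i^k(j)$, each coordinate of the splitting point $\tilde{x}^k_{\multind}$ stays at distance at least $\lambda(\ub{v}_\ell - \lb{v}_\ell)$ from both faces of the box; hence every child box produced by $\splitbox(\lb{v}, \ub{v}, \tilde{x}^k_{\multind})$ has width at most $(1-\lambda)(\ub{v}_\ell - \lb{v}_\ell)$ in each coordinate~$\ell$. Since $\lambda \in (0,1/2]$, the contraction factor $1-\lambda < 1$ is uniform, so along any branch of the refinement tree the coordinatewise widths, and hence (by equivalence of norms on $\R^{l_i}$) the diameter, decrease geometrically: a box at depth~$t$ satisfies $\norm{\ub{w} - \lb{w}} \leq c_i\,(1-\lambda)^t$ for a constant $c_i$ depending only on $B(\lb{x}_{\multind},\ub{x}_{\multind})$ and the chosen norm.

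Combining both estimates yields finiteness. Let $T_i$ be the smallest integer with $L_i c_i (1-\lambda)^{T_i} \leq \myeps$, which is finite since $1-\lambda<1$. By the tightness estimate no box of depth~$T_i$ is ever split, so the refinement tree of each~$i$ has depth at most~$T_i$; as every split creates $2^{l_i}$ children, at most $\sum_{t=0}^{T_i-1} 2^{l_i t}$ splits can occur for nonlinearity~$i$. Every non-terminating iteration performs at least one split, because some violation exceeds $\myeps$ and triggers Step~\ref{alg:mult-method-split-box}. Therefore the algorithm stops after at most $K \define \sum_{i \in [p]} \sum_{t=0}^{T_i-1} 2^{l_i t}$ iterations. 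At termination it either returns an $\myeps$-feasible point $x^{k^*}$ whose objective equals the master optimum which, by Proposition~\ref{thm:graph_in_omega} (each master problem relaxes \eqref{eq:mult-problem}), lower-bounds the optimal value of \eqref{eq:mult-problem} and thus renders $x^{k^*}$ approximately globally optimal, or it reports infeasibility, which is correct for \eqref{eq:mult-problem} by the same relaxation property.

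The step I expect to be the crux is the uniform shrinkage bound of the second paragraph: the whole argument collapses if splitting points may approach box faces, since then boxes could be subdivided arbitrarily often while their diameters never fall below $\myeps/L_i$. This is precisely what the sub\rev{region} $\hat{\Omega}_i^k(j)$ with $\lambda>0$ is designed to prevent, while feasibility of the subproblems (guaranteed by the preceding lemma under Property~\eqref{eq:non-overlapping-indices}) ensures that a valid splitting point $\tilde{x}^k$ always exists.
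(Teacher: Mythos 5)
Your proof is correct and follows essentially the same route as the paper's: the tightness estimate $\abs{f_i(x_{\multind})-x_{\rind}} \leq L_i\norm{\ub{v}-\lb{v}}$ showing that sufficiently small boxes can never trigger a split, the geometric shrinkage by the factor $1-\lambda$ enforced through $\hat{\Omega}_i^k(j)$, and the finite counting of splits over finitely many nonlinearities. Your additional remarks---the explicit bound $K$ (which the paper defers to Theorem~\ref{thm:iterations-worst-case}) and the observation that the relaxation property of the master problem justifies both the infeasibility certificate and the approximate global optimality of the returned point---are correct and, if anything, make the argument more complete than the paper's version.
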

\begin{proof}
  The box $\Omega_i^k(j)$ is bounded for each iteration $k$ and all $i
  \in [p]$ and $j \in J_i^k$.
  For the $x_{\rind}$-coordinate, the bounding inequalities are given by
  \begin{equation*}
    f_i(m_i^k(j)) - \frac{1}{2}L_i \norm{\ub{v}_i^k(j) - \lb{v}_i^k(j)}
    \leq
    x_{\rind}
    \leq
    f_i(m_i^k(j)) + \frac{1}{2}L_i \norm{\ub{v}_i^k(j) - \lb{v}_i^k(j)}.
  \end{equation*}
  Therefore the corresponding side length of the box~$\Omega_i^k(j)$
  in its $x_{\rind}$-coordinate is
  \begin{equation*}
    d_{\rind}^k(j)
    \define
    L_i \norm{\ub{v}_i^k(j) - \lb{v}_i^k(j)}.
  \end{equation*}
  If $d_{\rind}^k(j) \leq \myeps$ holds, then the inequality
  \begin{equation*}
    \abs{f_i(x_{\multind}^k) - x_{\rind}^k} > \myeps
  \end{equation*}
  in Step~\ref{alg:mult-method-check-split-box} of
  Algorithm~\ref{alg:mult-method} cannot be fulfilled.
  It follows that the box $B(\lb{v}_i^k(j_i^k), \ub{v}_i^k(j_i^k))$
  will not be split but remains the same in
  Step~\ref{alg:mult-method-not-split-box}.

  Next, we analyze how $d_{\rind}^k(j)$ changes if a box is split into
  smaller boxes.
  Let
  \begin{equation*}
    B\left(\lb{v}_i^{k+1}(j), \ub{v}_i^{k+1}(j)\right) \in
    \splitbox\left(\lb{v}_i^k(j_i^k), \ub{v}_i^k(j_i^k), \tilde{x}_i^k\right)
  \end{equation*}
  be one of the smaller boxes that is added to $\boxset_i^k$ in Step
  \ref{alg:mult-method-split-box}.
  The side length~$d_{\rind}^{k+1}(j)$ of the corresponding
  box~$\Omega_i^{k+1}(j)$ can be bounded from above via
  \begin{equation*}
    d_{\rind}^{k+1}(j)
    = L_i \norm{\ub{v}_i^{k+1}(j) - \lb{v}_i^{k+1}(j)}
    \leq (1 - \lambda) L_i \norm{\ub{v}_i^k(j_i^k) - \lb{v}_i^k(j_i^k)}
    = (1 - \lambda) d_{\rind}^k(j_i^k).
  \end{equation*}
  This means that $d_{\rind}^k(j)$ is decreased by at least a factor of
  $(1 - \lambda)$ if a box is split in
  Step~\ref{alg:mult-method-split-box}.
  Since $\left((1 - \lambda)^k\right)_{k \in \N}$ is a geometric
  sequence with $\abs{1 - \lambda} < 1$, it converges to zero, \ie,
  $(1 - \lambda)^k \to 0 \text{ as } k \to \infty$.
  It follows that any box $B(\lb{v}_i^k(j), \ub{v}_i^k(j))$---including the
  first box $B(\lb{x}_{\multind}, \ub{x}_{\multind})$---can only
  be split finitely many times in Step~\ref{alg:mult-method-split-box} before
  the side length $d_{\rind}^k(j)$ of $\Omega_i^k(j)$ fulfills $d_{\rind}^k(j)
  \leq \myeps$.

  Since the index set~$[p]$ is finite, there are only finitely many
  boxes in $\boxset_i^k$ for all $i \in [p]$ and all $k$.
  These boxes can only be split finitely many times.
  Hence, there exists an iteration $K < \infty$ in which no box will be
  split in Step~\ref{alg:mult-method-split-box}.
  This, however, can only be the case if the if-condition in
  Step~\ref{alg:mult-method-check-split-box} does not hold for all $i
  \in [p]$.
  Thus, we have
  \begin{equation*}
    \abs{f_i(x_{\multind}^K) - x_{\rind}^K} \leq \myeps
    \quad \text{for all } i \in [p],
  \end{equation*}
  which is the if-condition in
  Step~\ref{alg:mult-method-check-return-solution}.
  This means that the algorithm terminates in
  Step~\ref{alg:mult-method-return-solution} in an iteration~$K$.
  Hence, it follows that there exists a $K < \infty$ such that
  Algorithm~\ref{alg:mult-method} terminates in
  Step~\ref{alg:mult-method-return-infeasible} or
  \ref{alg:mult-method-return-solution} in an iteration $k^* \leq
  K$.
\end{proof}

We close this section by stating and proving a result for
the worst-case number of required iterations of
Algorithm~\ref{alg:mult-method}.
\begin{theorem}
  \label{thm:iterations-worst-case}
  Algorithm \ref{alg:mult-method} terminates after at most
  \begin{equation*}
    K = \sum_{i \in [p]} \sum_{k = 0}^{S_i} 2^{k l_i}
  \end{equation*}
  iterations with
  \begin{equation*}
    S_i = \left\lceil \log_{(1 - \lambda)} \left(\frac{\myeps}{L_i
    \norm{\ub{x}_{\multind} - \lb{x}_{\multind}}}\right)\right\rceil
  \end{equation*}
  for $i \in [p]$.
\end{theorem}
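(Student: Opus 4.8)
The plan is to build directly on the quantitative reduction already established in the proof of Theorem~\ref{thm:mult-method-convergence}. There it was shown that splitting a box decreases the side length $d_{\rind}$ of the associated region $\Omega_i^k(j)$ in the $x_{\rind}$-coordinate by at least a factor $(1-\lambda)$, and that once $d_{\rind} \leq \myeps$ the box is never split again (the if-condition in Step~\ref{alg:mult-method-check-split-box} can no longer hold). Since the initial box for nonlinearity~$i$ is $B(\lb{x}_{\multind}, \ub{x}_{\multind})$ with side length $L_i \norm{\ub{x}_{\multind} - \lb{x}_{\multind}}$, a box obtained after $s$ successive splits satisfies $d_{\rind} \leq (1-\lambda)^s L_i \norm{\ub{x}_{\multind} - \lb{x}_{\multind}}$. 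First I would determine the smallest split depth $s$ that forces $d_{\rind} \leq \myeps$ by solving $(1-\lambda)^s L_i \norm{\ub{x}_{\multind} - \lb{x}_{\multind}} \leq \myeps$. Because $1-\lambda \in (0,1)$, applying $\log_{(1-\lambda)}$ reverses the inequality and yields exactly the threshold $S_i$ stated in the theorem. Hence no box for nonlinearity~$i$ is ever split beyond depth $S_i$.

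Next I would count how many boxes can be created for a fixed~$i$. By the proposition on $\splitbox$, each split replaces one box by exactly $2^{l_i}$ boxes with pairwise disjoint interiors that cover the parent. Thus the family $\boxset_i^k$ evolves as the leaf set of a $2^{l_i}$-ary tree rooted at the initial box, where a node is an internal (split) node precisely when the if-condition in Step~\ref{alg:mult-method-check-split-box} triggers for it. Since the depth is bounded by $S_i$, this tree has at most $2^{k l_i}$ nodes at each depth $k \in \{0, \dots, S_i\}$, so the total number of boxes ever produced for nonlinearity~$i$ is at most $\sum_{k=0}^{S_i} 2^{k l_i}$.

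Finally I would translate the box count into an iteration count. The key observation is that in every iteration that does not already terminate in Step~\ref{alg:mult-method-return-solution}, the if-condition in Step~\ref{alg:mult-method-check-split-box} holds for at least one $i \in [p]$, so at least one box is split; moreover each box is split at most once, since it is removed from $\boxset_i^k$ in Step~\ref{alg:mult-method-split-box} and replaced by its children. Consequently the number of non-terminating iterations is bounded by the total number of splits, which is itself bounded by the total number of boxes summed over all~$i$. Adding the single terminating iteration and using that each nonlinearity contributes at least one leaf (hence never-split) box, the off-by-one is absorbed and I obtain the claimed bound $K = \sum_{i \in [p]} \sum_{k=0}^{S_i} 2^{k l_i}$.

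The logarithm manipulation and the geometric tree count are routine. The step that needs the most care is the last bookkeeping argument: several boxes belonging to different nonlinearities may be split within the same iteration, so one must verify that charging each iteration to the splits it performs, together with accounting for the never-split leaves and the terminating iteration, produces the constant~$K$ exactly rather than $K+1$.
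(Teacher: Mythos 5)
Your proposal is correct and follows essentially the same route as the paper's proof: the split depth is bounded by $S_i$ via the $(1-\lambda)$ decay of $d_{\rind}$, the boxes per nonlinearity are counted by the geometric sum $\sum_{k=0}^{S_i} 2^{k l_i}$, and the worst case of one split per iteration forces the sum over $i \in [p]$. Your final bookkeeping (tree of boxes, each split at most once, at least one split per non-terminating iteration) is in fact slightly more careful than the paper's, which states the per-$i$ iteration bound somewhat informally; no gap in either direction.
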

\begin{proof}
  From the proof of Theorem~\ref{thm:mult-method-convergence} we know
  that a box~$B(\lb{v}_i^k(j_i^k), \ub{v}_i^k(j_i^k))$ can only be
  split finitely many times before the side length $d_{\rind}^k(j)$ of
  $\Omega_i^k(j)$ satisfies $d_{\rind}^k(j) \leq \myeps$.
  We can give an upper bound for how many iterations this takes for
  the first box $B(\lb{x}_{\multind}, \ub{x}_{\multind})$ by solving
  the equation
  \begin{equation*}
    (1 - \lambda)^k L_i \norm{\ub{x}_{\multind} -
      \lb{x}_{\multind}}
    = \myeps
  \end{equation*}
  for $k$, which yields
  \begin{equation*}
    k = \log_{(1 - \lambda)} \left(\frac{\myeps}{L_i
        \norm{\ub{x}_{\multind} - \lb{x}_{\multind}}}\right).
  \end{equation*}
  Since the box $B(\lb{v}_i^k(j_i^k), \ub{v}_i^k(j_i^k))$ will not be split
  anymore if $d_{\rind}^k(j) \leq \myeps$, we can round this value to
  obtain
  \begin{equation*}
    S_i = \left\lceil \log_{(1 - \lambda)} \left(\frac{\myeps}{L_i
          \norm{\ub{x}_{\multind} - \lb{x}_{\multind}}} \right)
    \right\rceil.
  \end{equation*}

  For each box that is split, there are $2^{l_i}$ smaller boxes that
  are added to $\boxset_i^k$.
  Therefore, for each $i \in [p]$ the maximal number of iterations
  required until there are no boxes left in~$\boxset_i^k$ that can be
  split is bounded from above by
  \begin{equation}
    \label{eq:mult-method-iteration-limit-i}
    \sum_{k = 0}^{S_i} \left( 2^{l_i} \right)^k
    = \sum_{k = 0}^{S_i} 2^{k \, l_i}.
  \end{equation}
  Since it is possible that in each iteration, there is only a single
  nonlinearity~$i \in [p]$ for which a box is split, we have to sum up
  \eqref{eq:mult-method-iteration-limit-i} for each $i \in [p]$ to get
  \begin{equation*}
    K = \sum_{i \in [p]} \sum_{k = 0}^{S_i} 2^{k l_i}
  \end{equation*}
  as an upper bound for the required number of iterations of
  Algorithm~\ref{alg:mult-method}.
\end{proof}

\begin{remark}
  Theorem~\ref{thm:iterations-worst-case} states that choosing
  $\lambda = 0.5$ results in the lowest number of iterations in the
  worst-case. Then, no subproblem~\eqref{eq:mult-problem-sub} is
  needed as one can simply evaluate the nonlinearity~$f_i$ in the
  center point~$m_i^k(j_i^k)$ of the current box to receive the
  splitting point.
  However, in practice it can be better to choose a smaller
  parameter~$\lambda$, which allows the splitting point to be closer to
  the master problem's solution and which, thus, may result in a finer
  approximation of the nonlinearity near the optimal solution of
  Problem~\eqref{eq:mult-problem}.
\end{remark}


\section{Application to Nonlinear Bilevel Problems}
\label{sec:application-bilevel}

The method developed in the previous section can be applied to
nonlinear bilevel problems with nonconvex lower-level models, which
is an extremely challenging class of problems.
To illustrate this, we consider optimistic MIQP-QP bilevel problems of
the form
\begin{align}
  \label{prob:nonlinear-bilevel}
  \begin{split}
    \min_{\ulVar, \llVar} \quad
    &
    \frac{1}{2} \ulVar^\top
    H_\ulIdx \ulVar
    + c_\ulIdx^\top \ulVar
    + \frac{1}{2} \llVar^\top G_\ulIdx \llVar
    + d_\ulIdx^\top \llVar
    \\
    \st \quad
    &
    A \ulVar + B \llVar \leq a,
    \quad \lb{x} \leq x \leq \ub{x},
    \quad \ulVar \in \R^{\dimUL},
    \\
    &
    \llVar \in \argmin_{\lowerLLVar} \bigg\{
    \frac{1}{2} \lowerLLVar^\top G_\llIdx \lowerLLVar
    + d_\llIdx^\top \lowerLLVar :
    \begin{aligned}[t]
      & C \ulVar + D \lowerLLVar \leq b,
      \ \lb{y} \leq \lowerLLVar \leq \ub{y},
      \ \lowerLLVar \in \R^\dimLL \bigg\},
    \end{aligned}
  \end{split}
\end{align}
where $\ulVar \in \R^\dimUL$ and $\llVar \in \R^\dimLL$ denote the upper-
and lower-level variables, which are finitely bounded by $\lb{x}$,
$\ub{x}$, $\lb{y}$, and $\ub{y}$.
Further, we have matrices $A \in \R^{\nrConstrUL
  \times \dimUL}$, $B \in \R^{\nrConstrUL\times\dimLL}$,
$C\in \R^{\nrConstrLL\times\dimUL}$,
$D\in\R^{\nrConstrLL\times\dimLL}$,
as well as right-hand side vectors $a\in\R^\nrConstrUL$ and
$b\in\R^\nrConstrLL$.
In addition, we have $c_\ulIdx \in \R^\dimUL$ and $d_\ulIdx, d_\llIdx
\in \R^\dimLL$.
Finally, $H_\ulIdx \in \R^{\dimUL \times \dimUL}$, $G_\ulIdx \in
\R^{\dimLL\times\dimLL}$ are positive semidefinite and
symmetric matrices, while $G_\llIdx\in \R^{\dimLL\times\dimLL}$ is a
possibly indefinite and symmetric matrix.
Thus, the upper level is a convex-quadratic problem over
linear constraints and the lower-level problem
\begin{equation}
  \label{prob:lower-level}
  \min_{\lowerLLVar}
  \quad
  \frac{1}{2} \lowerLLVar^\top G_\llIdx \lowerLLVar
  + d_\llIdx^\top \lowerLLVar
  \quad
  \st
  \quad
  C \ulVar + D \lowerLLVar \leq b,
  \ \lb{y} \leq \lowerLLVar \leq \ub{y},
  \ \lowerLLVar \in \R^\dimLL,
\end{equation}
is an $\ulVar$-parameterized and continuous but nonconvex quadratic
problem.
Let $\varphi(\cdot)$ be the optimal-value function of the
lower level, \ie,
\begin{equation*}
  \varphi({\ulVar})
  \define
  \min_{\lowerLLVar}
  \bigg\{
  \frac{1}{2} \lowerLLVar^\top G_\llIdx \lowerLLVar + d_\llIdx^\top \lowerLLVar :
  C \ulVar + D \lowerLLVar \leq b, \ \lb{y} \leq \lowerLLVar \leq \ub{y}, \ \lowerLLVar \in \R^\dimLL
  \bigg\}.
\end{equation*}
With this, we can rewrite \Probref{prob:nonlinear-bilevel}
equivalently as the single-level problem
\begin{subequations}
  \label{prob:nonlinear-bilevel-single-level}
  \begin{align}
    \min_{\ulVar, \llVar} \quad
    &
      \frac{1}{2} \ulVar^\top
      H_\ulIdx \ulVar
      + c_\ulIdx^\top \ulVar
      + \frac{1}{2} \llVar^\top G_\ulIdx \llVar
      + d_\ulIdx^\top \llVar
    \\
    \st \quad
    &
      A \ulVar + B \llVar \leq a, \quad C \ulVar + D \llVar \leq b,
    \\
    & \lb{x} \leq x \leq \ub{x},
      \quad \lb{y} \leq y \leq \ub{y},
      \quad \ulVar \in \R^\dimUL,
      \quad \llVar \in \R^\dimLL,
    \\
    &
      \frac{1}{2} \llVar^\top G_\llIdx \llVar + d_\llIdx^\top \llVar
      \leq
      \varphi(\ulVar), \label{prob:nonlinear-bilevel-single-level:optimalityLL}
  \end{align}
\end{subequations}
see, \eg, \cite{Dempe:2002}.
We now reformulate Problem~\eqref{prob:nonlinear-bilevel-single-level}
so that it fits into the framework introduced above.
Therefore, we introduce the auxiliary variables $\eta_1$ and $\eta_2$
as well as the nonlinear function $f: \R^{\dimLL} \rightarrow \R$ with
$f(\llVar) = 1/2 \llVar^\top G_\llIdx \llVar + d_\llIdx^\top \llVar$.
Based on this notation,
Problem~\eqref{prob:nonlinear-bilevel-single-level} can be restated as
\begin{subequations}
  \label{prob:nonlinear-bilevel-single-level-reform-to-fit-our-setup}
  \begin{align}
    \min_{\ulVar, \llVar, \eta_1, \eta_2} \quad
    &
      \frac{1}{2} \ulVar^\top
      H_\ulIdx \ulVar
      + c_\ulIdx^\top \ulVar
      + \frac{1}{2} \llVar^\top G_\ulIdx \llVar
      + d_\ulIdx^\top \llVar
    \\
    \st \quad
    &
      A \ulVar + B \llVar \leq a,
      \quad C \ulVar + D \llVar \leq b,
    \\
    & \lb{x} \leq x \leq \ub{x},
      \quad \lb{y} \leq y \leq \ub{y},
      \quad \ulVar \in \R^\dimUL,
      \quad \llVar \in \R^\dimLL,
    \\
    & \eta_2 - \eta_1 \leq 0,
      \quad \varphi(\ulVar) = \eta_1,
      \quad f(\llVar) = \eta_2,
      \quad \eta_1, \eta_2 \in \R.
  \end{align}
\end{subequations}
Now, the method developed in Section~\ref{sec:algorithm} can be
applied to
\eqref{prob:nonlinear-bilevel-single-level-reform-to-fit-our-setup} if
(i) the nonconvex functions $\varphi$ and $f$ are Lipschitz
continuous on the projections of the bilevel constraint region onto
the decision space of the upper and lower level, respectively, \ie,
on the domains
\begin{align*}
  \mathcal{F}_\ulVar
  & \define \Defset{\ulVar \in [\lb{x}, \ub{x}]}{\exists \llVar \in \R^\dimLL
    \text{ such that } A \ulVar + B \llVar \leq a, \, C \ulVar + D
    \llVar \leq b, \, \lb{y} \leq y \leq \ub{y}},
  \\
  \mathcal{F}_\llVar
  & \define \Defset{\llVar \in [\lb{y}, \ub{y}]}{\exists \ulVar \in \R^\dimUL
    \text{ such that } A \ulVar + B \llVar \leq a, \, C \ulVar + D
    \llVar \leq b, \, \lb{x} \leq x \leq \ub{x}},
\end{align*}
and if (ii) the associated Lipschitz constants are computable.

What makes things more complicated compared to the general setup
described in Section~\ref{sec:algorithm} is that we can only evaluate
the optimal-value function~$\varphi(\ulVar)$ but we cannot optimize
over it.
Thus, we cannot use subproblem~\eqref{eq:mult-problem-sub} directly
to obtain a new splitting point.
On the other hand, following the strategy to take the box center~$m$
as the new splitting point simplifies solving problem
\eqref{eq:mult-problem-sub} to evaluating~$\varphi(m)$.
More precisely, using the box center corresponds to setting $\lambda =
1/2$.
This, however, is only applicable if $\varphi(m)$ is well-defined,
which we ensure with the following assumption.
\begin{assumption}
  \label{ass:compactness_LL}
  The set $\mathcal{T}(\ulVar) \define \defset{ \llVar \in \R^{\dimLL}}{D
    \llVar \leq b - C \ulVar, \, \lb{y} \leq y \leq \ub{y}}$ is nonempty
  for all $\ulVar \in B(\lb{\ulVar}, \ub{\ulVar})$.
\end{assumption}
This assumption implies that $- \infty < \varphi(\ulVar) < +
\infty$ for all $\ulVar \in B(\lb{\ulVar}, \ub{\ulVar})$, \ie, a
minimizer~$\llVar$ of Problem \eqref{prob:lower-level} exists for all
$\ulVar \in B(\lb{\ulVar}, \ub{\ulVar})$ and, thus, for every
possible box center~$m$.

\rev{Before we present the theoretical developments that are required
  to apply our method to the introduced class of bilevel problems, let
  us brief\/ly discuss that interpreting approximate solutions of
  bilevel problems with nonconvex lower-level problems needs to be
  done with some care.
  In particular, it is shown in \cite{Beck_et_al:2022c} that
  lower-level solutions that are only $\eps$-feasible can lead to
  upper-level solutions that can be arbitrary far away from actual
  bilevel solutions.
  Since this also applies to our method, we later always present the
  difference of our solutions to the known optimal solutions of the
  bilevel problems in our test set.}

\subsection{Lipschitz Continuity Properties}
\label{sec:lipsch-cont-prop}

To apply our method with the outlined modifications for the subproblem
to Problem~\eqref{prob:nonlinear-bilevel-single-level-reform-to-fit-our-setup},
it remains to show that the properties (i) and (ii) are fulfilled.
We start with the nonconvex function~$f$.
Since the relevant domain $B(\lb{\llVar}, \ub{\llVar})$ of this
function is compact, continuous differentiability of~$f$ implies
global Lipschitz continuity of~$f$ on this set.
Since $B(\lb{\llVar}, \ub{\llVar})$ is convex and compact, the
tightest Lipschitz constant can be computed by solving the
optimization problem
\begin{equation}
  \label{eq:Lipschitz_constant_f}
  \max_{\llVar \in B(\lb{\llVar}, \ub{\llVar})}
  || G_\llIdx  \llVar + d_\llIdx ||.
\end{equation}
Note that it would also be possible to compute the Lipschitz
constant in~\eqref{eq:Lipschitz_constant_f}
over the feasible set of the master problem, \ie, over the
set~$\mathcal{F}_\llVar$.
However, this involves solving an optimization problem not over
a simple box but over a more complex polytope.
In our computational study, we test both variants.
We will denote the former method as the ``fast'' method and the
latter as the ``slow'' method.
In the absence of lower- and upper-level constraints
except for simple variable bounds, both approaches coincide.

Next, we continue with the more difficult case of proving Lipschitz
continuity of the optimal-value function $\varphi$.
To this end, we exploit a variant of the Hoffman Lemma; for the
original version see the main theorem in \cite{Hoffman:1952} or
Lemma~5.8 in \cite{Still:2018}.
For the ease of presentation, we assume from now on that the finite
bounds on~$y$ are part of the lower-level inequality constraints and,
thus, also part of the matrix~$C$.
\begin{lemma}[see Corollary 5.1 in \cite{Still:2018}]
  \label{lemma:variant_Hoffman_lemma}
  There exists $L_H > 0$ such that for any~$\ulVar,\tilde{\ulVar} \in
  B(\lb{\ulVar}, \ub{\ulVar})$ it holds:
  For any $\llVar \in \mathcal{T}(\ulVar)$ we can find a point
  $\tilde{\llVar} \in \mathcal{T}(\tilde{\ulVar})$ with
  \begin{align*}
    ||\tilde{\llVar}-\llVar||
    \leq L_H ||C (\ulVar-\tilde{\ulVar})||
    \leq L_H ||C|| \, ||\tilde{\ulVar}-\ulVar||.
  \end{align*}
\end{lemma}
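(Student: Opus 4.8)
The plan is to reduce the statement to the classical Hoffman error bound applied to the fixed polyhedral system governed by the matrix~$D$. First I would observe that, by Assumption~\ref{ass:compactness_LL}, the set $\mathcal{T}(\tilde{\ulVar})$ is nonempty for every $\tilde{\ulVar} \in B(\lb{\ulVar}, \ub{\ulVar})$; since it is moreover closed and convex, the Euclidean projection of the given point~$\llVar$ onto $\mathcal{T}(\tilde{\ulVar})$ exists and attains the distance. Hence it suffices to bound $\operatorname{dist}(\llVar, \mathcal{T}(\tilde{\ulVar}))$ and to take $\tilde{\llVar}$ to be the corresponding projection, which automatically satisfies $\tilde{\llVar} \in \mathcal{T}(\tilde{\ulVar})$ and $\norm{\tilde{\llVar} - \llVar} = \operatorname{dist}(\llVar, \mathcal{T}(\tilde{\ulVar}))$.

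The main tool is Hoffman's lemma for the system $D\tilde{\llVar} \leq b - C\tilde{\ulVar}$. Its decisive feature in this context is that the Hoffman constant $L_H$ depends only on the coefficient matrix~$D$ and the chosen norm, but \emph{not} on the right-hand side $b - C\tilde{\ulVar}$; this is precisely the content of the cited Corollary~5.1 in \cite{Still:2018}. Consequently one and the same $L_H$ serves all parameter values $\tilde{\ulVar}$ simultaneously, which is what yields a uniform bound. Writing $(\cdot)_+$ for the componentwise positive part, Hoffman's lemma then gives $\operatorname{dist}(\llVar, \mathcal{T}(\tilde{\ulVar})) \leq L_H \norm{(D\llVar - (b - C\tilde{\ulVar}))_+}$.

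It remains to estimate the residual by the parameter perturbation. Since $\llVar \in \mathcal{T}(\ulVar)$ satisfies $D\llVar \leq b - C\ulVar$, I would subtract to obtain $D\llVar - (b - C\tilde{\ulVar}) \leq C(\tilde{\ulVar} - \ulVar)$ componentwise; hence the positive part is dominated componentwise, so that $\norm{(D\llVar - (b - C\tilde{\ulVar}))_+} \leq \norm{C(\tilde{\ulVar} - \ulVar)} = \norm{C(\ulVar - \tilde{\ulVar})}$. Combining this with the Hoffman estimate gives the first asserted inequality, and submultiplicativity of the operator norm, $\norm{C(\ulVar - \tilde{\ulVar})} \leq \norm{C}\,\norm{\tilde{\ulVar} - \ulVar}$, yields the second. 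The one genuinely delicate point—and the part where care is needed—is the uniformity of $L_H$ across all $\tilde{\ulVar}$: this is not automatic from the most elementary formulation of Hoffman's lemma, but it is guaranteed by the variant in \cite{Still:2018}, in which the constant is governed solely by the fixed matrix~$D$. Everything else is a routine chaining of the residual bound with the two norm inequalities.
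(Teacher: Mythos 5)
Your argument is correct. Note, however, that the paper does not prove this lemma at all: it is imported verbatim as Corollary~5.1 of the cited lecture notes of Still, so there is no in-paper proof to compare against. What you have written is precisely the standard derivation of that corollary from Hoffman's error bound, and each step holds up: $\mathcal{T}(\tilde{\ulVar})$ is a nonempty (by Assumption~\ref{ass:compactness_LL}), closed, convex polyhedron, so the projection $\tilde{\llVar}$ of $\llVar$ exists and realizes the distance; Hoffman's bound with a constant $L_H$ depending only on the fixed matrix $D$ (and the norm) controls that distance by the residual $\norm{(D\llVar - (b - C\tilde{\ulVar}))_+}$; and the componentwise estimate $D\llVar - (b - C\tilde{\ulVar}) \leq C(\tilde{\ulVar}-\ulVar)$ together with monotonicity of the norm gives $\norm{(D\llVar - (b - C\tilde{\ulVar}))_+} \leq \norm{C(\ulVar-\tilde{\ulVar})}$, after which submultiplicativity finishes the chain. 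You also correctly isolate the only delicate point, namely the uniformity of $L_H$ in the right-hand side, which is exactly what the cited variant of Hoffman's lemma supplies. The only cosmetic caveat is that the norm inequality $\norm{u_+}\leq\norm{v}$ from $u\leq v$ componentwise uses that the norm is absolute/monotone, which holds for the Euclidean and other standard choices used here.
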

The scalar $L_H$ is the so-called Hoffman constant.
A sharp characterization of this constant and an algorithm to compute
it can be found in \cite{Li:1993,Pena:2018}.
Based on the introduced variant of the Hoffman Lemma, we can now
establish Lipschitz continuity of the optimal-value function $\varphi$
under Assumption \ref{ass:compactness_LL}.
Our proof follows the idea of the proof of Corollary~5.2
in~\cite{Still:2018}.
There, the Lipschitz continuity of the
optimal-value function of a linear program with right-hand side
perturbation is demonstrated.
In contrast to this, we have a quadratic program with right-hand side
perturbation.
\begin{theorem}
  Suppose Assumption~\ref{ass:compactness_LL} holds.
  Then, there exists $L > 0$ such that for any $\ulVar,\tilde{\ulVar}
  \in B(\lb{\ulVar}, \ub{\ulVar})$ it holds
  \begin{align*}
    | \varphi(\tilde{\ulVar}) - \varphi(\ulVar) |
    \leq
    L || \tilde{\ulVar} - \ulVar ||.
  \end{align*}
\end{theorem}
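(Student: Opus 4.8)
The plan is to mirror the proof of Corollary~5.2 in \cite{Still:2018}, but to replace the \emph{linear} objective used there by the quadratic function~$f(\llVar) = \frac{1}{2}\llVar^\top G_\llIdx \llVar + d_\llIdx^\top \llVar$. First I would fix two parameters $\ulVar, \tilde{\ulVar} \in B(\lb{\ulVar}, \ub{\ulVar})$. By Assumption~\ref{ass:compactness_LL} the set $\mathcal{T}(\ulVar)$ is nonempty, and as noted right after the assumption a minimizer $\llVar \in \mathcal{T}(\ulVar)$ of \Probref{prob:lower-level} exists, so that $\varphi(\ulVar) = f(\llVar)$.

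Next I would invoke Lemma~\ref{lemma:variant_Hoffman_lemma} to transport this minimizer onto the perturbed feasible set: there is a point $\tilde{\llVar} \in \mathcal{T}(\tilde{\ulVar})$ with $\norm{\tilde{\llVar} - \llVar} \leq L_H \norm{C}\,\norm{\tilde{\ulVar} - \ulVar}$. Since $\tilde{\llVar}$ is feasible for the parameter~$\tilde{\ulVar}$, optimality gives $\varphi(\tilde{\ulVar}) \leq f(\tilde{\llVar})$, and therefore
\begin{equation*}
  \varphi(\tilde{\ulVar}) - \varphi(\ulVar) \leq f(\tilde{\llVar}) - f(\llVar).
\end{equation*}

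The one place where the quadratic (rather than linear) nature of the objective enters is the estimate of $f(\tilde{\llVar}) - f(\llVar)$ by the displacement $\norm{\tilde{\llVar} - \llVar}$. Since $f$ is continuously differentiable and the box $B(\lb{\llVar}, \ub{\llVar})$ is convex and compact, $f$ is Lipschitz continuous on this box with the constant $L_f = \max_{\llVar \in B(\lb{\llVar}, \ub{\llVar})} \norm{G_\llIdx \llVar + d_\llIdx}$ from \eqref{eq:Lipschitz_constant_f}. Because the variable bounds have been folded into the constraint matrix~$C$, both $\llVar$ and $\tilde{\llVar}$ lie in $B(\lb{\llVar}, \ub{\llVar})$, so the segment joining them does as well, and I obtain $f(\tilde{\llVar}) - f(\llVar) \leq L_f \norm{\tilde{\llVar} - \llVar} \leq L_f L_H \norm{C}\,\norm{\tilde{\ulVar} - \ulVar}$. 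Chaining the inequalities yields $\varphi(\tilde{\ulVar}) - \varphi(\ulVar) \leq L_f L_H \norm{C}\,\norm{\tilde{\ulVar} - \ulVar}$.

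Finally, exchanging the roles of $\ulVar$ and $\tilde{\ulVar}$ gives the reverse inequality, and combining the two produces
\begin{equation*}
  \abs{\varphi(\tilde{\ulVar}) - \varphi(\ulVar)} \leq L \norm{\tilde{\ulVar} - \ulVar}, \qquad L \define L_f L_H \norm{C},
\end{equation*}
which is the claim. I expect the main obstacle to be the bookkeeping that guarantees both minimizers lie in the box on which $L_f$ is valid: this is exactly why incorporating the bounds on~$\llVar$ into~$C$ is useful, since it simultaneously keeps $\mathcal{T}(\ulVar)$ bounded (so that minimizers exist) and confines every feasible point to $B(\lb{\llVar}, \ub{\llVar})$, where the gradient bound $\norm{G_\llIdx \llVar + d_\llIdx} \leq L_f$ holds.
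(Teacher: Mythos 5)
Your proposal is correct and follows essentially the same route as the paper: both arguments transport the minimizer to the perturbed feasible set via the Hoffman-type Lemma~\ref{lemma:variant_Hoffman_lemma}, use optimality to reduce $\varphi(\tilde{\ulVar})-\varphi(\ulVar)$ to an objective-value difference, and then bound that difference by a gradient-norm bound times the displacement. The only (immaterial) difference is that you get $f(\tilde{\llVar})-f(\llVar)\leq L_f\norm{\tilde{\llVar}-\llVar}$ from the generic mean-value argument on the convex box with the ``fast'' constant $L_f$ of \eqref{eq:Lipschitz_constant_f}, whereas the paper expands the quadratic explicitly and uses the slightly tighter constant $L_{G_\llIdx}$ taken over $\bigcup_{\ulVar}\mathcal{T}(\ulVar)$; either constant establishes the claimed existence of $L$.
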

\begin{proof}
  Let any $\ulVar,\tilde{\ulVar} \in B(\lb{\ulVar}, \ub{\ulVar})$ be
  given.
  By Assumption~\ref{ass:compactness_LL}, minimizers $\llVar$ and
  $\tilde{\llVar}$ of Problem~\eqref{prob:lower-level} exist for
  $\ulVar$ and $\tilde{\ulVar}$, respectively.
  By Lemma~\ref{lemma:variant_Hoffman_lemma}, for $\llVar$ we can find
  a point $\hat{\llVar} \in \mathcal{T}(\tilde{\ulVar})$ such that
  \begin{align*}
    ||\hat{\llVar} - \llVar||
    \leq
    L_H ||C|| \, || \tilde{\ulVar} - \ulVar ||
  \end{align*}
  holds for some $L_H > 0$.
  Based on this, we can conclude
  \begingroup
  \allowdisplaybreaks
  \begin{align*}
   \varphi(\tilde{\ulVar}) &- \varphi(\ulVar)
    \\
    \leq \ & \frac{1}{2} \hat{\llVar}^\top G_\llIdx \hat{\llVar} +
             d_\llIdx^\top \hat{\llVar} - \left(\frac{1}{2} \llVar^\top
             G_\llIdx \llVar + d_\llIdx^\top \llVar \right)
    \\
    = \ & \frac{1}{2} \hat{\llVar}^\top G_\llIdx \hat{\llVar} -
             \frac{1}{2} \llVar^\top G_\llIdx \llVar + d_\llIdx^\top
             (\hat{\llVar} - \llVar)
    \\
    = \ & \frac{1}{2} \hat{\llVar}^\top G_\llIdx \hat{\llVar} -
             \frac{1}{2} \hat{\llVar}^\top G_\llIdx \llVar + \frac{1}{2}
             \hat{\llVar}^\top G_\llIdx \llVar
             - \frac{1}{2} \llVar^\top G_\llIdx \llVar  + \frac{1}{2}
             \llVar^\top G_\llIdx \hat{\llVar} - \frac{1}{2} \llVar^\top
             G_\llIdx \hat{\llVar}
             + d_\llIdx^\top (\hat{\llVar} - \llVar)
    \\
    = \ & \frac{1}{2} \hat{\llVar}^\top G_\llIdx (\hat{\llVar} - \llVar) +
             \frac{1}{2} \llVar^\top G_\llIdx (\hat{\llVar} - \llVar) +
             \frac{1}{2} \hat{\llVar}^\top G_\llIdx \llVar - \frac{1}{2}
             \llVar^\top G_\llIdx \hat{\llVar}
             + d_\llIdx^\top (\hat{\llVar} - \llVar)
    \\
    = \ & \frac{1}{2} \left( \hat{\llVar}^\top G_\llIdx + d_\llIdx^\top +
             \llVar^\top G_\llIdx + d_\llIdx^\top  \right)
             (\hat{\llVar} - \llVar)
             + \frac{1}{2} \llVar^\top G_\llIdx^\top \hat{\llVar} -
             \frac{1}{2} \llVar^\top G_\llIdx \hat{\llVar}
    \\
    \leq \ & \frac{1}{2} \left( ||G_\llIdx^\top \hat{\llVar}  + d_\llIdx|| +
             ||G_\llIdx^\top \llVar  + d_\llIdx||\right)  ||\hat{\llVar}
             - \llVar||
    \\
    \leq \ & L_{G_\llIdx}  ||\hat{\llVar} - \llVar||
    \\
    \leq \ & L_H  ||C||  L_{G_\llIdx}  ||\tilde{\ulVar} - \ulVar||,
  \end{align*}
  \endgroup
  where we use the symmetry of~$G_\llIdx$ and $L_{G_\llIdx} \define
  \max \defset{ ||  G_\llIdx  \llVar  + d_\llIdx|| }{ \ulVar \in
    B(\lb{\ulVar}, \ub{\ulVar}), \llVar \in \mathcal{T}(\ulVar) }$,
  which is well-defined due to Assumption~\ref{ass:compactness_LL}.

  Analogously, by Lemma~\ref{lemma:variant_Hoffman_lemma}, for
  $\tilde{\llVar}$ we can find a point $\hat{\llVar} \in
  \mathcal{T}(\ulVar)$ such that
  \begin{align*}
    ||\hat{\llVar} - \tilde{\llVar}||
    \leq L_H ||C|| \, ||\ulVar-\tilde{\ulVar}||
  \end{align*}
  holds for the same $L_H > 0$.
  With the same arguments as before, we obtain
  \begin{equation*}
    \varphi(\ulVar) - \varphi(\tilde{\ulVar})
    \leq
    L_H ||C|| L_{G_\llIdx} ||\tilde{\ulVar}-\ulVar||.
  \end{equation*}
  Consequently,
  \begin{align*}
    | \varphi(\tilde{\ulVar}) - \varphi(\ulVar) |
    \leq
    L_H ||C|| L_{G_\llIdx} ||\tilde{\ulVar}-\ulVar||
    \enifed L ||\tilde{\ulVar}-\ulVar||
  \end{align*}
  holds and the claim follows.
\end{proof}

\rev{Let us finally note that the presented method is not restricted
  to nonconvex but quadratic problems in the lower level in general.
  If one has knowledge about the Lipschitz constant of a nonconvex
  lower-level problem with more general nonlinearities, the method can
  be applied as it is explained in this section.}

\subsection{Implementation Details}
\label{sec:impl-deta}

In this section, we discuss some implementation details to clarify
how we modified and extended Algorithm~\ref{alg:mult-method} to get a
more tailored method for the considered bilevel setup.

\subsubsection{``Slow'' and ``Fast'' Method for $\varphi$}

Because $\mathcal{T}(\ulVar) \subseteq B(\lb{\llVar}, \ub{\llVar}) $
holds for all $\ulVar \in B(\lb{\ulVar}, \ub{\ulVar})$, we immediately
get
\begin{align*}
  L_{G_\llIdx} = \max_{\ulVar \in B(\lb{\ulVar}, \ub{\ulVar}),
  \llVar \in \mathcal{T}(\ulVar)} ||  G_\llIdx  \llVar  + d_\llIdx
  || \leq \max_{\llVar \in B(\lb{\llVar}, \ub{\llVar})} ||
  G_\llIdx  \llVar  + d_\llIdx ||.
\end{align*}
Since we have to compute $L_{G_\llIdx}$ for computing the Lipschitz
constant of $\varphi$, we can distinguish between the ``fast'' and the
``slow'' method for $\varphi$ as well.

\subsubsection{Additional Nonlinearities}
\label{sec:additional-nonlinearities}

Bilinear nonlinearities of the
form~$\ulVar_i \llVar_j$ in the lower- or upper-level objective
function can be easily reformulated to fit in our setup.
If such nonlinearities occur,
\eg, in the lower level, an additional variable $\llVar_k$ is
introduced in the lower level. Moreover, the constraint
$\llVar_k = \ulVar_i$ is added to the lower level, while
the nonlinear objective term $\ulVar_i \llVar_j$ is replaced by
the product~$\llVar_k\llVar_j$.
The resulting bilevel problem then fits in our setup.

\subsubsection{Box Filtering}
\label{sec:box-filtering}

Figure~\ref{fig:box_filtering} illustrates the case
that after splitting an initial bounding box, here $[0,3]^2$,
a few times, there might be boxes (such as $[2.25, 3]^2$) that do not
include any point from the bilevel constraint region, which is colored
in red in Figure~\ref{fig:box_filtering}.
To avoid further investigation of these boxes, we can detect these
boxes by checking if the intersection of the bilevel constraint region
with newly created boxes is empty.
However, this requires to solve an LP feasibility problem and thus
creates some additional computational effort.
\rev{The benefit, however, is that constraints
  \eqref{eq:mult-problem-master-mip-polytope-bigm-lower}--%
  \eqref{eq:mult-problem-master-mip-lipschitz-geq}
  and the corresponding binary variables are not added to the master
  problem for the filtered boxes in the given application. However, as
  the bilevel constraint region is also accounted for in the master
  problem, these filtered boxes are not further splited anyway, i.e.,
  the constraints and variables that are not added to the master
  problem would be redundant if added. Hence, the additional
  computational effort of box filtering should only be undertaken if
  necessary.}

\begin{figure}
  \centering
  \begin{tikzpicture}[scale=0.75]
  \begin{axis}[ymin=0, ymax=3.5, xmin=0, xmax=3.5, axis lines=middle, xtick={0, 1, 2, 3}, ytick={0, 1, 2, 3}, xlabel=$x$, ylabel=$y$, every axis x label/.style={
      at={(ticklabel* cs:1.00)},
      anchor=west,
    },
    every axis y label/.style={
      at={(ticklabel* cs:1.00)},
      anchor=south,
    },enlargelimits=true]
    \addplot [domain=0.0:2.0, samples=100, thick, color=red!50]
    {3 - 1 / 2 * x};
    \addplot [domain=2.0:3.0, samples=100, thick, color=red!50]
    {6 - 2 * x};
    \path [fill=red!10]
    (0,300) --
    (0,0) --
    (300,0) --
    (200,200) --
    (0,300);
    \path [fill=blue!10]
    (225,225) --
    (300,225) --
    (300,300) --
    (225,300) --
    (225,225);
    \path [fill=blue!10]
    (150,225) --
    (225,225) --
    (225,300) --
    (150,300) --
    (150,225);
    \path [fill=blue!10]
    (225,150) --
    (300,150) --
    (300,225) --
    (225,225) --
    (225,150);
    \draw[very thick, blue!50] (0,0) rectangle (300,300);
    \draw[dashed, very thick, blue!50] (150,0) -- (150,300);
    \draw[dashed, very thick, blue!50] (0,150) -- (300,150);
    \draw[dashed, very thick, blue!50] (225,150) -- (225,300);
    \draw[dashed, very thick, blue!50] (150,225) -- (300,225);
  \end{axis}
\end{tikzpicture}%
%
%
  \caption{Example for box filtering}
  \label{fig:box_filtering}
\end{figure}
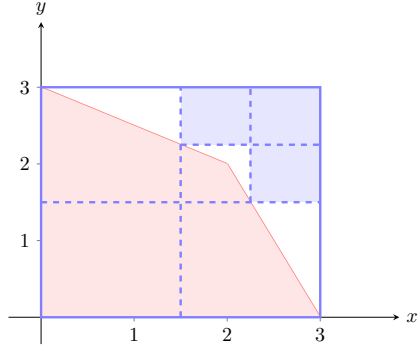

Note that box filtering is not necessary if there are no
lower-level and upper-level constraints except for simple variable
bounds since the intersection of the bilevel constraint
region and every possible box created by the algorithm can never be
empty.
\rev{In contrast, box filtering is necessary if the ``slow'' method is applied since, e.g., Problem~\eqref{eq:Lipschitz_constant_f} for computing the Lipschitz constant of the nonconvex function $f$ might be infeasible on newly created boxes. Thus, these boxes must be filtered after each box splitting and before Lipschitz constants are updated. This does not occur with the ``fast'' method and box filtering is therefore not necessary for this method.}

\subsubsection{Tighter Lipschitz Constants for Box-Constrained Lower
  Levels}
For instances with simple variable bounds on the lower
level that are not influenced by upper-level decisions,
we can compute tighter Lipschitz constants for $\varphi$.
To this end, we now explicitly take into account
bilinear terms of the form $\ulVar_i \llVar_j$ in the
lower-level objective function and do not reformulate
these terms
as described in Section~\ref{sec:additional-nonlinearities}.
Hence, the lower-level objective function is
given by
\begin{align*}
  f(\llVar, \ulVar) = \frac{1}{2}
  \begin{pmatrix}
    \llVar \\ \ulVar
  \end{pmatrix}^\top
  \begin{bmatrix}
    E & F^\top \\
    F & 0
  \end{bmatrix}
        \begin{pmatrix}
          \llVar \\ \ulVar
        \end{pmatrix}
  +
  \begin{pmatrix}
    e \\ 0
  \end{pmatrix}^\top
  \begin{pmatrix}
    \llVar \\ \ulVar
  \end{pmatrix},
\end{align*}
where $E$ and $F \neq 0$ are suitably chosen matrices and $e$ is a suitably
chosen vector.
Now, for given $\hat{\ulVar}, \tilde{\ulVar} \in B(\lb{\ulVar},
\ub{\ulVar})$, let $\hat{\llVar}$ and $\tilde{\llVar}$ be defined as
\begin{align}
  \label{eq:tighter_Lipschitz_constant}
  \hat{\llVar} \define \argmin_{\llVar \in B(\lb{\llVar},
  \ub{\llVar})} f(\llVar, \hat{\ulVar}),
  \quad
  \tilde{\llVar} \define \argmin_{\llVar \in B(\lb{\llVar},
  \ub{\llVar})} f(\llVar, \tilde{\ulVar}),
\end{align}
where $B(\lb{\llVar}, \ub{\llVar})$ is the lower level's feasible set
in this case.
Note that $B(\lb{\llVar}, \ub{\llVar}) = \mathcal{T}(\ulVar)$ holds
for all $\ulVar \in B(\lb{\ulVar}, \ub{\ulVar})$ because $\ulVar$ only
influences the lower-level objective function but not the lower
level's feasible set.
Thus, it holds
\begin{align*}
  \varphi(\hat{\ulVar}) - \varphi(\tilde{\ulVar}) =\ &
  f(\hat{\llVar},\hat{\ulVar}) - f(\tilde{\llVar}, \tilde{\ulVar})
  \leq
  f(\tilde{\llVar},\hat{\ulVar}) - f(\tilde{\llVar},
  \tilde{\ulVar})
  \\
  = \ &
  \tilde{\llVar}^\top F^\top \hat{\ulVar}  - \tilde{\llVar}^\top
    F^\top \tilde{\ulVar}
  \leq
  ||F \tilde{\llVar}|| \, || \hat{\ulVar} - \tilde{\ulVar} ||.
\end{align*}
Note that $f(\hat{\llVar},\hat{\ulVar}) \leq f(\tilde{\llVar},
\hat{\ulVar})$ holds since $\hat{\llVar}$ minimizes
$f(\llVar,\hat{\ulVar})$ over $B(\lb{\llVar}, \ub{\llVar})$ and
$\tilde{y} \in B(\lb{\llVar}, \ub{\llVar})$.
This is not necessarily true for the case of general lower-level
constraints, \ie, when optimizing
in~\eqref{eq:tighter_Lipschitz_constant} over
$\mathcal{T}(\hat{\ulVar})$ and $\mathcal{T}(\tilde{\ulVar})$,
respectively, since $\mathcal{T}(\hat{\ulVar}) \neq
\mathcal{T}(\tilde{\ulVar})$ and $\tilde{\llVar} \notin
\mathcal{T}(\hat{\ulVar})$ might hold.
Analogously, we obtain $\varphi(\tilde{\ulVar}) -
\varphi(\hat{\ulVar}) \leq ||F \hat{\llVar}|| \, || \hat{\ulVar} -
\tilde{\ulVar} ||$ and, consequently,
\begin{align*}
  | \varphi(\hat{\ulVar}) - \varphi(\tilde{\ulVar}) |
  \leq L_F || \hat{\ulVar} - \tilde{\ulVar} ||
\end{align*}
holds with $L_F \define \max \defset{||Fy||}{\llVar \in B(\lb{\llVar},
  \ub{\llVar})}$.
Thus, $L_F$ is a valid Lipschitz constant.

\subsubsection{Big-$M$s}

As a valid Big-$M$ in the master problem, we use the
maximum of $||\ub{\llVar} - \lb{\llVar}||_\infty$,
$||\ub{\ulVar} - \lb{\ulVar}||_\infty$, and
\begin{equation*}
  \max_{\llVar \in B(\lb{\llVar}, \ub{\llVar})} \frac{1}{2}
  \llVar^\top G_\llIdx \llVar + d_\llIdx \llVar
  -
  \min_{\llVar \in B(\lb{\llVar}, \ub{\llVar})} \frac{1}{2}
  \llVar^\top G_\llIdx \llVar + d_\llIdx \llVar.
\end{equation*}

\subsubsection{Lipschitz Constant Updates}

The Lipschitz constants are updated after each box splitting since it
is to be expected that the constants get smaller if they are computed
on smaller sets.

\subsection{Numerical Results}
\label{sec:numerical-results-bilevel}

In our computational study below, we consider the QP-QP instances
from the \BASBLib library \cite{BASBLib}.
We first describe which instances need to be excluded because they do
not fit into our setup.
First, we exclude the three instances \sfname{d\_1992\_01},
\sfname{b\_1984\_02}, and \sfname{dd\_2012\_02}
because they contain nonlinear constraints.
Next, the two instances \sfname{y\_1996\_02} and
\sfname{lmp\_1987\_01} are excluded due to nonconvex upper-level
objective functions, \ie, the matrix $H_\ulIdx$ is not positive
semidefinite for these instances.
The instance \sfname{sc\_1998\_01} is not considered because $C$ is
the zero matrix and, thus, the resulting optimization problem is not a
``true'' bilevel problem because the lower-level problem is not
constrained by upper-level variables.
Such problems can easily be solved by backwards induction\rev{, \ie, an optimal solution to the lower-level problem can first be determined and, given this lower-level optimal solution, the upper-level problem can be solved to optimality}.
Finally, we have to exclude the following four instances because they
violate Assumption~\ref{ass:compactness_LL}:
\begin{itemize}
\item \sfname{tmh\_2007\_01} (lower level is not feasible for $x=9$),
\item \sfname{b\_1988\_01} (lower level is not feasible for $x=9$),
\item \sfname{b\_1998\_07} (lower level is not feasible for $x=9$), and
\item \sfname{cw\_1990\_02} (lower level is not feasible for $x=7$).
\end{itemize}
In total, 10~instances (out of 20) remain; see
Table~\ref{tab:overview_instances}.
Note that, due to the applied reformulations as described in
Section~\ref{sec:additional-nonlinearities}, the reported number of
variables might differ from those reported in the \BASBLib.
Moreover, the reported number of constraints does not include
additional constraints necessary due to these reformulations.
Variable bounds are also not counted as constraints.

\begin{table}
  \caption{Overview of the considered \BASBLib instances.
    The number of lower- and upper-level constraints do not contain
    simple variable bounds and do not contain the additional
    constraints obtained due to the reformulation discussed in
    Section~\ref{sec:additional-nonlinearities}.
    Contrarily, the additional variables due to this
    reformulation are counted.}
  \label{tab:overview_instances}
  \begin{tabular}{lccccc}
  \toprule
  ID & $\dimUL$ & $\nrConstrUL$ & $\dimLL$ & $\nrConstrLL$ & $G_\llIdx
                                                             \succeq 0$?
  \\
  \midrule
  \sfname{as\_1981\_01} & 4 & 1 & 4 & 4 & yes \\
  \sfname{b\_1998\_02} & 2 & 0 & 3 & 0 & no\\
  \sfname{b\_1998\_03} & 2 & 0 & 3 & 0 & no \\
  \sfname{b\_1998\_04} & 1 & 0 & 2 & 0 & no \\
  \sfname{b\_1998\_05} & 1 & 0 & 2 & 0 & no \\
  \sfname{d\_1978\_01} & 2 & 0 & 4 & 0 & yes \\
  \sfname{d\_2000\_01} & 2 & 0 & 2 & 2 &  no\\
  \sfname{fl\_1995\_01} & 2 & 0 & 4 & 0 & yes \\
  \sfname{sa\_1981\_01} & 1 & 1 & 2 & 1 & yes \\
  \sfname{sa\_1981\_02} & 2 & 2 & 4 & 0 & yes \\
  \bottomrule
\end{tabular}%
%
%

\end{table}

We implemented the algorithm in \Python~3.7.9.
All computations were conducted on a machine with an \sfname{Intel(R)
  Core(TM) i7-8565U} CPU with $4$~cores, \SIrange{1.8}{4.6}{\giga\hertz},
and \SI{16}{\giga\byte}\,RAM.
The master problem~\eqref{eq:mult-problem-master} and the
subproblem~\eqref{eq:mult-problem-sub} are modeled using
\sfname{Pyomo}~5.7.2~\cite{hart2017pyomo} and solved with
\sfname{Gurobi}~9.1.0~\cite{gurobi}.
The Hoffman constant is computed with the algorithm described in
\cite{Pena:2018} using the \sfname{MATLAB} code made publicly
available by the authors of \cite{Pena:2018}.\footnote{The
  \sfname{MATLAB} code can be found at
  \url{http://www.andrew.cmu.edu/user/jfp/hoffman.html}.}
We set $\myeps = 10^{-1}$ and use a time limit of \SI{5}{\hour}.
If the instance is solved within the time limit, we
decrease~$\myeps$ by subsequently dividing by ten until
$\myeps = 10^{-5}$ is reached to see how much accuracy can be
reached by our algorithm in the given time limit.
The obtained results for the case of box filtering and determining
the Lipschitz constant with the ``slow'' method are summarized in
Table~\ref{tab:slow_filtering}, while the results for the ``fast''
method without box filtering are given in
Table~\ref{tab:fast_no_filtering}.
Note that Table~\ref{tab:slow_filtering} contains results for
4~instances while Table~\ref{tab:fast_no_filtering} contains results
for 10~instances.
The reason is that 6~instances only have simple variable bounds so
that there is no difference between the ``slow'' and the ``fast''
method \rev{ as well as between box filtering and no box filtering (except for the additional computational effort due to the LP feasibility problems solved in the former case, see Section~\ref{sec:box-filtering})}.
In these cases, we only list the respective instances in
Table~\ref{tab:fast_no_filtering}.
The tables are organized as follows.
The first column states the ID of the instance and the second one
states the used $\myeps$ for the termination criterion.
The number of required iterations is denoted by $k$, ``runtime''
states the runtimes in seconds, and the ``final $\myeps$'' column
contains the tolerance that is actually reached.
Finally, the columns ``diff to opt.'' and ``diff to opt.\ value''
contain the 2-norm distance of our solution to the one
reported in the \BASBLib and the respective difference
in the objective value.

\begin{table}
  \caption{Computational results with ``slow'' method and box
    filtering.
    Runtimes are given in seconds.}
  \label{tab:slow_filtering}
  \begin{tabular}{c%
 c%
 c%
 c%
 S[round-mode = places, round-precision = 3, table-format=2.3]%
 S[round-mode = places, round-precision = 3, table-format=1.3]%
 S[round-mode = places, round-precision = 3, table-format=2.3]%
}
\toprule
{ID}
 & {$\varepsilon$}
 & {$k$}
 & {runtime}
 & {final $\varepsilon$}
 & {diff to opt.}
 & {diff to opt.\ value}\\
\midrule
\sfname{as\_1981\_01} & $10^{-1}$ & 166 & 3862 & 0.092162 & 0.000 & 0.000\\
\sfname{as\_1981\_01} & $10^{-2}$ & 580 & 18\,000 & 0.014659 & 0.000 & 0.000\\
\midrule
\sfname{d\_2000\_01} & $10^{-1}$ & 37 & 117 & 0.075152 & 0.031250 & 0.000\\
\sfname{d\_2000\_01} & $10^{-2}$ & 64 & 209 & 0.009695 & 0.023739 & 0.000\\
\sfname{d\_2000\_01} & $10^{-3}$ & 95 & 377 & 0.000536 & 0.000466 & 0.000\\
\sfname{d\_2000\_01} & $10^{-4}$ & 123 & 513 & 0.000078 & 0.000059 & 0.000\\
\sfname{d\_2000\_01} & $10^{-5}$ & 151 & 692 & 0.000007 & 0.000010 & 0.000\\
\midrule
\sfname{sa\_1981\_01} & $10^{-1}$ & 2092 & 18\,000 & 0.285832 & 0.378363 & 3.980659\\
\midrule
\sfname{sa\_1981\_02} & $10^{-1}$ & 464 & 18\,000 & 14.862275 & 4.550069 & 89.157104\\
\bottomrule
\end{tabular}

%
%
%
\end{table}

\begin{table}
  \caption{Computational results with ``fast'' method and no box
    filtering.
    Runtimes are given in seconds.}
  \label{tab:fast_no_filtering}
  \begin{tabular}{c%
 c%
 c%
 c%
 S[round-mode = places, round-precision = 3, table-format=2.3]%
 S[round-mode = places, round-precision = 3, table-format=2.3]%
 S[round-mode = places, round-precision = 3, table-format=2.3]%
}
\toprule
{ID}
 & {$\varepsilon$}
 & {$k$}
 & {runtime}
 & {final $\varepsilon$}
 & {diff to opt.} & {diff to opt.\ value}\\
\midrule
\sfname{as\_1981\_01} & $10^{-1}$ & 560 & 18\,000 & 47.343750 & 10.440307 & 0.000\\
\midrule
\sfname{b\_1998\_02} & $10^{-1}$ & 4 & 1 & 0.025000 & 0.000 & 0.000\\
\sfname{b\_1998\_02} & $10^{-2}$ & 8 & 5 & 0.008984 & 0.004419 & -0.000005\\
\sfname{b\_1998\_02} & $10^{-3}$ & 16 & 11 & 0.000878 & 0.001924 & -0.000001\\
\sfname{b\_1998\_02} & $10^{-4}$ & 40 & 36 & 0.000098 & 0.000245 & 0.000\\
\sfname{b\_1998\_02} & $10^{-5}$ & 37 & 33 & 0.000010 & 0.000311 & 0.000\\
\midrule
\sfname{b\_1998\_03} & $10^{-1}$ & 5 & 3 & 0.036797 & 0.000 & 0.000\\
\sfname{b\_1998\_03} & $10^{-2}$ & 10 & 7 & 0.008355 & 0.000 & 0.000\\
\sfname{b\_1998\_03} & $10^{-3}$ & 14 & 8 & 0.000795 & 0.001139 & 0.000\\
\sfname{b\_1998\_03} & $10^{-4}$ & 27 & 22 & 0.000095 & 0.000237 & 0.000\\
\sfname{b\_1998\_03} & $10^{-5}$ & 84 & 102 & 0.000006 & 0.000205 & 0.000\\
\midrule
\sfname{b\_1998\_04} & $10^{-1}$ & 1180 & 8038 & 0.060597 & 0.020552 & 0.178423\\
\sfname{b\_1998\_04} & $10^{-2}$ & 1538 & 18\,000 & 0.058177 & 0.019288 & 0.149921\\
\midrule
\sfname{b\_1998\_05} & $10^{-1}$ & 1485 & 18\,000 & 0.476837 & 0.001544 & 0.002048\\
\midrule
\sfname{d\_1978\_01} & $10^{-1}$ & 100 & 237 & 0.058624 & 0.490697 & 0.356140\\
\sfname{d\_1978\_01} & $10^{-2}$ & 507 & 18\,000 & 0.029297 & 0.328590 & 0.236679\\
\midrule
\sfname{d\_2000\_01} & $10^{-1}$ & 179 & 193 & 0.097484 & 0.000393 & 0.000\\
\sfname{d\_2000\_01} & $10^{-2}$ & 136 & 132 & 0.006502 & 0.000497 & 0.000\\
\sfname{d\_2000\_01} & $10^{-3}$ & 177 & 272 & 0.000762 & 0.000022 & 0.000\\
\sfname{d\_2000\_01} & $10^{-4}$ & 206 & 357 & 0.000095 & 0.000009 & 0.000\\
\sfname{d\_2000\_01} & $10^{-5}$ & 385 & 1078 & 0.000006 & 0.000011 & 0.000\\
\midrule
\sfname{fl\_1995\_01} & $10^{-1}$ & 570 & 18\,000 & 0.122925 & 0.414284 & 0.855469\\
\midrule
\sfname{sa\_1981\_01} & $10^{-1}$ & 330 & 205 & 0.082368 & 0.202975 & 2.316284\\
\sfname{sa\_1981\_01} & $10^{-2}$ & 830 & 1221 & 0.008941 & 0.067658 & 0.778198\\
\sfname{sa\_1981\_01} & $10^{-3}$ & 3120 & 18\,000 & 0.001280 & 0.017596 & 0.202974\\
\midrule
\sfname{sa\_1981\_02} & $10^{-1}$ & 725 & 18\,000 & 0.826788 & 1.059649 & 20.994568\\
\bottomrule
\end{tabular}

\end{table}

Before we discuss the results in detail, let us comment on two
important aspects.
First, it can be expected that our method performs rather bad if
there are multiplicities.
To get a finer relaxation, all boxes covering the multiple solutions
have to be split at least once if $\myeps$-feasibility is not
reached by the first split.
Second, it is to be expected that our method performs rather good for
instances with small variable ranges, \eg, ranges such as
$[0,1]^\dimUL \times [0,1]^\dimLL$ instead of $[0,1000]^\dimUL \times
[0,1000]^\dimLL$, as well as small lower-level objective function
ranges.
In particular, the range of the lower level's objective function is
small in the case of small Lipschitz constants.
The Lipschitz constant derived here for the optimal-value function is
valid for all quadratic programs with right-hand side perturbations
that satisfy Assumption~\ref{ass:compactness_LL}.
However, for specific bilevel applications much tighter Lipschitz
constants might be derived by exploiting problem-specific structural
properties.

In what follows, we comment on the obtained computational
results in the light of these two aspect.
Multiplicities are reported for instance \sfname{as\_1981\_01}.
As can be seen in Table~\ref{tab:slow_filtering}, for
$\myeps = 10^{-2}$, this instance is not solved to
$\myeps$-feasibility within the time limit despite
the fact that an optimal solution is already reached.
This effect is even enhanced if box filtering is deactivated \rev{ and the ``fast'' method is used};
see Table~\ref{tab:fast_no_filtering}.
In this case, the final $\myeps$ is 47.3, which is very large.
As this instance has rather many general constraints, box filtering \rev{ with the ``slow'' method}
improves the solution process significantly.
However, for instances with less general constraints like
\sfname{sa\_1981\_01} and \sfname{sa\_1981\_02}, the
additional computational burden of the ``slow'' method and box
filtering can outweigh its advantages.

Finally, let us discuss the results in dependence of the tightness of
the Lipschitz constants and the size of the variable ranges.
To this end, we first order the considered instances by increasing
Lipschitz constants of the function $f$:
\sfname{b\_1998\_02}, \sfname{b\_1998\_03},
\sfname{d\_2000\_01}, \sfname{d\_1978\_01}, \sfname{fl\_1995\_01},
\sfname{sa\_1981\_02}, \sfname{as\_1981\_01}, \sfname{sa\_1981\_01},
\sfname{b\_1998\_04}, and \sfname{b\_1998\_05}.\footnote{In case of
identical Lipschitz constants, alphabetical sorting is used.
The ordering of the considered instances by increasing Lipschitz
constants of the optimal-value function $\varphi$ is slightly different:
\sfname{b\_1998\_02}, \sfname{b\_1998\_03},
\sfname{d\_1978\_01}, \sfname{fl\_1995\_01}, \sfname{sa\_1981\_02},
\sfname{d\_2000\_01}, \sfname{sa\_1981\_01}, \sfname{b\_1998\_04},
\sfname{b\_1998\_05}, and \sfname{as\_1981\_01}.}
Indeed, the three instances with lowest Lipschitz constants are solved
for the smallest~$\myeps$ within the time limit; see
Tables~\ref{tab:slow_filtering} and~\ref{tab:fast_no_filtering}.
In addition to these 3~instances, the ``fast'' method with no box
filtering also solves the instances \sfname{d\_1978\_01},
\sfname{sa\_1981\_01}, and \sfname{b\_1998\_04} at least for the
initial $\myeps$.
Besides having one of the lowest Lipschitz constants, the instance
\sfname{d\_1978\_01} has relatively low variable ranges.
The other two instances \sfname{sa\_1981\_01} and \sfname{b\_1998\_04}
have relatively low number of lower- and
upper-level variables, which leads to
reduced dimensions of the boxes
and therefore of the worst-case number of iterations.
Nevertheless, the disadvantage of the large Lipschitz constant of
\sfname{b\_1998\_04} is reflected in the results as for $\myeps =
10^{-1}$ already over \SI{2}{\hour} are needed to compute an
$\myeps$-feasible solution.

In total, our method solves 7 out of 10~instances.
Interestingly, 2 out of the 3 unsolved instances even have a convex
lower-level problem and could thus be solved with specialized methods
such as, \eg, \cite{Kleinert_et_al:2021b} that explicitly exploit this
property.
\rev{In addition, as our method only requires very weak assumptions and can be applied to a broad range of problems besides nonconvex bilevel problems, it cannot be expected that it outperforms specifically tailored methods like, e.g., the BASBL solver \cite{Paulavicius_et_al:2020}.}


\section{Application to Gas Network Optimization}
\label{sec:case-study-gas}

In this section, we use Algorithm~\ref{alg:mult-method} to solve
stationary gas network optimization problems.
We start by modeling the gas network and state an implicit nonlinear
pressure law function for gas flow in pipes.
We cannot state this function explicitly but it is possible to
evaluate it rather cheaply.
Then, we analyze its derivatives to derive suitable Lipschitz
constants.
Finally, numerical results on test instances show the successful
application of our method.

\subsection{Modeling}

We model the gas network as a directed and weakly connected
graph~$(\nodes, \arcs)$, where the arcs~$\arcs$ are composed of
pipes~$\pipes$, short pipes~$\shortPipes$, valves~$\valves$,
compressor stations~$\compressorStations$, and control
valves~$\controlValves$, \ie,
\begin{equation*}
  \arcs = \pipes \cup \shortPipes \cup \valves \cup
  \compressorStations \cup \controlValves.
\end{equation*}

The two main variables that describe the state of the gas flowing
through the network are the pressure $\press$ and mass flow $\mflow$.
Each node $\node \in \nodes$ has a bounded pressure variable
$\press_\node \in [\lb{\press}_\node, \ub{\press}_\node]$ and a given
mass flow $\mflow_\node$ that is supplied to or withdrawn from the
network.
A node $\node \in \nodes$ is called an entry node if $\mflow_\node >
0$, an exit node if $\mflow_\node < 0$, and an inner node if $\mflow_\node
= 0$.
In addition, each arc $\arc \in \arcs$ has a variable
$\mflow_\arc \in [\lb{\mflow}_\arc, \ub{\mflow}_\arc]$ that
models the mass flow in the arc.

The balance equation
\begin{equation}
  \label{eq:mass-balance}
  \mflow_\node + \sum_{\arc \in \inArcs} \mflow_\arc
  = \sum_{\arc \in \outArcs} \mflow_\arc
  \quad \text{for all } \node \in \nodes
\end{equation}
ensures that no gas is gained or lost.
Here, $\inArcs$ and $\outArcs$ denote the sets of in- and
outgoing arcs of node~$\node$.

A short pipe $\arc = (\node, \otherNode) \in \shortPipes$ directly
connects its nodes~$\node$ and $\otherNode$.
Therefore, the related pressure values coincide:
\begin{equation}
  \label{eq:short-pipe-pressure}
  \press_\node = \press_\otherNode
  \quad \text{for all } \arc = (\node, \otherNode) \in \shortPipes.
\end{equation}

A valve $\arc = (\node, \otherNode) \in \valves$ is either open or
closed.
If it is open, it is modeled as a short pipe.
If it is closed, the mass flow $\mflow_\arc$ has to be zero and the
related pressures are decoupled but the corresponding pressure
difference between the nodes $\node$ and $\otherNode$ cannot
exceed a given value~$\Delta \ub{\press}_\arc$.
This can be modeled by introducing a binary variable $\valveOpen_\arc$
that indicates if the valve $\arc$ is open ($\valveOpen_\arc = 1$) or
closed ($\valveOpen_\arc = 0$).
The valve model then reads
\begin{subequations}
  \label{eq:valve-model}
  \begin{align}
    \mflow_\arc
    & \geq \lb{\mflow}_\arc \valveOpen_\arc
    & \text{for all } \arc \in \valves,
    \\
    \mflow_\arc
    & \leq \ub{\mflow}_\arc \valveOpen_\arc
    & \text{for all } \arc \in \valves,
    \\
    \press_\node - \press_\otherNode
    & \leq \Delta \ub{\press}_\arc (1 - \valveOpen_\arc)
    & \text{for all } \arc = (\node, \otherNode) \in \valves,
    \\
    \press_\otherNode - \press_\node
    & \leq \Delta \ub{\press}_\arc (1 - \valveOpen_\arc)
    & \text{for all } \arc = (\node, \otherNode) \in \valves.
  \end{align}
\end{subequations}

Compressor stations $\arc \in \compressorStations$ have a fixed flow
direction, \ie, $\lb{\mflow}_\arc \geq 0$ holds, and can increase the
pressure of the gas. We model this pressure increase by introducing
the variable~$\Delta \press_\arc \in [0, \Delta \ub{\press}_\arc]$.
Then, the compressor station model is given by
\begin{equation}
  \label{eq:compressor-station-pressure}
  \press_\otherNode = \press_\node + \Delta \press_\arc
  \quad \text{for all } \arc = (\node, \otherNode) \in
  \compressorStations.
\end{equation}
Note that this model is a significant simplification of how
compressor stations in real gas networks operate.
More complicated and realistic models can be found
in, \eg, \cite{Pfetsch_et_al:2015,Rose_et_al:2016}.

Control valves $\arc \in \controlValves$ are modeled similar to
compressor stations but decrease the gas pressure instead of
increasing it.
We again have $\lb{\mflow}_\arc \geq 0$ and
\begin{equation}
  \label{eq:control-valve-pressure}
  \press_\otherNode = \press_\node - \Delta \press_\arc
  \quad \text{for all } \arc = (\node, \otherNode) \in \controlValves
\end{equation}
with $\Delta \press_\arc \in [0, \Delta \ub{\press}_\arc]$.

Until now, we have modeled all gas network components in a
(mixed-integer) linear way.
The only components that are still missing are pipes~$\arc \in
\pipes$.
We will describe the pressure loss in a pipe in dependence of the
inflow pressure and the mass flow using a nonlinear and Lipschitz
continuous function
\begin{equation}
  \label{eq:gas-equation}
  \press_\otherNode = \press_{\otherNode, \arc}(\press_\node,
  \mflow_\arc) \quad \text{for all } \arc = (\node, \otherNode) \in
  \pipes,
\end{equation}
which we will analyze in the next section.

The goal is to minimize the overall activity of the compressor
stations.
Therefore, the full stationary gas network optimization problem is
given by
\begingroup
\allowdisplaybreaks
\begin{align*}
  \min \quad & \sum_{\arc \in \compressorStations} \Delta \press_\arc\\
  \st \quad & \text{mass balance } \eqref{eq:mass-balance},\\
             & \text{short pipe model } \eqref{eq:short-pipe-pressure},\\
             & \text{valve model } \eqref{eq:valve-model},\\
             & \text{compressor station model } \eqref{eq:compressor-station-pressure},\\
             & \text{control valve model } \eqref{eq:control-valve-pressure},\\
             & \text{pipe model } \eqref{eq:gas-equation},\\
             & \press_\node \in [\lb{\press}_\node, \ub{\press}_\node]
               \quad \text{for all } \node \in \nodes,\\
             & \mflow_\arc \in [\lb{\mflow}_\arc, \ub{\mflow}_\arc]
               \quad \text{for all } \arc \in \arcs,\\
             & \Delta \press_\arc \in [0, \Delta \ub{\press}_\arc]
               \quad \text{for all } \arc \in \compressorStations \cup
               \controlValves,\\
             & \valveOpen_\arc \in \set{0, 1}
               \quad \text{for all } \arc \in \valves.
\end{align*}
\endgroup
This model has the form of Problem~\eqref{eq:mult-problem}.
The pipe equations~\eqref{eq:gas-equation} constitute the Lipschitz
nonlinearities~\eqref{eq:mult-problem-nonlin} while the other
constraints fit~\eqref{eq:mult-problem-lin-and-vars} as they are
linear.
Hence, we can use Algorithm~\ref{alg:mult-method} to solve it.

\rev{In contrast to the gas network model considered
  in~\cite{Schmidt_et_al:2019}, we do not restrict ourselves to
  tree-structured networks.
  This has the consequence that the mass flows in the network cannot be
  pre-computed.
  Therefore, the nonlinearity on each arc is multivariate as it
  depends on the pressure and the mass flow.
  This means that Algorithm~\ref{alg:mult-method} can be applied to a
  much broader class of gas transport models.}

\subsection{Lipschitz Continuity of the Gas Flow Equation}

In this section, we derive and analyze the nonlinear pipe
model~\eqref{eq:gas-equation}.
For the sake of readability, we henceforth omit the subscript $\arc$
that indicates the pipe $\arc \in \pipes$.

Gas flow along a pipe can be modeled by the stationary momentum
equation.
This ordinary differential equation (ODE) reads
\begin{equation*}
  \dparshort{x}{\left(\press + \frac{\mflux^2}{\dens}\right)} = -\frac{1}{2}
  \nfric \frac{\mflux \abs{\mflux}}{\dens},
  \quad \mflux = \density \velocity,
  \quad \nfric = \frac{\friction}{\diameter}
\end{equation*}
where $\press$, $\velocity$, $\mflux$, and $\dens$ model the pressure,
velocity, mass flux, and density of the gas and~$\friction$ as well as
$\diameter$ denote the pipe's friction coefficient and the diameter of the
pipe; see, \eg, \cite{Gugat_et_al:2018b}.
The relation between the mass flow~$\mflow$ and mass flux~$\mflux$ is
given by $\mflow = \area \mflux$ where~$A = \pi \diameter^2 / 4$ is the
cross-sectional area of the pipe.

The pressure $\press$ and density $\dens$ are coupled by the equation
of state
\begin{equation}
  \label{eq:state}
  \press = \specificGasConstant \temp \compfactor \dens
\end{equation}
for real gas, where $\specificGasConstant$ denotes the specific gas
constant.
The compressibility factor $\compfactor$ can be computed by the
so-called AGA formula
\begin{equation*}
  \compfactor = 1 + \alpha \press,\quad \alpha = 0.257
  \frac{1}{\pseudocriticalPressure} - 0.533
  \frac{\pseudocriticalTemperature}{\pseudocriticalPressure \temp} < 0
\end{equation*}
with pseudocritical pressure $\pseudocriticalPressure$, pseudocritical
temperature $\pseudocriticalTemperature$, and temperature~$\temp$ that we
assume to be constant; see, \eg, \cite{Kralik_et_al:1988}.
We only consider a positive compressibility factor, which is equivalent
to
\begin{equation}
  \label{eq:press_upper_bound}
  \press < \frac{1}{\abs{\alpha}}.
\end{equation}
We further introduce the speed of sound~$\soundspeed$ which is defined
via
\begin{equation*}
  \frac{1}{\soundspeed^2} = \dparlong{\pressure}{\density}
\end{equation*}
and the squared mach number
\begin{equation*}
  \mach = \frac{\velocity^2}{\soundspeed^2}.
\end{equation*}
\begin{lemma}
  It holds
  \begin{equation*}
    \mach = \specificGasConstant \temp \frac{\mflux^2}{\press^2}.
  \end{equation*}
\end{lemma}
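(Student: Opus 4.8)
The plan is to turn the defining relations for $\soundspeed$ and $\mach$ into a direct algebraic computation by first eliminating the density $\dens$ and the velocity $\velocity$ in favour of the pressure $\press$ and the mass flux $\mflux$. The only ingredients needed are the equation of state~\eqref{eq:state} together with the AGA closure $\compfactor = 1 + \alpha\press$, the defining relation $\tfrac{1}{\soundspeed^2} = \dparlong{\pressure}{\density}$, the identity $\mflux = \dens\velocity$, and the definition $\mach = \velocity^2/\soundspeed^2$.

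First I would solve the equation of state for the density. Substituting $\compfactor = 1+\alpha\press$ into $\press = \specificGasConstant\temp\compfactor\dens$ and rearranging yields
\begin{equation*}
  \dens = \frac{\press}{\specificGasConstant\temp\,(1 + \alpha\press)},
\end{equation*}
which is well defined on the admissible pressure range, since $1+\alpha\press = \compfactor > 0$ by~\eqref{eq:press_upper_bound} and $\temp$ is held constant. Differentiating this expression with respect to $\press$ (a single-variable quotient rule) and cancelling the two $\alpha\press$ terms appearing in the numerator gives
\begin{equation*}
  \frac{1}{\soundspeed^2}
  = \dparlong{\pressure}{\density}
  = \frac{1}{\specificGasConstant\temp\,(1+\alpha\press)^2},
  \quad\text{hence}\quad
  \soundspeed^2 = \specificGasConstant\temp\,(1+\alpha\press)^2.
\end{equation*}

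Finally I would assemble the squared Mach number. Writing $\velocity = \mflux/\dens$ gives $\mach = \velocity^2/\soundspeed^2 = \mflux^2/(\dens^2\soundspeed^2)$, and substituting the two expressions just obtained for $\dens$ and $\soundspeed^2$ shows that $\dens^2\soundspeed^2 = \press^2/(\specificGasConstant\temp)$, from which the claimed identity $\mach = \specificGasConstant\temp\,\mflux^2/\press^2$ drops out at once.

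I do not expect a genuine obstacle here; the computation is elementary. The one place to stay careful is the \emph{direction} of the derivative: the speed of sound is defined through $\partial\dens/\partial\press$ rather than $\partial\press/\partial\dens$, so it is cleaner to keep $\dens$ as an explicit function of $\press$ throughout, as above, rather than to differentiate the state equation implicitly. It is also worth invoking the sign restriction~\eqref{eq:press_upper_bound} explicitly to guarantee that $1+\alpha\press>0$, so that all quantities are positive and the divisions performed along the way are legitimate.
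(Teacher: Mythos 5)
Your proposal is correct and follows essentially the same route as the paper: solve the equation of state for $\dens$ as a function of $\press$, differentiate to get $1/\soundspeed^2 = \partial\dens/\partial\press = 1/(\specificGasConstant\temp\compfactor^2)$, and combine with $\velocity = \mflux/\dens$ to obtain the claim. The only cosmetic difference is that you isolate $\soundspeed^2$ as an intermediate quantity, whereas the paper assembles everything in a single chain of equalities.
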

\begin{proof}
  We solve the equation of state \eqref{eq:state} for $\dens$ and
  obtain
  \begin{equation*}
    \dens = \frac{\press}{\specificGasConstant \temp \compfactor}.
  \end{equation*}
  We can use this and the definition of the speed of sound to get
  \begin{align*}
    \mach & = \velocity^2 \frac{1}{\soundspeed^2}
    = \left(\frac{\mflux}{\dens}\right)^2 \dparlong{\pressure}{\density}
    = \left(\frac{\mflux \specificGasConstant \temp
    \compfactor}{\press}\right)^2
    \frac{\specificGasConstant \temp \compfactor - \press \specificGasConstant
    \temp \alpha}{(\specificGasConstant \temp \compfactor)^2}\\
    & = \left(\frac{\mflux \specificGasConstant \temp
    \compfactor}{\press}\right)^2
    \frac{1}{\specificGasConstant \temp \compfactor^2}
    = \specificGasConstant \temp \frac{\mflux^2}{\press^2}. \qedhere
  \end{align*}
\end{proof}
We assume that the velocity of the gas is subsonic, \ie,
$\mach < 1$ holds, as it is the case for real-world gas networks.
This is equivalent to
\begin{equation}
  \label{eq:press_lower_bound}
  \press > \abs{\mflux} \sqrt{\specificGasConstant \temp}.
\end{equation}
In what follows, we only consider pressures within the interval
$(\abs{\mflux} \sqrt{\specificGasConstant \temp},
1 / \abs{\alpha})$.
Therefore, we have
\begin{equation*}
  \plb > 0 \quad \text{and} \quad \pub > 0,
\end{equation*}
which we will use many times throughout this section.

For a pipe $(\node, \otherNode)$ with length~$\length$ and pressure
$\press_\node$ at node~$\node$, the pressure function reads
\begin{subequations}
\begin{align}
  \label{eq:pressure_function}
  \press(x, \press_\node, \mflux)
  & = F^{-1}\left(F(\press_\node) - \frac{1}{2}
    \specificGasConstant \temp \mflux \abs{\mflux} \nfric x\right),
  \\
  \label{eq:pressure_function_F}
  F(\press)
  & = \frac{1}{\alpha} \press + \left(\mflux^2
    \specificGasConstant \temp - \frac{1}{\alpha^2}\right) \ln(\abs{1+\alpha
    \press}) - \mflux^2 \specificGasConstant \temp \ln(\press),
\end{align}
\end{subequations}
for $x\in[0, \length]$; see
\cite{Gugat_et_al:2018c,Gugat_et_al:2018b}.
From \cite{Gugat_et_al:2018c}, we further know the following
properties of $F$.
\begin{lemma}
  The function $F$ as defined in~\eqref{eq:pressure_function_F} is
  differentiable for $\press \in (\abs{\mflux} \sqrt{\specificGasConstant
  \temp}, 1 / \abs{\alpha})$ with
  \begin{equation}
    \label{eq:pressure_function_F_deriv_p}
    F'(\press) = \frac{\press^2 - \mflux^2 \specificGasConstant
      \temp}{\press (1 + \alpha \press)} > 0.
  \end{equation}
  The second derivative fulfills
  \begin{equation*}
    F''(\press) = \frac{\press^2 +
    \mflux^2 \specificGasConstant \temp (1 + 2 \alpha \press)}{\press^2 (1 +
    \alpha p)^2} > 0.
  \end{equation*}
\end{lemma}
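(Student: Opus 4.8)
The plan is to verify both identities by direct differentiation and then read off the signs from the a priori bounds on $\press$ established above, namely $\plb > 0$ and $\pub > 0$ on the admissible interval $(\abs{\mflux}\sqrt{\specificGasConstant \temp}, 1/\abs{\alpha})$. Since on this open interval we have both $\press > 0$ and $1 + \alpha \press > 0$, the term $\ln(\abs{1 + \alpha \press})$ equals $\ln(1 + \alpha \press)$ and, together with $\ln(\press)$, is differentiable there. Hence $F$ is differentiable on the interval and termwise differentiation of \eqref{eq:pressure_function_F} is justified.

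First I would differentiate \eqref{eq:pressure_function_F} term by term to obtain
\begin{equation*}
  F'(\press) = \frac{1}{\alpha} + \frac{\alpha \mflux^2 \specificGasConstant \temp - \alpha^{-1}}{1 + \alpha \press} - \frac{\mflux^2 \specificGasConstant \temp}{\press}.
\end{equation*}
Bringing this over the common denominator $\press(1 + \alpha \press)$, the terms proportional to $\press/\alpha$ and the terms proportional to $\alpha \mflux^2 \specificGasConstant \temp \press$ cancel pairwise, leaving exactly the numerator $\press^2 - \mflux^2 \specificGasConstant \temp$. This reproduces the claimed closed form \eqref{eq:pressure_function_F_deriv_p}. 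Its positivity is then immediate: the numerator is $\plb > 0$ and the denominator equals $\press \cdot \pub > 0$ on the admissible interval.

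For the second derivative I would apply the quotient rule to the closed form $F'(\press) = (\press^2 - \mflux^2 \specificGasConstant \temp)/(\press + \alpha \press^2)$, writing $N = \press^2 - \mflux^2 \specificGasConstant \temp$ and $D = \press + \alpha \press^2$ with $N' = 2\press$ and $D' = 1 + 2\alpha \press$. In the numerator $N'D - ND'$ the cubic contributions $2\alpha \press^3$ cancel, and the expression collapses to $\press^2 + \mflux^2 \specificGasConstant \temp(1 + 2\alpha \press)$, which, divided by $D^2 = \press^2(1 + \alpha \press)^2$, is exactly the stated formula for $F''$.

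The one step that is not purely mechanical is the positivity of $F''$, because the factor $1 + 2\alpha \press$ may be negative: recall $\alpha < 0$ with $\alpha \press \in (-1, 0)$, so $1 + 2\alpha \press$ changes sign at $\alpha \press = -1/2$. The key observation I would use is the regrouping
\begin{equation*}
  \press^2 + \mflux^2 \specificGasConstant \temp (1 + 2\alpha \press) = (\press^2 - \mflux^2 \specificGasConstant \temp) + 2 \mflux^2 \specificGasConstant \temp (1 + \alpha \press),
\end{equation*}
which exhibits the numerator as the sum of the strictly positive quantity $\plb$ and the nonnegative quantity $2\mflux^2 \specificGasConstant \temp \cdot \pub$. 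As the denominator $\press^2(1 + \alpha \press)^2$ is a square and thus positive, $F''(\press) > 0$ follows on the whole admissible interval, completing the proof.
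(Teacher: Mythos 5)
Your proof is correct. Note that the paper itself gives no proof of this lemma --- it is imported verbatim from Gugat et al.\ (2018), so there is no in-paper argument to compare against; your computation serves as a self-contained verification. Both differentiations check out: the cross terms $\press/\alpha$ and $\alpha \mflux^2 \specificGasConstant \temp\, \press$ do cancel in the numerator of $F'$, and the cubic terms cancel in $N'D - ND'$ for $F''$. The only genuinely non-mechanical point, the sign of the $F''$ numerator despite the possibly negative factor $1 + 2\alpha\press$, you resolve with the regrouping $\press^2 + \mflux^2 \specificGasConstant \temp (1 + 2\alpha\press) = (\plb) + 2\mflux^2 \specificGasConstant \temp\,(\pub)$, which is exactly the identity the paper itself invokes later in \eqref{eq:two_alpha_press_1} when proving Lemma~\ref{thm:pressure_function_deriv_pp}; so your argument is fully consistent with the paper's conventions and the standing bounds \eqref{eq:press_upper_bound} and \eqref{eq:press_lower_bound}.
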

The property in~\eqref{eq:pressure_function_F_deriv_p} implies that
$F$ is strictly increasing.
Therefore, the inverse in~\eqref{eq:pressure_function} is well-defined.
To evaluate the pressure function $\press$ in
\eqref{eq:pressure_function}, the equation
\begin{equation}
  \label{eq:pressure_equation}
  F(\press) = F(\press_\node) - \frac{1}{2}
  \specificGasConstant \temp \mflux \abs{\mflux} \nfric x
\end{equation}
needs to be solved.
This can be done numerically using Newton's method since $F$ is
strictly increasing and convex; see \cite{Gugat_et_al:2018c}.
In the same way, \eqref{eq:pressure_equation} can be solved for
$\press_\node$ if $\press$ and $\mflux$ are given.

We are interested in the pressure at the end of the pipe, \ie, for $x
= \length$, and need to find a Lipschitz constant for
\begin{equation*}
  \press_\otherNode
  =
  \press_\otherNode(\press_\node, \mflux)
  \define
  \press(\length, \press_\node, \mflux).
\end{equation*}
To this end, we make the following assumption.
\begin{assumption}
  \label{thm:assumption-gas-bounds}
We assume that the variables $\mflux$ and $\press_\node$ of each pipe
$(\node, \otherNode)$ are bounded by
\begin{equation*}
  \mflux \in [\lb{\mflux}, \ub{\mflux}] = [\lb{\mflow} / A,
  \ub{\mflow} / A],
  \quad \press_\node \in [\lb{\press}_\node, \ub{\press}_\node]
  \subset \left(\abs{\mflux} \sqrt{\specificGasConstant \temp},
  \frac{1}{\abs{\alpha}}\right)
\end{equation*}
with
\begin{equation*}
  \press_\otherNode(\press_\node, \mflux) \in \left(\abs{\mflux}
    \sqrt{\specificGasConstant \temp},
  \frac{1}{\abs{\alpha}}\right) \quad \text{for all } (\press_\node, \mflux) \in
\feasset \define [\lb{\press}_\node, \ub{\press}_\node] \times [\lb{\mflux},
\ub{\mflux}].
\end{equation*}
\end{assumption}
The following lemma guarantees the Lipschitz continuity of
$\press_\otherNode(\press_\node, \mflux)$ on~$\feasset$.
\begin{lemma}
  \label{thm:multi-dim-lipschitz}
  Let $f:\feasset\to \R$ be a partially differentiable function on a
  compact and convex subset $\feasset \subset \R^d$ with $d \in \N$.
  Then, $f$ is Lipschitz continuous on $\feasset$ with Lipschitz constant $L = 1$ \wrt\ the weighted
  $1$-norm
  \begin{equation}
    \label{eq:weighted-norm}
    \norm{\tilde{x}}_{w} \define \sum_{i = 1}^d
    \Abs{\tilde{x}_i} w_i
  \end{equation}
  with positive weights $w \in \R^d$ and $w_i \geq \max_{x \in \feasset}
  \Abs{\dparshort{x_i}{f(x)}}$ for all $i \in [d]$.
\end{lemma}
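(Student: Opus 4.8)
The statement is exactly the bound one obtains by integrating the directional derivative of $f$ along straight segments, so the plan is to reduce the $d$-dimensional claim to the one-dimensional mean value theorem and then estimate the arising partial derivatives by the weights~$w_i$. First I would fix two arbitrary points $x, y \in \feasset$. Since $\feasset$ is convex, the whole segment $\gamma(t) \define x + t(y - x)$ stays in $\feasset$ for every $t \in [0,1]$, so the scalar function $\phi(t) \define f(\gamma(t))$ is well-defined on $[0,1]$.

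The core step is to differentiate $\phi$ by the chain rule,
\begin{equation*}
  \phi'(t) = \sum_{i = 1}^d \dparshort{x_i}{f(\gamma(t))}\, (y_i - x_i),
\end{equation*}
and to apply the one-dimensional mean value theorem to $\phi$ on $[0,1]$. This produces a point $\xi \define \gamma(t^*)$ on the segment with $f(y) - f(x) = \phi(1) - \phi(0) = \sum_{i=1}^d \dparshort{x_i}{f(\xi)}\,(y_i - x_i)$. Taking absolute values, using the triangle inequality, and bounding each partial derivative by $\Abs{\dparshort{x_i}{f(\xi)}} \leq \max_{z \in \feasset} \Abs{\dparshort{x_i}{f(z)}} \leq w_i$, I obtain
\begin{equation*}
  \Abs{f(y) - f(x)}
  \leq \sum_{i=1}^d \Abs{\dparshort{x_i}{f(\xi)}}\, \Abs{y_i - x_i}
  \leq \sum_{i=1}^d w_i \Abs{y_i - x_i}
  = \norm{y - x}_w,
\end{equation*}
which is precisely Lipschitz continuity with constant $L = 1$ with respect to $\norm{\cdot}_w$.

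The one point that needs care is the chain-rule identity for $\phi'$: it presupposes that $f$ is \emph{totally} differentiable along the segment, whereas the hypothesis only names the partial derivatives, and partial differentiability alone does not even imply continuity. The way I would close this gap is to observe that the hypothesis is in fact slightly stronger than it reads: writing $\max_{z \in \feasset}\Abs{\dparshort{x_i}{f(z)}}$ (an attained maximum, not a supremum) presupposes continuity of the partial derivatives on the compact set $\feasset$, i.e.\ $f \in C^1$, and continuity of the partials is exactly the classical sufficient condition for total differentiability. Hence the chain rule is legitimate, and I would make this observation explicit. For the application the domain $\feasset = [\lb{\press}_\node, \ub{\press}_\node] \times [\lb{\mflux}, \ub{\mflux}]$ is a box, so as an alternative one can avoid total differentiability altogether: connect $x$ and $y$ by an axis-aligned staircase path, which remains inside the box, apply the one-dimensional mean value theorem on each leg (where only the single partial $\dparshort{x_i}{f}$ appears), and sum the per-leg estimates $w_i \Abs{y_i - x_i}$ to recover the same constant~$1$.
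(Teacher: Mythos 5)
Your proof is correct and follows essentially the same route as the paper's: parametrize the segment between the two points, differentiate the composition by the chain rule, and bound each partial derivative by the corresponding weight $w_i$; the only difference is that you invoke the one-dimensional mean value theorem at a single intermediate point where the paper integrates $g'(\lambda)$ via the fundamental theorem of calculus, which is immaterial. Your observation that mere partial differentiability does not license the chain rule is well taken---the paper's own proof silently uses total differentiability in the step $g'(\lambda) = (\tilde{x}-\tilde{y})^\top \nabla f(\cdot)$ as well---and both of your fixes (reading the attained maxima as implicitly assuming continuous partials, hence $C^1$, or using an axis-aligned staircase path in the box-domain application) legitimately close that gap.
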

\begin{proof}
  Let $\tilde{x}, \tilde{y} \in \feasset$.
  We define the auxiliary function $g:[0,1] \to \R$ as $g(\lambda)
  \define f(\lambda \tilde{x} + (1-\lambda) \tilde{y})$.
  Now, we use the fundamental theorem of calculus to prove the claim:
  \begin{align*}
    \abs{f(\tilde{x}) - f(\tilde{y})} & = \abs{g(1) - g(0)}\\
    & = \Abs{\int_0^1 g^\prime(\lambda) \diff \lambda}\\
    & = \Abs{\int_0^1 (\tilde{x} - \tilde{y})^\top \nabla f(\lambda \tilde{x} + (1-\lambda) \tilde{y}) \diff \lambda}\\
    & = \Abs{\int_0^1 \sum_{i = 1}^d (\tilde{x}_i - \tilde{y}_i) \dparshort{x_i}{f(\lambda \tilde{x} + (1-\lambda) \tilde{y})} \diff \lambda}\\
    & \leq \int_0^1 \sum_{i = 1}^d \Abs{(\tilde{x}_i - \tilde{y}_i) \dparshort{x_i}{f(\lambda \tilde{x} + (1-\lambda) \tilde{y})}} \diff \lambda\\
    & \leq \int_0^1 \sum_{i = 1}^d \Abs{\tilde{x}_i - \tilde{y}_i} \max_{x\in \feasset} \Abs{\dparshort{x_i}{f(x)}} \diff \lambda\\
    & \leq \Norm{\tilde{x}_i - \tilde{y}_i}_{\mathrm{w}}.\qedhere
  \end{align*}
\end{proof}
Using this weighted $1$-norm allows to get tighter bounds
in~\eqref{eq:box-bounds-rhs} compared to the usual $1$-norm.
To actually compute this weighted norm one could solve
$\max_{x\in \feasset} \Abs{\dparshort{x_i}{f(x)}}$, which is an NLP
for each $i \in [d]$.
In the case of Algorithm~\ref{alg:mult-method},
the set $\feasset$ will always be a box.
For the function $\press_\otherNode(\press_\node, \mflux)$, we
give the optimal solution or at least a suitable upper bound of these
NLPs \rev{in the following two theorems}.
\begin{theorem}
  \label{thm:max-deriv-p}
  If Assumption~\ref{thm:assumption-gas-bounds} holds, the
  derivative~$\dparshort{\press_\node}{\press_\otherNode}(\press_\node,
  \mflux) > 0$ attains its maximum on a given box $\emptyset \neq
  [\press_\node^-, \press_\node^+] \times [\mflux^-, \mflux^+]
  \subseteq [\lb{\press}_\node, \ub{\press}_\node] \times
  [\lb{\mflux}, \ub{\mflux}]$
  in $(\press_\node^-, \mflux^+)$ if $\mflux^+ \geq 0$, or
  in $(\press_\node^+, \mflux^+)$ else.
\end{theorem}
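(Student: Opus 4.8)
The plan is to reduce the two–dimensional maximization to two one–dimensional monotonicity statements after first obtaining a workable formula for the derivative. Since the end pressure is defined implicitly by $F(\press_\otherNode) = F(\press_\node) - \beta(\mflux)$ with $\beta(\mflux) \define \frac{1}{2}\RsT\mflux\abs{\mflux}\nfric\length$ and $F = F(\cdot\,;\mflux)$, differentiating in $\press_\node$ at fixed $\mflux$ and using $F' > 0$ from~\eqref{eq:pressure_function_F_deriv_p} gives
\begin{equation*}
  \dparshort{\press_\node}{\press_\otherNode}(\press_\node, \mflux)
  = \frac{F'(\press_\node)}{F'(\press_\otherNode)} > 0,
\end{equation*}
so positivity is immediate. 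It then suffices to prove (M1): for fixed $\mflux$ the quotient $F'(\press_\node)/F'(\press_\otherNode)$ is decreasing in $\press_\node$ if $\mflux > 0$, increasing if $\mflux < 0$, and constant if $\mflux = 0$; and (M2): for fixed $\press_\node$ it is nondecreasing in $\mflux$. Granting both, (M2) shows the maximum over the box lies on the slice $\mflux = \mflux^+$, and on that slice (M1) places the maximizer at $\press_\node^-$ when $\mflux^+ \geq 0$ and at $\press_\node^+$ when $\mflux^+ < 0$, which is precisely the assertion.

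For (M1) I would reparametrize by $s \define F(\press_\node)$, so that $\press_\otherNode = F^{-1}(s - \beta)$ and the quotient equals $\phi(s)/\phi(s-\beta)$ with $\phi \define F'\circ F^{-1}$. Because $\press_\node \mapsto s$ is increasing, the sign of the $\press_\node$-derivative equals that of
\begin{equation*}
  \dparshort{s}{\ln\frac{\phi(s)}{\phi(s-\beta)}}
  = \Phi(\press_\node) - \Phi(\press_\otherNode),
  \qquad \Phi \define \frac{F''}{(F')^2},
\end{equation*}
so everything hinges on $\Phi$ being strictly decreasing. A direct computation from the known expressions for $F'$ and $F''$ collapses the common factor $\press^2(1+\alpha\press)^2$ and yields $\Phi(\press) = (\press^2 + \mflux^2\RsT(1+2\alpha\press))/(\press^2 - \mflux^2\RsT)^2$; clearing denominators reduces $\Phi'(\press) < 0$ to the positivity of $\press^3 + 3\mflux^2\RsT\,\press + 3\alpha\mflux^2\RsT\,\press^2 + \alpha(\mflux^2\RsT)^2$. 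This follows from the two bounds already in force, namely $\press^2 > \mflux^2\RsT$ from subsonic flow~\eqref{eq:press_lower_bound} and $\abs{\alpha}\press < 1$ from positive compressibility~\eqref{eq:press_upper_bound}: replacing $\abs{\alpha}$ by $1/\press$ in the two negative terms leaves the manifestly positive $(\press^2 - \mflux^2\RsT)(\press^2 + \mflux^2\RsT)/\press$. With $\Phi$ decreasing, (M1) is immediate since $\mflux > 0$ forces $\press_\otherNode < \press_\node$ and $\mflux < 0$ forces $\press_\otherNode > \press_\node$.

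The \emph{main obstacle} is (M2), because $\mflux$ enters both the definition of $F$ (through the $\mflux^2\RsT$ terms) and the friction term $\beta$, so the dependence of $\press_\otherNode$ on $\mflux$ must itself be extracted implicitly. Differentiating the defining identity in $\mflux$ at fixed $\press_\node$ gives
\begin{equation*}
  F'(\press_\otherNode)\,\dparshort{\mflux}{\press_\otherNode}
  = 2\mflux\RsT\left[\ell(\press_\node) - \ell(\press_\otherNode)\right] - \beta'(\mflux),
  \qquad \ell(\press) \define \ln\frac{1 + \alpha\press}{\press},
\end{equation*}
where $\ell$ is decreasing and $\beta'(\mflux) = \RsT\nfric\length\abs{\mflux} \geq 0$. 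Substituting this into the logarithmic derivative of the quotient and collecting terms, I expect
\begin{equation*}
  \dparshort{\mflux}{\ln\frac{F'(\press_\node)}{F'(\press_\otherNode)}}
  = -2\mflux\RsT\left(\frac{1}{\plbu} - \frac{1}{\plbv}\right)
  - \Phi(\press_\otherNode)\left(2\mflux\RsT\left[\ell(\press_\node) - \ell(\press_\otherNode)\right] - \beta'(\mflux)\right),
\end{equation*}
with $\Phi(\press_\otherNode) > 0$. The delicate part is the sign-chase: treating $\mflux > 0$ and $\mflux < 0$ separately and using only the monotonicity of $\ell$ and of $\press \mapsto \press^2 - \mflux^2\RsT$ together with the sign of $\press_\otherNode - \press_\node$, every term on the right turns out nonnegative, so the derivative is $\geq 0$ with equality only at $\mflux = 0$. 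Once this is verified, (M2) holds and, combined with (M1) on the slice $\mflux = \mflux^+$, yields the claimed maximizers $(\press_\node^-, \mflux^+)$ for $\mflux^+ \geq 0$ and $(\press_\node^+, \mflux^+)$ otherwise.
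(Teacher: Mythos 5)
Your proof is correct, and its skeleton is exactly the paper's: the claim is reduced to the signs of the two second\mbox{-}order partials, your (M1) being Lemma~\ref{thm:pressure_function_deriv_pp} ($\sign(\partial^2\press/\partial\press_\node^2)=-\sign(\mflux)$) and your (M2) being Lemma~\ref{thm:pressure_function_deriv_pq_sign} ($\partial^2\press/\partial\mflux\partial\press_\node\geq 0$), after which the corner of the box is read off just as in the paper's one-line proof of the theorem. Where you genuinely differ is in how these signs are established. For (M1), the paper imports the explicit formula \eqref{eq:pressure_function_deriv_pp} from the literature and compares the two terms of its numerator factor by factor using squared Mach numbers; your route---reparametrizing by $s=F(\press_\node)$ and reducing everything to the strict monotonicity of $\Phi=F''/(F')^2$---is self-contained and in fact equivalent, since the paper's numerator \eqref{eq:pressure_function_deriv_pp_numerator} equals $(\plbu)^2(\plbv)^2\bigl(\Phi(\press_\node)-\Phi(\press_\otherNode)\bigr)$; your cubic positivity check for $\Phi'<0$ (bounding $\abs{\alpha}$ by $1/\press$ and $\mflux^2\RsT$ by $\press^2$) is valid. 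For (M2), the paper performs a lengthy quotient-rule computation of $\dparshort{\mflux}{}\bigl(F'(\press_\node)/F'(\press_\otherNode)\bigr)$; your logarithmic-derivative version produces the formula you ``expect'' correctly (the implicit term is $\Phi(\press_\otherNode)\,F'(\press_\otherNode)\dparshort{\mflux}{\press_\otherNode}$, and the bracket multiplying $-\Phi(\press_\otherNode)$ is precisely the everywhere-nonpositive numerator of \eqref{eq:pressure_function_deriv_q}), and the sign chase closes because $\ell$ is strictly decreasing, so $\sign\bigl(\ell(\press_\node)-\ell(\press_\otherNode)\bigr)=-\sign(\mflux)$, while the first term is nonnegative because $\press\mapsto(\press^2-\mflux^2\RsT)^{-1}$ is decreasing on the subsonic range. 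Both arguments buy the same theorem; yours trades the imported second-derivative formula and the Mach-number bookkeeping for two clean monotonicity facts ($\Phi$ and $\ell$ decreasing), which is tidier, whereas the paper's computations are reused verbatim for \eqref{eq:pressure_function_deriv_qp_sign} and Theorem~\ref{thm:max-deriv-q}, a role your version could serve equally well.
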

\begin{theorem}
  \label{thm:max-deriv-q}
  Let $\feasset = [\press_\node^-, \press_\node^+] \times [\mflux^-,
  \mflux^+] \subseteq [\lb{\press}_\node, \ub{\press}_\node] \times
  [\lb{\mflux}, \ub{\mflux}]$ be a given box with $\feasset \neq
  \emptyset$.
  If Assumption~\ref{thm:assumption-gas-bounds} holds, the
  derivative~$\dparshort{\mflux}{\press_\otherNode}(\press_\node,
  \mflux) \leq 0$ has the following properties:
  \begin{align*}
    \min_{(\press_\node, \mflux) \in \feasset \cap \R \times \R_{\geq
    0}} \dparshort{\mflux}{\press_\otherNode}(\press_\node, \mflux)
    & = \dparshort{\mflux}{\press_\otherNode}(\press_\node^-, \mflux^+)
    & \text{if } \mflux^+ \geq 0,\\
    \min_{(\press_\node, \mflux) \in \feasset \cap \R \times \R_{\leq
    0}} \dparshort{\mflux}{\press_\otherNode}(\press_\node, \mflux)
    & > \dparshort{\mflux}{\press_\otherNode}(\press_\otherNode(\press_\node^-, \mflux^-), -\mflux^-)
    & \text{if } \mflux^- < 0.
  \end{align*}
\end{theorem}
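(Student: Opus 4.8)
The plan is to produce a closed form for $\dparshort{\mflux}{\press_\otherNode}$ by implicitly differentiating the defining equation~\eqref{eq:pressure_equation} at $x=\length$, namely $F(\press_\otherNode;\mflux) = F(\press_\node;\mflux) - \tfrac{1}{2}\specificGasConstant\temp\,\mflux\abs{\mflux}\nfric\length$, and then to read off the sign and the extremal behaviour from it. Since $F$ depends on $\mflux$ only through $\mflux^2$, writing $\dparshort{\press}{F}=F'$ (as in~\eqref{eq:pressure_function_F_deriv_p}) and $\dparshort{\mflux}{F}(\press;\mflux)=2\mflux\specificGasConstant\temp\,h(\press)$ with $h(\press)\define\ln\frac{1+\alpha\press}{\press}$, differentiation in $\mflux$ and solving give
\begin{equation*}
  \dparshort{\mflux}{\press_\otherNode}
  = \frac{2\mflux\specificGasConstant\temp\bigl(h(\press_\node)-h(\press_\otherNode)\bigr)
    - \specificGasConstant\temp\nfric\length\,\abs{\mflux}}{F'(\press_\otherNode)}.
\end{equation*}
Because $h'(\press)=-1/(\press(1+\alpha\press))<0$ on the admissible interval, $h$ is strictly decreasing; together with the pressure ordering $\press_\otherNode\le\press_\node$ for $\mflux\ge0$ and $\press_\otherNode\ge\press_\node$ for $\mflux\le0$ (both immediate from $F$ being strictly increasing), this makes the numerator nonpositive in either regime, establishing $\dparshort{\mflux}{\press_\otherNode}\le0$.

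For the case $\mflux^+\ge0$ I would restrict to $\mflux\ge0$, where $\abs{\mflux}=\mflux$ and the numerator factors as $\mflux\bigl(2\specificGasConstant\temp(h(\press_\node)-h(\press_\otherNode))-\specificGasConstant\temp\nfric\length\bigr)$. The goal is to show that $\dparshort{\mflux}{\press_\otherNode}$ is nondecreasing in $\press_\node$ and nonincreasing in $\mflux$ on the box, since then
\begin{equation*}
  \dparshort{\mflux}{\press_\otherNode}(\press_\node,\mflux)
  \ge \dparshort{\mflux}{\press_\otherNode}(\press_\node^-,\mflux)
  \ge \dparshort{\mflux}{\press_\otherNode}(\press_\node^-,\mflux^+),
\end{equation*}
so that the minimiser is the corner $(\press_\node^-,\mflux^+)$. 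I expect this to be the main obstacle: it requires differentiating the closed form once more and controlling the signs of the resulting terms, using the convexity $F''>0$, the monotonicity of $h$ and of $F'$, and the induced derivatives $\dparshort{\press_\node}{\press_\otherNode}>0$ (the object of Theorem~\ref{thm:max-deriv-p}) and $\dparshort{\mflux}{\press_\otherNode}\le0$ of the implicit function $\press_\otherNode$. This second-order sign bookkeeping is delicate precisely because $\press_\otherNode$ itself varies with both $\press_\node$ and $\mflux$.

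For the case $\mflux^-<0$ I would exploit the reflection symmetry $F(\cdot;\mflux)=F(\cdot;-\mflux)$: reversing the flow interchanges the roles of $\node$ and $\otherNode$, so for $\mflux<0$ the pressures solve the forward relation with flux $\abs{\mflux}$ and upstream value $\press_\otherNode$. Writing $G(\cdot,\cdot)$ for the positive-flux downstream map of the first case, this reads $\press_\node = G(\press_\otherNode,\abs{\mflux})$, and differentiating this inverse relation in $\mflux$ yields
\begin{equation*}
  \dparshort{\mflux}{\press_\otherNode}(\press_\node,\mflux)
  = \frac{\partial_2 G(\press_\otherNode,\abs{\mflux})}{\partial_1 G(\press_\otherNode,\abs{\mflux})},
  \quad
  \partial_1 G = \frac{F'(\press_\otherNode)}{F'(\press_\node)}\ge 1,
\end{equation*}
where $\partial_1 G\ge1$ follows from $F'$ being increasing (by $F''>0$) together with $\press_\otherNode\ge\press_\node$, and is strict whenever $\mflux\ne0$. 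The numerator $\partial_2 G(\press_\otherNode,\abs{\mflux})$ is exactly a positive-flux derivative $\dparshort{\mflux}{\press_\otherNode}$ at upstream pressure $\press_\otherNode$, hence nonpositive by the first case, and dividing a nonpositive quantity by a factor $\ge1$ can only increase it. Transporting the first-case monotonicity through this reflection shows that the minimum over the negative part of the box is controlled by the value at the corner $(\press_\node^-,\mflux^-)$, whose reflected image is exactly the evaluation point $\bigl(\press_\otherNode(\press_\node^-,\mflux^-),-\mflux^-\bigr)$ appearing on the right-hand side; the strict inequality $\partial_1 G>1$ there upgrades the resulting bound from $\ge$ to the claimed $>$.

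Finally, it is worth stressing why the second case is stated as a strict lower bound rather than an exact minimiser: the weights $w_i$ in Lemma~\ref{thm:multi-dim-lipschitz} only require a computable \emph{upper} bound on $\abs{\dparshort{\mflux}{\press_\otherNode}}$, hence a \emph{lower} bound on the nonpositive minimum of $\dparshort{\mflux}{\press_\otherNode}$ suffices. The reflected positive-flux derivative $\dparshort{\mflux}{\press_\otherNode}(\press_\otherNode(\press_\node^-,\mflux^-),-\mflux^-)$ is such a value, computable through the first case, and the factor $\partial_1 G>1$ guarantees that it is a genuine strict bound.
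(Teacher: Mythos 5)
Your overall strategy is exactly the paper's: an explicit formula for $\dparshort{\mflux}{\press_\otherNode}$ via implicit differentiation of $F$, a corner argument for $\mflux\ge 0$ based on monotonicity of $\dparshort{\mflux}{\press_\otherNode}$ in $\press_\node$ and in $\mflux$, and the reflection identity $\press_\node=\press_\otherNode(\press_\otherNode,-\mflux)$ together with $F'(\press_\otherNode)/F'(\press_\node)>1$ to reduce the $\mflux<0$ case to a positive-flux evaluation (this last step reproduces Lemma~\ref{thm:deriv-q-pos-flow} essentially verbatim). The closed form and the sign $\dparshort{\mflux}{\press_\otherNode}\le 0$ are correct. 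However, there is a genuine gap: the two monotonicity claims that locate the minimizer at $(\press_\node^-,\mflux^+)$ are precisely the content of Lemmas~\ref{thm:pressure_function_deriv_pq_sign} and~\ref{thm:pressure_function_deriv_qq} ($\partial^2\press/\partial\mflux\partial\press_\node>0$ and $\partial^2\press/\partial\mflux^2<0$ for $\mflux>0$), each of which requires about a page of careful algebra (including an auxiliary function and the sign facts \eqref{eq:two_alpha_press}). You explicitly flag this as ``the main obstacle'' and do not carry it out, so the first claim of the theorem is not actually established by your argument.

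The second case has a more specific problem beyond the missing mixed-derivative sign (which you also need there to fix $\press_\node=\press_\node^-$). You assert that ``transporting the first-case monotonicity through this reflection'' controls the minimum over $\mflux\le 0$ by the value at $\mflux^-$. But the reflected map $g(\mflux)\define\dparshort{\mflux}{\press_\otherNode}\bigl(\press_\otherNode(\press_\node^-,\mflux),-\mflux\bigr)$ has total derivative
\begin{equation*}
  g'(\mflux)
  = \frac{\partial^2\press}{\partial\press_\node\,\partial\mflux}\cdot\dparshort{\mflux}{\press_\otherNode}(\press_\node^-,\mflux)
  \;-\;\frac{\partial^2\press}{\partial\mflux^2},
\end{equation*}
in which, using your first-case signs, the first term is negative and the second is positive: the positive-flux monotonicity does not transport. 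The paper needs a separate computation (Lemma~\ref{thm:deriv-q-pos-flow-deriv-q}) that exploits the identity $\press\bigl(x,\press(x,\press_\node,\mflux),-\mflux\bigr)=\press_\node$ to write $g$ in closed form in $(\press_\node,\mflux)$ and only then reads off $g'>0$. Without that, you cannot conclude that the reflected lower bound is minimized at the corner $\mflux^-$, so the second claim is also incomplete.
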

\rev{The proofs of these two theorems can be found in the appendix of
  this paper as they are rather long and technical.}

Theorem~\ref{thm:max-deriv-p} and Theorem~\ref{thm:max-deriv-q} can be
used to not only compute the bounds in~\eqref{eq:box-bounds-rhs} for
the initial box $[\lb{\press}_\node, \ub{\press}_\node] \times
[\lb{\mflux}, \ub{\mflux}]$ but also for every new box that is created
in Step~\ref{alg:mult-method-split-box} of
Algorithm~\ref{alg:mult-method}.
This can significantly tighten the bounds in~\eqref{eq:box-bounds-rhs}
as the iteration proceeds.

\subsection{Numerical Results}
\label{sec:case-study-gas-numerical-results}
Now, we apply Algorithm~\ref{alg:mult-method} to two test problems
from the \sfname{GasLib} library~\cite{Humpola_et_al:2015}, which contains
stationary gas network benchmark instances.
To this end, we implemented our method in
\Python~3.8.10.
The computations were done on a machine with an \CPU with $4$~cores,
\SIrange{1.8}{4.0}{\giga\hertz}, and \SI{16}{\giga\byte}\,RAM.
The master problems~\eqref{eq:mult-problem-master} and the
subproblems~\eqref{eq:mult-problem-sub} have been modeled using
\GAMS~36.2.0~\cite{Bussieck_Meeraus:2004}.
We used the solver~\CPLEX~20.1.0.1~\cite{cplex2009v12} to solve the
master problems, which are MIPs, and the
solver~\SNOPT~7.7.7~\cite{Gill_et_al:2005} for the subproblems, which
are NLPs.

To improve the performance of our method, we detect boxes
$[\press_\node^-, \press_\node^+] \times [\mflux^-, \mflux^+]$
in~$\boxset_i^k$ that lie outside the feasible set of a pipe.
From \eqref{eq:pressure_function_deriv_p} and
\eqref{eq:pressure_function_deriv_q} we know that this is the case if
$\mflux^- \geq 0$ and $\press_\otherNode(\press_\node^+, \mflux^-) <
\lb{\press}_\otherNode$ holds or if $\mflux^+ \leq 0$ and
$\press_\otherNode(\press_\node^-, \mflux^+) > \ub{\press}_\otherNode$
holds.
Additionally, we can fix the flow in pipes that are not part of a
cycle.
This allows us to reduce the dimension of the corresponding
nonlinearities by one.
To get well-scaled problems, we model all pressure values in \si{\bar}
instead of \si{\pascal} and exclusively use mass flow values (in
\si{\kilogram \per \second}) instead of mass flux values. As a result,
we have to scale all values of
$\dparshort{\mflux}{\press_\otherNode}(\press_\node, \mflux)$ that are
used to compute the bounds in~\eqref{eq:box-bounds-rhs} by a factor of
$10^{-5}/\area$.

\rev{To compute a sufficiently large value $M$ for the master problem constraints \eqref{eq:mult-problem-master-mip-polytope-bigm-lower}--\eqref{eq:mult-problem-master-mip-lipschitz-geq} we use the difference between the values that can occur on the right-hand sides of the inequalities and the bounds of the variables on the left-hand side.
This leads to the formula
\begin{equation*}
	M \define \max\Set{\left(\ub{\press}_\node - \lb{\press}_\node\right), \left(\ub{\mflow} - \lb{\mflow}\right), \left(\frac{1}{\abs{\alpha}} - \lb{\press}_\otherNode\right), \left(\ub{\press}_\otherNode - \frac{\max\Set{\ub{\mflow}, \abs{\lb{\mflow}}}}{\area} \sqrt{\specificGasConstant \temp}\right)}.
\end{equation*}}

\begin{figure}
  \centering
  \begin{tikzpicture}[scale=1]
  \def \radius {0.5cm}
  \def \specialRadius {0.5cm}
  \def \lineThickness {very thick}
  \def \locationFactor {0.01}

  \def \nodeLocations {0.0*\locationFactor/0.0*\locationFactor/entry01,
    200.0*\locationFactor/0.0*\locationFactor/entry03,
    600.0*\locationFactor/-300.0*\locationFactor/entry02,
    600.0*\locationFactor/300.0*\locationFactor/exit01,
    1141.0*\locationFactor/141.0*\locationFactor/exit02,
    1141.0*\locationFactor/-141.0*\locationFactor/exit03,
    400.0*\locationFactor/0.0*\locationFactor/N01,
    600.0*\locationFactor/100.0*\locationFactor/N02,
    600.0*\locationFactor/-100.0*\locationFactor/N03,
    800.0*\locationFactor/0.0*\locationFactor/N04,
    1000.0*\locationFactor/0.0*\locationFactor/N05}
  \foreach \x/\y/\name in \nodeLocations
  \node[circle, draw=black, minimum size=\radius, \lineThickness](\name) at (\x,\y){ };

  \def \pipes {entry01/entry03,
    N01/N02,
    entry02/N03,
    N02/exit01,
    N02/N04,
    N03/N04,
    N05/exit02,
    N05/exit03}
  \foreach \source/\dest in \pipes
  \draw [\lineThickness] (\source) -- (\dest);

  \def \compressorStations {entry03/N01/300.0*\locationFactor/0.0*\locationFactor/0.0/CS01entry03N01,
    N04/N05/900.0*\locationFactor/0.0*\locationFactor/0.0/CS02N04N05}
  \foreach \source/\dest/\x/\y/\rotation/\name in \compressorStations
  {
    \node[compressor, draw=black, rotate=\rotation, minimum size=\specialRadius, \lineThickness](\name) at (\x,\y){ };
    \draw [\lineThickness] (\source) -- (\name) -- (\dest);
  }
  \def \valves {N01/N03/500.0*\locationFactor/-50.0*\locationFactor/-26.56505117707799/V01N01N03}
  \foreach \source/\dest/\x/\y/\rotation/\name in \valves
  {
    \node[valve, draw=black, rotate=\rotation, minimum size=\specialRadius, \lineThickness](\name) at (\x,\y){ };
    \draw [\lineThickness] (\source) -- (\name) -- (\dest);
  }
\end{tikzpicture}%
%
%
  \caption{Schematic representation of \textsf{GasLib-11}.}
  \label{fig:gaslib-11}
\end{figure}
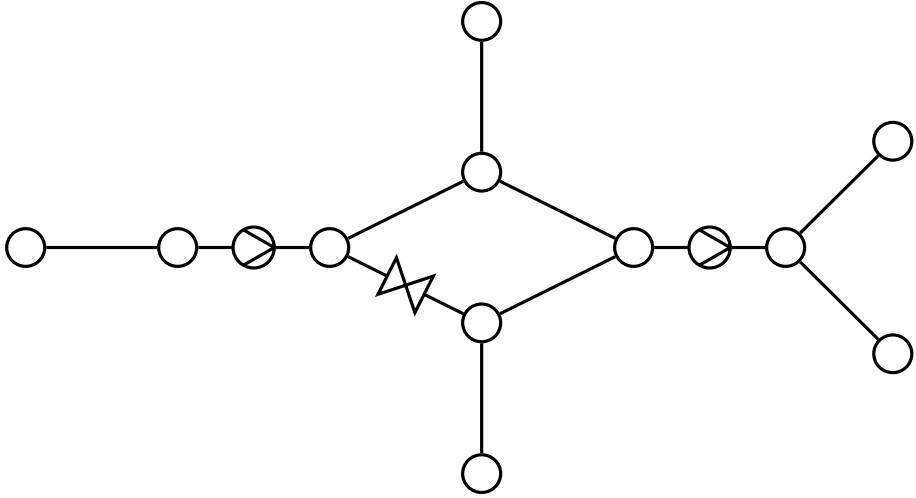

The first instance we solve is \textsf{GasLib-11}, which is shown in
Figure~\ref{fig:gaslib-11}. It contains 11 nodes including 3
entries and 3 exits, 8 pipes, 2 compressor stations, and a single
valve. Because the network has a cycle, not all flows on all arcs are
known a priori.

\begin{figure}
  \centering
  \begin{tikzpicture}[scale=1]
  \def \radius {0.3cm}
  \def \specialRadius {0.3cm}
  \def \lineThickness {very thick}
  \def \locationFactor {0.013}

  \def \nodeLocations {-100.0*\locationFactor/500.0*\locationFactor/entry03,
    -200.0*\locationFactor/300.0*\locationFactor/entry01,
    -100.0*\locationFactor/400.0*\locationFactor/entry02,
    250.0*\locationFactor/400.0*\locationFactor/exit01,
    500.0*\locationFactor/100.0*\locationFactor/exit02,
    700.0*\locationFactor/400.0*\locationFactor/exit03,
    700.0*\locationFactor/200.0*\locationFactor/exit04,
    125.0*\locationFactor/100.0*\locationFactor/exit05,
    -150.0*\locationFactor/300.0*\locationFactor/N101,
    -100.0*\locationFactor/300.0*\locationFactor/N01,
    -50.0*\locationFactor/300.0*\locationFactor/N04,
    25.0*\locationFactor/300.0*\locationFactor/N05,
    75.0*\locationFactor/300.0*\locationFactor/N05a,
    175.0*\locationFactor/300.0*\locationFactor/N05b,
    125.0*\locationFactor/400.0*\locationFactor/N05c,
    125.0*\locationFactor/200.0*\locationFactor/N05d,
    250.0*\locationFactor/300.0*\locationFactor/N06,
    283.3*\locationFactor/300.0*\locationFactor/N08,
    350.0*\locationFactor/300.0*\locationFactor/N09,
    425.0*\locationFactor/400.0*\locationFactor/N10,
    425.0*\locationFactor/200.0*\locationFactor/N11,
    425.0*\locationFactor/100.0*\locationFactor/N12,
    500.0*\locationFactor/300.0*\locationFactor/N13,
    575.0*\locationFactor/300.0*\locationFactor/N16}
  \foreach \x/\y/\name in \nodeLocations
  \node[circle, draw=black, minimum size=\radius, \lineThickness](\name) at (\x,\y){ };

  \def \pipes {entry03/entry02,
    N05d/exit05,
    entry01/N101,
    N01/N04,
    N05/N05a,
    N05a/N05c,
    N05c/N05b,
    N05a/N05d,
    N05d/N05b,
    N05b/N06,
    N06/exit01,
    N06/N08,
    N09/N10,
    N10/N13,
    N09/N11,
    N11/N13,
    N12/exit02,
    N16/exit03,
    N16/exit04}
  \foreach \source/\dest in \pipes
  \draw [\lineThickness] (\source) -- (\dest);

  \def \shortPipes {entry02/N01,
    N101/N01}
  \foreach \source/\dest in \shortPipes
  \draw [\lineThickness] (\source) -- (\dest);

  \def \compressorStations {N04/N05/-12.5*\locationFactor/300.0*\locationFactor/0.0/CS1,
    N08/N09/316.7*\locationFactor/300.0*\locationFactor/0.0/CS2,
    N13/N16/537.5*\locationFactor/300.0*\locationFactor/0.0/CS3}
  \foreach \source/\dest/\x/\y/\rotation/\name in \compressorStations
  {
    \node[compressor, draw=black, rotate=\rotation, minimum size=\specialRadius, \lineThickness](\name) at (\x,\y){ };
    \draw [\lineThickness] (\source) -- (\name) -- (\dest);
  }
  \def \controlValves {N11/N12/425.0*\locationFactor/150.0*\locationFactor/-90.0/CV01}
  \foreach \source/\dest/\x/\y/\rotation/\name in \controlValves
  {
    \node[controlvalve, draw=black, rotate=\rotation, minimum size=\specialRadius, \lineThickness](\name) at (\x,\y){ };
    \draw [\lineThickness] (\source) -- (\name) -- (\dest);
  }
\end{tikzpicture}%
%
%
  \caption{Schematic representation of \textsf{GasLib-24}.}
  \label{fig:gaslib-24}
\end{figure}
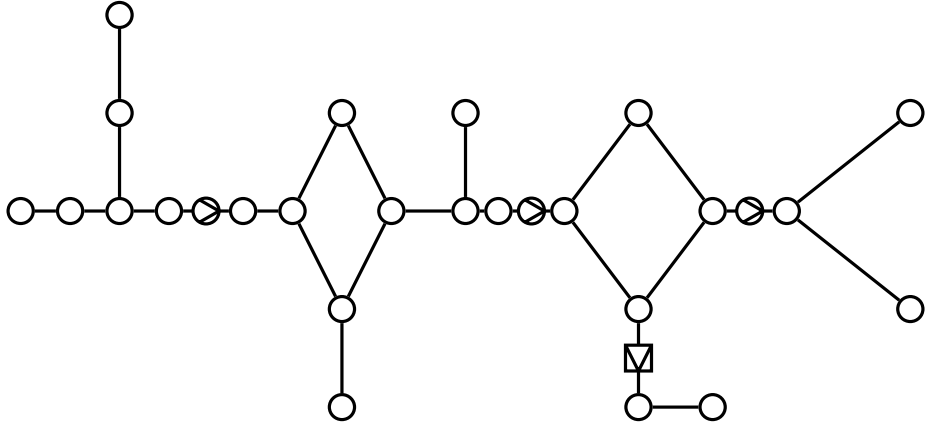

The second instance we solve is \textsf{GasLib-24}; see
Figure~\ref{fig:gaslib-24}.
It consists of $24$ nodes including 3 entries and 5 exits, $19$
pipes, 3 compressor stations, a single control valve, a single one
short pipe, and a single resistor, which we replace by a
short pipe.
There are two cycles in the network.

\begin{table}
  \centering
  \caption{The runtimes and numbers of iterations of
    Algorithm~\ref{alg:mult-method} for different parameters
    $\lambda$.
    If no overall time is given, the time limit of \SI{1000}{\second}
    was reached before an $\myeps$-feasible solution could be found.}
  \label{tab:solution-gas}
  \begin{tabular}{lrrrr}
    \toprule
    Instance & $\lambda$ & \# iterations & Mean iteration time (\si{\second}) & Overall time (\si{\second})\\
    \midrule
    \textsf{GasLib-11} & $0.125$ & $182$ & $1.37$ & $248.76$\\
    \textsf{GasLib-11} & $0.25$ & $89$ & $0.78$ & $69.45$\\
    \textsf{GasLib-11} & $0.375$ & $74$ & $0.64$ & $47.41$\\
    \textsf{GasLib-11} & $0.5$ & $123$ & $0.85$ & $104.24$\\
    \midrule
    \textsf{GasLib-24} & $0.125$ & $218$ & $4.60$ & $-$\\
    \textsf{GasLib-24} & $0.25$ & $95$ & $1.67$ & $158.43$\\
    \textsf{GasLib-24} & $0.375$ & $198$ & $4.53$ & $896.17$\\
    \textsf{GasLib-24} & $0.5$ & $216$ & $4.64$ & $-$\\
    \bottomrule
  \end{tabular}
\end{table}

For both instances we tested the values $0.125$, $0.25$, $0.375$, and
$0.5$ for the parameter $\lambda$.
We chose the value $\myeps = 0.1$ (in \si{\bar}) for the termination
criterion.
The resulting runtimes and numbers of iterations are listed in
Table~\ref{tab:solution-gas}.
In both cases, $\lambda=0.25$ and $\lambda=0.375$ yield better results
than $\lambda=0.125$ or $\lambda=0.5$.
For $\lambda=0.125$ this can be explained by the fact that the boxes
do not shrink fast enough when split because the splitting point can
be quite close to the edge of its box.
Choosing~$\lambda=0.5$, on the other hand, removes the possibility for
the splitting points to be closer to the master problem solution and
therefore potentially to the optimal solution of
Problem~\eqref{eq:mult-problem}.

For \textsf{GasLib-11} the best result was achieved for
$\lambda=0.375$ for which an $\myeps$-feasible solution was found in
\SI{47.41}{\second}.
The method terminated after $74$ iterations with a mean iteration time
of \SI{0.64}{\second}.
The mean time to solve the master problem was \SI{0.35}{\second}
and the mean time to solve the subproblems was \SI{0.14}{\second}.
The remaining \SI{0.16}{\second} were used to (re-)build the model in
each iteration.

For \textsf{GasLib-24} the best result was achieved for $\lambda=0.25$
for which an $\myeps$-feasible solution was found in
\SI{158.43}{\second}.
With $95$ iterations, the mean iteration time was
\SI{1.67}{\second}. This mean includes \SI{1.05}{\second} to solve the
master problem, \SI{0.23}{\second} to solve the subproblems, and
\SI{0.39}{\second} to (re-)build the model in each iteration.

\begin{figure}
  \centering
  \begin{minipage}{.49\textwidth}
    \begin{tikzpicture}
  \def \thickness {thick}
  \begin{axis}[
    scale only axis,
    ymode=log,
    xmin=0, xmax=73,
    width=0.85\textwidth,
    xlabel=Iteration,
    ylabel near ticks,
    grid=major,
    legend pos=north east]
    \addplot [\thickness, blue] table[x=iteration, y=max_error] {tikz-imgs/gaslib-11.dat};
  \end{axis}
\end{tikzpicture}
  \end{minipage}
  \begin{minipage}{.49\textwidth}
        \begin{tikzpicture}
      \def \thickness {thick}
      \begin{axis}[
        scale only axis,
        ymode=log,
        xmin=0, xmax=94,
        width=0.85\textwidth,
        xlabel=Iteration,
        ylabel near ticks,
        grid=major,
        legend pos=north east]
        \addplot [\thickness, blue] table[x=iteration, y=max_error] {tikz-imgs/gaslib-24.dat};
      \end{axis}
    \end{tikzpicture}
  \end{minipage}
  \vspace*{-6mm}
  \caption{The maximal error $\max_{i \in [p]} \abs{f_i(x_{\multind})
      - x_{\rind}}$ in each iteration for \textsf{GasLib-11} with $\lambda=0.375$ (left)
    and \textsf{GasLib-24} with $\lambda=0.25$ (right) on a logarithmic scale.}
  \label{fig:gaslib-iterations}
\end{figure}

Figure~\ref{fig:gaslib-iterations} shows the progress of the
maximal error over the course of the iterations for the best-case of
both test instances.
One can see that it falls rapidly in the first iterations.
After that, it fluctuates strongly with only a slight downward trend
until the threshold $\max_{i \in [p]} \abs{f_i(x_{\multind}) -
  x_{\rind}} \leq \varepsilon$ is reached.
This behavior is typical as the approximation progress from splitting
a box in Step~\ref{alg:mult-method-split-box} of
Algorithm~\ref{alg:mult-method} is reduced as the boxes get smaller.
The fluctuations can be explained by the master problem solution
switching to a bigger box $j_i^k \in J_i^k$ after the previous box has
been refined a number of times in
Step~\ref{alg:mult-method-split-box}.


\section{Conclusion}
\label{sec:conclusion}

In this paper we developed a successive linear relaxation method for
solving MINLPs with nonlinearities that are not given in closed form.
Instead, we only assume that these multivariate nonlinearities can be
evaluated and that we know their global Lipschitz constants.
We illustrate the flexibility of this class of models and of our
method by showing that it can be applied to bilevel optimization
models with nonlinear and nonconvex lower-level problems as well as to
nonconvex MINLPs for gas transport problems that are constrained by
differential equations.
Moreover, we proved finite termination of the method \rev{(at
  approximate global optimal solutions)} and derived a
worst-case iteration bound.

Finally, let us sketch \rev{three} important topics for future research that
are out of scope of this paper.
First, one can weaken the assumptions made in this paper as it is done
in~\cite{Schmidt_et_al:2019}.
In particular, the assumption of exact function evaluations is rather
strong in some applications such as, \eg, in the case of PDE
constraints.
\rev{Second, one could also incorporate general-purpose local solvers
  to compute feasible points that lead to upper bounds.
  Together with lower bounds that we directly get from solving the
  master problem in every iteration, this could be used to obtain a
  gap and thus a further termination criterion besides the one based
  on the approximation accuracy that we used in this paper.}
\rev{Third}, it is obvious (and also not expected) that the proposed method
is not competitive with MINLP solution methods that explicitly use the
structural properties of the nonlinearities that are given in closed
form.
However, there are many possible ways of improving the general method
proposed in this paper.
For instance, the incorporation of presolving and dimension-reduction
techniques would help a lot to improve the performance of the method.


\section*{Acknowledgments}
\label{sec:acknowledgements}

The first author gratefully acknowledges the use of the
services and facilities of the Energie Campus Nürnberg 
and financial support of the state of Bavaria.
All authors thank the DFG for their support within projects A05,
B08, and C08 in CRC TRR 154.


\ifAppendix
\appendix
\section{Proofs of Theorem~\ref{thm:max-deriv-p} and
  Theorem~\ref{thm:max-deriv-q}}
\label{sec:appendix}

\rev{In this appendix, we present the proof for
  Theorem~\ref{thm:max-deriv-p} and Theorem~\ref{thm:max-deriv-q}. To
  this end, we first need to analyze the first- and second-order
  derivatives of the pressure function~$\press(x, \press_\node,
  \mflux)$. The first-order derivatives are considered in
  Lemma~\ref{thm:press_first_derivatives}, the second-order
  derivatives $\partial_{\press_\node}^2\press$ and
  $\dparshort{\mflux}{\dparshort{\press_\node}{\press}}$ are examined
  in Lemma~\ref{thm:pressure_function_deriv_pp}
  and~\ref{thm:pressure_function_deriv_pq_sign}, and, finally, the
  second-order derivative $\partial_{\mflux}^2\press$ is investigated
  in Lemma~\ref{thm:pressure_function_deriv_qq},
  \ref{thm:deriv-q-pos-flow}, and~\ref{thm:deriv-q-pos-flow-deriv-q}.}

\rev{We start with the following properties of the first-order
  derivatives from \cite{Gugat_et_al:2018c}.}
\begin{lemma}
	\label{thm:press_first_derivatives}
	The function $\press$ as defined in \eqref{eq:pressure_function} is
	differentiable for $\press \in (\abs{\mflux} \sqrt{\specificGasConstant
		\temp}, 1 / \abs{\alpha})$ with
	\begin{align}
		\label{eq:pressure_function_deriv_p}
		\dpar{\press_\node}{\press}(x, \press_\node, \mflux)
		&= \frac{F'(\press_\node)}{F'(\press(x, \press_\node, \mflux))} > 0,\\
		\label{eq:pressure_function_deriv_q}
		\dpar{\mflux}{\press}(x, \press_\node, \mflux)
		&= \frac{2\mflux \specificGasConstant \temp \ln\left(\frac{(\pubu)
				\press(x, \press_\node, \mflux)}{(\pubaux{\press(x,
					\press_\node, \mflux)}) \press_\node}\right)
			- \specificGasConstant \temp \abs{\mflux} \nfric x}{F'(\press(x,
			\press_\node, \mflux))}
		\begin{cases}
			= 0, & \text{for } \mflux = 0,\\
			< 0, & \text{for } \mflux \neq 0,
		\end{cases}
	\end{align}
	and
	\begin{equation}
		\label{eq:pressure_function_deriv_x}
		\sign\left(\dpar{x}{\press}(x, \press_\node, \mflux)\right)
		= - \sign(\mflux).
	\end{equation}
\end{lemma}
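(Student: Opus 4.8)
The plan is to read all three identities off the defining relation~\eqref{eq:pressure_equation}, which I write as $F(\press,\mflux)=F(\press_\node,\mflux)-\tfrac12\specificGasConstant\temp\mflux\abs{\mflux}\nfric x$, now displaying the dependence of $F$ on $\mflux$ through its coefficients as well as through its argument. Differentiability of $\press$ is then an application of the implicit function theorem: by~\eqref{eq:pressure_function_F_deriv_p} we have $\partial_\press F(\press,\mflux)=F'(\press)>0$ on the admissible interval, so $F(\cdot,\mflux)$ is a $C^1$-diffeomorphism and the inverse appearing in~\eqref{eq:pressure_function} is differentiable; the only nonsmooth piece of the right-hand side is $\mflux\abs{\mflux}$, which is still $C^1$ with derivative $2\abs{\mflux}$, so the composition defining $\press$ is differentiable on all of $\feasset$, including at $\mflux=0$.

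First I would treat the two easy identities. Differentiating~\eqref{eq:pressure_equation} with respect to $\press_\node$ (the right-hand side depends on $\press_\node$ only through $F(\press_\node,\mflux)$) gives $F'(\press)\,\partial_{\press_\node}\press=F'(\press_\node)$, and division by $F'(\press)>0$ yields~\eqref{eq:pressure_function_deriv_p} together with its positivity. Differentiating with respect to $x$ instead annihilates the $F(\press_\node,\mflux)$-term and leaves $F'(\press)\,\partial_x\press=-\tfrac12\specificGasConstant\temp\mflux\abs{\mflux}\nfric$, whence $\sign(\partial_x\press)=-\sign(\mflux\abs{\mflux})=-\sign(\mflux)$, which is~\eqref{eq:pressure_function_deriv_x}. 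For the $\mflux$-derivative I first record the partial derivative of $F$ in its coefficient variable, $\partial_\mflux F(\press,\mflux)=2\mflux\specificGasConstant\temp\ln\frac{\pub}{\press}$, using $\pub>0$; implicit differentiation of~\eqref{eq:pressure_equation}, accounting now for all three occurrences of $\mflux$, then reproduces precisely the quotient in~\eqref{eq:pressure_function_deriv_q}.

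The genuine difficulty is the sign of $\partial_\mflux\press$. For $\mflux=0$ the numerator vanishes and there is nothing to prove; for $\mflux\neq 0$ the numerator is a difference of two terms that, for each fixed sign of $\mflux$, share a sign, so no term-by-term estimate can decide it and one must exploit~\eqref{eq:pressure_equation} a second time. My plan is to substitute the expression for $F(\press_\node,\mflux)-F(\press,\mflux)$ read from~\eqref{eq:pressure_function_F} into the numerator so as to eliminate the factor $\nfric x$. After the logarithmic terms carrying the coefficient $\mflux^2\specificGasConstant\temp$ cancel against the explicit logarithm, the numerator collapses to a single monotone term: with the one-variable function $\psi(s)\define s-\tfrac1\alpha\ln(\pubaux{s})$ one obtains the identity that the numerator of~\eqref{eq:pressure_function_deriv_q} equals $\tfrac{2}{\mflux\alpha}\,[\psi(\press)-\psi(\press_\node)]$.

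It then remains to sign this reduced expression. A short computation gives $\psi'(s)=\frac{\alpha s}{\pubaux{s}}<0$ on the admissible range, since $\alpha<0$, $s>0$, and $\pubaux{s}>0$; hence $\psi$ is strictly decreasing. Combining this with the monotonicity already established in~\eqref{eq:pressure_function_deriv_x}---for $\mflux>0$ the pressure strictly decreases along a pipe of positive length, so $\press<\press_\node$, whereas for $\mflux<0$ it increases, so $\press>\press_\node$---shows that $\psi(\press)-\psi(\press_\node)$ always carries the same sign as $\mflux$, so that $[\psi(\press)-\psi(\press_\node)]/\mflux>0$; since $2/\alpha<0$, the numerator is negative, and dividing by $F'(\press)>0$ gives $\partial_\mflux\press<0$ for $\mflux\neq0$. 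This completes the sign statement in~\eqref{eq:pressure_function_deriv_q} and hence the lemma. The one place demanding care throughout is keeping track of the sign of $\alpha$ and of the implicit $\mflux$-dependence of $F$, which is exactly why the cancellation leading to the monotone function $\psi$ is the crux of the argument.
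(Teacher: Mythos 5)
Your proposal is correct, but there is nothing in the paper to compare it against line by line: the paper does not prove this lemma, it imports it wholesale (``We start with the following properties of the first-order derivatives from \cite{Gugat_et_al:2018c}''), so what you have written is a self-contained derivation of a merely cited result. The routine parts check out: the implicit function theorem applies because $F'(\press)>0$ on the admissible interval, $\mflux\abs{\mflux}$ is $C^1$ with derivative $2\abs{\mflux}$, differentiating the defining relation in $\press_\node$ and in $x$ gives \eqref{eq:pressure_function_deriv_p} and \eqref{eq:pressure_function_deriv_x} immediately, and differentiating in $\mflux$ through both the argument and the $\mflux^2\specificGasConstant\temp$ coefficients of $F$ (with $\partial_\mflux F(\press,\mflux)=2\mflux\specificGasConstant\temp\ln\frac{1+\alpha\press}{\press}$) reproduces the quotient in \eqref{eq:pressure_function_deriv_q}. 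The genuinely nontrivial step is your sign argument, and it is sound: substituting $\specificGasConstant\temp\abs{\mflux}\nfric x=\frac{2}{\mflux}\bigl(F(\press_\node)-F(\press)\bigr)$ from \eqref{eq:pressure_equation} makes all logarithms carrying the coefficient $\mflux^2\specificGasConstant\temp$ cancel, and I verified that the numerator collapses to $\frac{2}{\mflux\alpha}\bigl[\psi(\press)-\psi(\press_\node)\bigr]$ with $\psi(s)=s-\alpha^{-1}\ln(1+\alpha s)$ and $\psi'(s)=\alpha s/(1+\alpha s)<0$; together with $\sign(\press_\node-\press)=\sign(\mflux)$ and $\alpha<0$ this gives the claimed negativity. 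One small caveat: the strict inequality requires $\press\neq\press_\node$, i.e.\ $x>0$ (at $x=0$ the numerator vanishes for every $\mflux$), which is consistent with the restriction $x\in(0,\length]$ imposed in all subsequent lemmas of the appendix but is not visible in the statement as quoted; this is a defect of the statement, not of your argument.
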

The sign condition in~\eqref{eq:pressure_function_deriv_x} implies
\begin{align*}
	\press_\node & > \press(x, \press_\node, \mflux) \quad \text{if } \mflux > 0,\\
	\press_\node & < \press(x, \press_\node, \mflux) \quad \text{if } \mflux < 0,\\
	\press_\node & = \press(x, \press_\node, \mflux) \quad \text{if } \mflux = 0
\end{align*}
for $x \in (0, \length]$.
This can be written as
\begin{equation*}
	\sign(\mflux) = \sign(\press_\node - \press(x, \press_\node, \mflux)).
\end{equation*}

From \cite{Gugat_et_al:2018b} we know that the second derivative \wrt
$\press_\node$ of $\press$ is given by
\begin{equation}
	\label{eq:pressure_function_deriv_pp}
	\begin{aligned}
		\frac{\partial^2 \press}{\partial \press_\node^2}
		={} & \frac{(\press_\node^2 + \specificGasConstant \temp
			\mflux^2(1 + 2\alpha \press_\node)) (\press^2 -
			\specificGasConstant \temp \mflux^2)^2}{F'(\press)
			\press_\node^2 (1 + \alpha \press_\node)^2 (\press^2 - \mflux^2
			\specificGasConstant \temp)^2}
		\\
		& \quad - \frac{(\press^2 + \specificGasConstant \temp \mflux^2(1
			+ 2 \alpha \press)) (\press_\node^2 - \specificGasConstant \temp
			\mflux^2)^2}{F'(\press) \press_\node^2 (1 + \alpha
			\press_\node)^2 (\press^2 - \mflux^2 \specificGasConstant
			\temp)^2}.
	\end{aligned}
\end{equation}
The sign of this derivative is given by the following lemma.
\begin{lemma}
	\label{thm:pressure_function_deriv_pp}
	It holds
	\begin{equation*}
		\label{eq:pressure_function_deriv_pp_sign}
		\sign\left(\frac{\partial^2 \press}{\partial \press_\node^2}\right)
		= - \sign(\mflux)
	\end{equation*}
	for $\press \in (\abs{\mflux} \sqrt{\specificGasConstant \temp},
	1 / \abs{\alpha})$ and $x \in (0, \length]$.
\end{lemma}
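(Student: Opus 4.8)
The plan is to extract the sign of the expression in \eqref{eq:pressure_function_deriv_pp} by isolating a manifestly positive denominator and then comparing a symmetric numerator at the two arguments $\press_\node$ and $\press$. First I would note that the common denominator $F'(\press)\,\press_\node^2\,(1+\alpha\press_\node)^2\,(\press^2-\mflux^2\RsT)^2$ is strictly positive on the admissible range: $F'(\press)>0$ by \eqref{eq:pressure_function_F_deriv_p}, and the other three factors are squares of quantities that do not vanish there (in particular $1+\alpha\press_\node>0$ and $\press^2-\mflux^2\RsT>0$). Consequently $\sign(\partial^2\press/\partial\press_\node^2)$ equals the sign of the numerator
\[
  N \define A(\press_\node)\,B(\press)^2 - A(\press)\,B(\press_\node)^2,
\]
where I abbreviate $A(s)\define s^2+\RsT\mflux^2(1+2\alpha s)$ and $B(s)\define s^2-\mflux^2\RsT$.

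Next I would record that both $A$ and $B$ are positive on the interval $(\abs{\mflux}\sqrt{\RsT},1/\abs{\alpha})$: indeed $B(s)=F'(s)\,s\,(1+\alpha s)$ and $A(s)=F''(s)\,s^2(1+\alpha s)^2$, so positivity follows from $F'>0$ (see \eqref{eq:pressure_function_F_deriv_p}) and $F''>0$. Since $B(\press),B(\press_\node)>0$, dividing $N$ by $B(\press)^2B(\press_\node)^2>0$ gives
\[
  \sign(N)=\sign\bigl(h(\press_\node)-h(\press)\bigr),
  \qquad h(s)\define\frac{A(s)}{B(s)^2}.
\]

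The core step is to show that $h$ is strictly decreasing on $(\abs{\mflux}\sqrt{\RsT},1/\abs{\alpha})$. I would compute $h'$, cancel the positive factor $B(s)=s^2-\mflux^2\RsT$ from the numerator of $h'$, and reduce the sign of $h'(s)$ to that of a cubic $P(s)$, with $h'(s)<0$ if and only if $P(s)>0$, where, writing $a\define\RsT\mflux^2>0$,
\[
  P(s)\define s^3+3\alpha a\,s^2+3a\,s+\alpha a^2 = s\,(s^2+3a)+\alpha a\,(3s^2+a).
\]
Because $\alpha<0$ and $s<1/\abs{\alpha}$ force $\abs{\alpha}<1/s$, I can estimate $\alpha a(3s^2+a)=-\abs{\alpha}a(3s^2+a)>-a(3s^2+a)/s$, whence
\[
  P(s)>s\,(s^2+3a)-\frac{a(3s^2+a)}{s}=\frac{s^4-a^2}{s}=\frac{(s^2-a)(s^2+a)}{s}>0,
\]
the final inequality holding since $s^2>\mflux^2\RsT=a$. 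Thus $h$ is strictly decreasing.

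Finally I would invoke the sign relation $\sign(\mflux)=\sign(\press_\node-\press)$ for $x\in(0,\length]$, which is the consequence of \eqref{eq:pressure_function_deriv_x} recorded after Lemma~\ref{thm:press_first_derivatives}. If $\mflux>0$ then $\press_\node>\press$, so monotonicity of $h$ gives $h(\press_\node)<h(\press)$ and hence $N<0$; if $\mflux<0$ then $\press_\node<\press$ and $N>0$; and if $\mflux=0$ then $\press_\node=\press$ and $N=0$. In every case $\sign(N)=-\sign(\mflux)$, which is exactly the claim. I expect the cubic positivity $P(s)>0$ to be the main obstacle; the estimate using $\abs{\alpha}<1/s$ together with $s^2>\mflux^2\RsT$ is precisely what makes the sign definite across the whole admissible range.
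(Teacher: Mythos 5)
Your proof is correct, and it reaches the sign of the numerator by a genuinely different route than the paper. Both arguments begin identically: the denominator of \eqref{eq:pressure_function_deriv_pp} is positive, so everything reduces to the sign of $N = A(\press_\node)B(\press)^2 - A(\press)B(\press_\node)^2$ with $A(s)=s^2+\RsT\mflux^2(1+2\alpha s)$ and $B(s)=s^2-\mflux^2\RsT$, and both finish by invoking $\sign(\mflux)=\sign(\press_\node-\press)$. The difference is the middle step. The paper factors each term as $\press_\node^2\press^4\bigl(1+\RsT\mflux^2\tfrac{1+2\alpha\press_\node}{\press_\node^2}\bigr)(1-\mach)^2$ versus $\press_\node^4\press^2\bigl(1+\RsT\mflux^2\tfrac{1+2\alpha\press}{\press^2}\bigr)(1-\mach_\node)^2$ and compares the three positive factors pairwise, the only nontrivial pair being handled by showing that $s\mapsto 1+\RsT\mflux^2(1+2\alpha s)/s^2$ is decreasing. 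You instead divide by $B(\press)^2B(\press_\node)^2$ and prove that the single function $h=A/B^2$ is strictly decreasing, which after the quotient rule comes down to the positivity of the cubic $P(s)=s(s^2+3a)+\alpha a(3s^2+a)$; your estimate using $\abs{\alpha}<1/s$ and $s^2>a$ to get $P(s)>(s^4-a^2)/s>0$ is correct (I verified the algebra, including that the numerator of $h'$ is exactly $-2P(s)$ over $B(s)^3$). The paper's factorwise comparison is more elementary and avoids any derivative computation beyond the one auxiliary monotonicity; your version concentrates the whole comparison into one monotone quantity, at the price of a sharper cubic inequality, and has the pleasant side benefit of identifying $A(s)=F''(s)s^2(1+\alpha s)^2$ and $B(s)=F'(s)s(1+\alpha s)$, so the positivity of $A$ and $B$ is inherited directly from $F'>0$ and $F''>0$ rather than re-derived as in \eqref{eq:two_alpha_press}. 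One small point of care: your monotonicity argument needs both $\press$ and $\press_\node$ to lie in $(\abs{\mflux}\sqrt{\RsT},1/\abs{\alpha})$, which the paper does assume throughout, so this is not a gap, but it is worth stating explicitly.
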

\begin{proof}
	Because of
	\eqref{eq:press_upper_bound}, \eqref{eq:press_lower_bound}, and
	\eqref{eq:pressure_function_F_deriv_p}, the denominator in
	\eqref{eq:pressure_function_deriv_pp} is positive.
	Thus, only the numerator
	\begin{equation}
		\label{eq:pressure_function_deriv_pp_numerator}
		(\press_\node^2 + \specificGasConstant \temp \mflux^2(1 + 2\alpha
		\press_\node)) (\press^2 - \specificGasConstant \temp \mflux^2)^2
		- (\press^2 + \specificGasConstant \temp \mflux^2(1 + 2 \alpha
		\press)) (\press_\node^2 - \specificGasConstant \temp \mflux^2)^2
	\end{equation}
	determines the sign of $\frac{\partial^2 \press}{\partial
		\press_\node^2}$.
	We note that $(\press_\node^2 + \specificGasConstant \temp
	\mflux^2(1 + 2\alpha \press_\node))$ as well as $(\press^2 +
	\specificGasConstant \temp \mflux^2(1 + 2 \alpha \press))$ are
	positive:
	\begin{subequations}
		\label{eq:two_alpha_press}
		\begin{align}
			\label{eq:two_alpha_press_0}
			\press_\node^2 + \specificGasConstant \temp \mflux^2(1 + 2\alpha
			\press_\node)
			& = (\press_\node^2 - \specificGasConstant \temp \mflux^2) + 2
			\specificGasConstant \temp \mflux^2 (1 + \alpha  \press_\node) >
			0,
			\\
			\label{eq:two_alpha_press_1}
			\press^2 + \specificGasConstant \temp \mflux^2 (1 + 2 \alpha
			\press)
			& = (\press^2 - \specificGasConstant \temp \mflux^2) + 2
			\specificGasConstant \temp \mflux^2 (1 +  \alpha \press) > 0.
		\end{align}
	\end{subequations}
	It follows that both the first and the second term
	in~\eqref{eq:pressure_function_deriv_pp_numerator} are positive.
	We rewrite them as
	\begin{align*}
		(\press_\node^2 + \specificGasConstant \temp \mflux^2 (1 + 2\alpha
		\press_\node)) (\press^2 - \specificGasConstant \temp \mflux^2)^2
		& = \press_\node^2 \press^4 \left(1 + \specificGasConstant \temp
		\mflux^2 \frac{1 + 2\alpha \press_\node}{\press_\node^2}\right)
		\left(1 - \mach\right)^2,\\
		(\press^2 + \specificGasConstant \temp \mflux^2(1 + 2 \alpha
		\press)) (\press_\node^2 - \specificGasConstant \temp \mflux^2)^2
		& = \press_\node^4 \press^2 \left(1 + \specificGasConstant \temp
		\mflux^2 \frac{1 + 2\alpha \press}{\press^2}\right) \left(1 -
		\mach_\node\right)^2
	\end{align*}
	using the squared mach numbers
	\begin{equation*}
		\mach = \specificGasConstant \temp \frac{\mflux^2}{\press^2},
		\quad \mach_\node = \specificGasConstant \temp
		\frac{\mflux^2}{\press_\node^2}.
	\end{equation*}
	Now, if the sign of $\mflux$ is given, all terms are easily
	comparable except for
	\begin{equation*}
		1 + \specificGasConstant \temp \mflux^2
		\frac{1 + 2\alpha \press_\node}{\press_\node^2}
		\quad \text{and} \quad
		1 + \specificGasConstant \temp \mflux^2 \frac{1 + 2\alpha
			\press}{\press^2}.
	\end{equation*}
	We take the derivative \wrt\ the pressure and get
	\begin{align*}
		\dpar{\press}{} \left(1 + \specificGasConstant \temp \mflux^2
		\frac{1 + 2\alpha \press}{\press^2}\right)
		& = \specificGasConstant \temp \mflux^2 \dpar{\press}{}\left(\frac{1
			+ 2 \alpha \press}{\press^2}\right)
		= \specificGasConstant \temp \mflux^2 \frac{- 2 \press - 2 \alpha
			\press^2}{\press^4}
		\\
		& = - 2 \specificGasConstant \temp \mflux^2 \frac{\press (1 + \alpha
			\press)}{\press^4}
		< 0.
	\end{align*}
	We can now determine the sign of $\frac{\partial^2 \press}{\partial
		\press_\node^2}$ in dependence of $\mflux$.
	If $\mflux = 0$, then $\press = \press_\node$ and, therefore,
	\begin{equation*}
		\frac{\partial^2 \press}{\partial \press_\node^2} = 0
	\end{equation*}
	holds.
	If $\mflux > 0$, then $\press_\node > \press$ and $\mach >
	\mach_\node$.
	We thus have
	\begin{equation*}
		\frac{\partial^2 \press}{\partial \press_\node^2} < 0.
	\end{equation*}
	If $\mflux < 0$, we analogously get
	\begin{equation*}
		\frac{\partial^2 \press}{\partial \press_\node^2} > 0. \qedhere
	\end{equation*}
\end{proof}

Next, we analyze the mixed second-order derivative of $\press$.
\begin{lemma}
	\label{thm:pressure_function_deriv_pq_sign}
	It holds
	\begin{equation*}
		\frac{\partial^2 \press}{\partial \mflux \partial \press_\node}
		\begin{cases}
			= 0, & \text{for } \mflux = 0,\\
			> 0, & \text{for } \mflux \neq 0
		\end{cases}
	\end{equation*}
	for $\press \in (\abs{\mflux} \sqrt{\specificGasConstant \temp},
	1 / \abs{\alpha})$ and $x \in (0, \length]$.
\end{lemma}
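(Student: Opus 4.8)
The plan is to start from the closed form of the first-order derivative
\[
  \dpar{\press_\node}{\press}(x, \press_\node, \mflux)
  = \frac{F'(\press_\node)}{F'(\press)}
\]
obtained in Lemma~\ref{thm:press_first_derivatives}, see
\eqref{eq:pressure_function_deriv_p}, and to differentiate it once more with
respect to~$\mflux$. Two dependencies must be tracked: the factor $F'$ carries
an \emph{explicit} dependence on $\mflux$ through the term
$\specificGasConstant\temp\mflux^2$ in \eqref{eq:pressure_function_F_deriv_p},
and $\press = \press(x,\press_\node,\mflux)$ depends on $\mflux$ as well. Since
$\dpar{\press_\node}{\press} > 0$ holds everywhere, I would not expand the
quotient directly but take its logarithmic derivative, which turns the quotient
into a sum of easily signed pieces.

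Writing $F_{\press\mflux}$ and $F_{\press\press} = F''$ for the partial
derivatives of $F$ in its pressure argument, logarithmic differentiation yields
\[
  \frac{1}{\dpar{\press_\node}{\press}}
  \frac{\partial^2 \press}{\partial \mflux \, \partial \press_\node}
  = \underbrace{\frac{F_{\press\mflux}(\press_\node)}{F'(\press_\node)}
    - \frac{F_{\press\mflux}(\press)}{F'(\press)}}_{=:A}
  - \underbrace{\frac{F_{\press\press}(\press)}{F'(\press)}
    \dpar{\mflux}{\press}}_{=:B},
\]
where the summand $B$ arises from the chain rule applied to $F'(\press)$. As
$\dpar{\press_\node}{\press} > 0$, the sign of the mixed derivative equals the
sign of $A - B$. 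Computing $F_{\press\mflux}(\press) =
-2\specificGasConstant\temp\mflux / [\press(1+\alpha\press)]$ directly from
\eqref{eq:pressure_function_F_deriv_p} and dividing by $F'(\press)$, I obtain
$F_{\press\mflux}(\press)/F'(\press) =
-2\specificGasConstant\temp\mflux/(\press^2 - \specificGasConstant\temp\mflux^2)$
and the analogous expression at $\press_\node$, so that
\[
  A = -2 \specificGasConstant\temp\mflux \,
  \frac{\press^2 - \press_\node^2}
  {(\press_\node^2 - \specificGasConstant\temp\mflux^2)
    (\press^2 - \specificGasConstant\temp\mflux^2)}.
\]

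The sign analysis then closes the argument. For $\mflux = 0$ we have $\press =
\press_\node$, and both $F_{\press\mflux}$ and $\dpar{\mflux}{\press}$ vanish,
hence $A = B = 0$ and the mixed derivative is $0$. For $\mflux \neq 0$,
Assumption~\ref{thm:assumption-gas-bounds} guarantees $\press^2 -
\specificGasConstant\temp\mflux^2 > 0$ and $\press_\node^2 -
\specificGasConstant\temp\mflux^2 > 0$, while the sign relation $\sign(\mflux) =
\sign(\press_\node - \press)$ following from
\eqref{eq:pressure_function_deriv_x} shows that $\mflux$ and $\press^2 -
\press_\node^2$ have opposite signs; therefore $A > 0$. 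For the term $B$, I
would use $F_{\press\press} = F'' > 0$ and $F'(\press) > 0$ together with
$\dpar{\mflux}{\press} < 0$ for $\mflux \neq 0$, see
\eqref{eq:pressure_function_deriv_q}, to conclude $B < 0$ and hence $-B > 0$.
Consequently $A - B > 0$, which gives $\frac{\partial^2 \press}{\partial \mflux
  \, \partial \press_\node} > 0$.

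The delicate point---and the reason for routing the computation through the
logarithmic derivative---is that the closed form of $\dpar{\mflux}{\press}$ in
\eqref{eq:pressure_function_deriv_q} contains the logarithm
$\ln\big((\pubu)\press / [(\pub)\press_\node]\big)$, whose sign would be the main
obstacle in any brute-force evaluation. The key is that only the sign of
$\dpar{\mflux}{\press}$, already established in
Lemma~\ref{thm:press_first_derivatives}, enters the term $B$, so this logarithm
never has to be controlled explicitly.
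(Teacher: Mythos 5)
Your proof is correct and follows essentially the same route as the paper: both differentiate $\dpar{\press_\node}{\press} = F'(\press_\node)/F'(\press)$ with respect to $\mflux$ and split the result into a term driven by the explicit $\mflux$-dependence of $F'$ (positive because $\sign(\mflux) = \sign(\press_\node - \press)$) plus a term proportional to $-F''(\press)\,\dpar{\mflux}{\press}$ (positive because $F'' > 0$ and $\dpar{\mflux}{\press} < 0$ for $\mflux \neq 0$). Your logarithmic-derivative bookkeeping arrives at exactly the paper's final two-term expression while avoiding its lengthy quotient-rule expansion.
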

\begin{proof}
	It holds
	\begingroup
	\allowdisplaybreaks
	\begin{align*}
		&\!\!\!\!\!\!\!\!\!\! \frac{\partial^2 \press}{\partial \mflux \partial \press_\node}
		=
		\dpar{\mflux}{}\left(\frac{F'(\press_\node)}{F'(\press)}\right)
		= \dpar{\mflux}{}\left(\frac{(\press_\node^2 - \mflux^2
			\specificGasConstant \temp) \press (1 + \alpha \press)}{(\press^2 -
			\mflux^2 \specificGasConstant \temp) \press_\node (1 + \alpha
			\press_\node)}\right)
		\\
		= \ & \frac{\left(-2 \mflux \specificGasConstant \temp \press (1 +
			\alpha \press) + (\press_\node^2 - \mflux^2
			\specificGasConstant \temp) \left((1 + \alpha \press)
			\dpar{\mflux}{\press} + \press \alpha
			\dpar{\mflux}{\press}\right)\right) (\press^2 - \mflux^2
			\specificGasConstant \temp) \press_\node (1 + \alpha
			\press_\node)}{(\press^2 - \mflux^2 \specificGasConstant
			\temp)^2 \press_\node^2 (1 + \alpha \press_\node)^2}
		\\
		& \quad -\, \frac{(\plbu) \press (\pub) \left(2 \press
			\dpar{\mflux}{\press} - 2 \mflux \RsT\right) \press_\node
			(\pubu)}{(\press^2 - \mflux^2 \specificGasConstant \temp)^2
			\press_\node^2 (1 + \alpha \press_\node)^2}
		\\
		= \ & \frac{\left(-2 \mflux \specificGasConstant \temp \press (1 +
			\alpha \press) + (\press_\node^2 - \mflux^2
			\specificGasConstant \temp) (1 + 2 \alpha \press)
			\dpar{\mflux}{\press}\right) (\press^2 - \mflux^2
			\specificGasConstant \temp)}{(\press^2 - \mflux^2
			\specificGasConstant \temp)^2 \press_\node (1 + \alpha
			\press_\node)}
		\\
		& \quad -\, \frac{2 (\plbu) \press^2 (\pub) \dpar{\mflux}{\press} -
			2 \mflux \RsT (\plbu) \press (\pub)}{(\press^2 - \mflux^2
			\specificGasConstant \temp)^2 \press_\node (1 + \alpha
			\press_\node)}
		\\
		= \ & \frac{2 \mflux \RsT \press (\pub) \left((\plbu) -
			(\plb)\right)}{(\press^2 - \mflux^2 \specificGasConstant
			\temp)^2 \press_\node (1 + \alpha \press_\node)}
		\\
		& \quad + \frac{(\plbu) \left((1 + 2 \alpha \press) (\plb) - 2
			\press^2 (\pub)\right) \dpar{\mflux}{\press}}{(\press^2 - \mflux^2
			\specificGasConstant \temp)^2 \press_\node (1 + \alpha
			\press_\node)}
		\\
		= \ & \frac{2 \mflux \specificGasConstant \temp \press (1 + \alpha
			\press) (\press_\node^2 - \press^2) - (\press_\node^2 -
			\mflux^2 \specificGasConstant \temp) (\press^2 + \mflux^2
			\specificGasConstant \temp (1 + 2 \alpha \press))
			\dpar{\mflux}{\press}}{(\press^2 - \mflux^2 \specificGasConstant
			\temp)^2 \press_\node (1 + \alpha \press_\node)}.
	\end{align*}
	\endgroup
	We note that $(\press^2 + \mflux^2 \specificGasConstant \temp (1 + 2
	\alpha \press)) > 0$ holds because of \eqref{eq:two_alpha_press_1}.
	Now, if $\mflux = 0$, then $\press_\node = \press$ and
	$\dpar{\mflux}{\press} = 0$ holds.
	It follows
	\begin{equation*}
		\frac{\partial^2 \press}{\partial \mflux \partial \press_\node} =
		0.
	\end{equation*}
	If $\mflux \neq 0$, then $\mflux (\press_\node^2 - \press^2) > 0$
	as well as $\dpar{\mflux}{\press} < 0$ holds and we get
	\begin{equation*}
		\frac{\partial^2 \press}{\partial \mflux \partial \press_\node} >
		0. \qedhere
	\end{equation*}
\end{proof}
From $\dpar{\mflux}{\press} = 0$ for $\mflux = 0$ and by using
Schwarz's theorem for $\mflux \neq 0$, it follows
\begin{equation}
	\label{eq:pressure_function_deriv_qp_sign}
	\frac{\partial^2 \press}{\partial \press_\node \partial \mflux}
	= \frac{\partial^2 \press}{\partial \mflux \partial \press_\node}
	\begin{cases}
		= 0, & \text{for } \mflux = 0,\\
		> 0, & \text{for } \mflux \neq 0.
	\end{cases}
\end{equation}

For the second-order derivative \wrt\ $\mflux$ of $\press$,
we can only give the sign for positive mass flux~$\mflux$.
\begin{lemma}
	\label{thm:pressure_function_deriv_qq}
	It holds\\
	\begin{equation*}
		\frac{\partial^2 \press}{\partial \mflux^2} < 0
	\end{equation*}
	for $\press \in (\abs{\mflux} \sqrt{\specificGasConstant \temp},
	1 / \abs{\alpha})$, $x \in (0, \length]$, and $\mflux > 0$.
\end{lemma}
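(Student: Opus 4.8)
The plan is to treat $\press$ as an implicit function of $\mflux$ defined through the pressure equation~\eqref{eq:pressure_equation} and to differentiate that relation twice with respect to $\mflux$. The subtlety is that both sides of~\eqref{eq:pressure_equation} depend on $\mflux$ in two ways: explicitly, through the coefficients $\mflux^2\RsT$ occurring in $F$ and through the friction term $\tfrac12\RsT\mflux\abs{\mflux}\nfric\length$, and implicitly, through $\press=\press(\length,\press_\node,\mflux)$. Writing $\partial_\mflux F|_\press$ for the partial of $F$ in $\mflux$ at frozen $\press$, a first differentiation reproduces~\eqref{eq:pressure_function_deriv_q}, which I record in the compact form
\[
  F'(\press)\,\dparshort{\mflux}{\press}
  = 2\mflux\RsT\,\Lambda - \RsT\abs{\mflux}\nfric\length,
  \qquad
  \Lambda \define \ln\!\left(\frac{(\pubu)\,\press}{(\pub)\,\press_\node}\right).
\]
This already shows $\dparshort{\mflux}{\press}<0$ for $\mflux>0$, since $\press_\node>\press$ by~\eqref{eq:pressure_function_deriv_x} forces $\Lambda<0$ while $\nfric\length\geq 0$.

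Differentiating this identity once more in $\mflux$ (now with $\mflux>0$, so $\abs{\mflux}=\mflux$) is the computational heart of the proof. On the left I use $\tfrac{d}{d\mflux}F'(\press)=F''(\press)\,\dparshort{\mflux}{\press}+\partial_\mflux F'|_\press$ with the explicit partial $\partial_\mflux F'|_\press=-2\mflux\RsT/(\press\,(\pub))$; on the right I differentiate $\Lambda$ through $\press$ using $\partial_\press\Lambda=1/(\press\,(\pub))$ and handle the friction term directly. The key simplification is to feed the displayed first-order identity back in, replacing $2\RsT\Lambda-\RsT\nfric\length$ by $F'(\press)\,\dparshort{\mflux}{\press}/\mflux$. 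After collecting the two equal contributions proportional to $\mflux\RsT\,\dparshort{\mflux}{\press}/(\press\,(\pub))$, everything telescopes into the clean form
\[
  \frac{\partial^2\press}{\partial\mflux^2}
  = \frac{\dparshort{\mflux}{\press}}{\mflux}
  + \frac{4\mflux\RsT\,\dparshort{\mflux}{\press}}{\press\,(\pub)\,F'(\press)}
  - \frac{F''(\press)}{F'(\press)}\left(\dparshort{\mflux}{\press}\right)^2 .
\]

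The sign is then immediate. For $\mflux>0$ we have $\dparshort{\mflux}{\press}<0$ by~\eqref{eq:pressure_function_deriv_q}, while $F'(\press)>0$ by~\eqref{eq:pressure_function_F_deriv_p}, $F''(\press)>0$, and $\press\,(\pub)>0$ by~\eqref{eq:press_upper_bound} and~\eqref{eq:press_lower_bound}. Hence all three summands are strictly negative: the first because $\dparshort{\mflux}{\press}/\mflux<0$, the second because it is a negative quantity divided by a positive one, and the third because $F''/F'>0$ multiplies $(\dparshort{\mflux}{\press})^2>0$ with a minus sign. Therefore $\partial^2\press/\partial\mflux^2<0$, as claimed. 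I expect the main obstacle to be purely the bookkeeping in the second differentiation — keeping the explicit $\mflux$-dependence of $F'$ separate from the implicit one and noticing that the first-order identity can be reused to cancel the logarithmic and friction contributions; once that cancellation is spotted, the resulting three-term formula and its sign require no further estimation.
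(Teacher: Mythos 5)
Your proof is correct, but it follows a genuinely different route from the paper's. The paper first rewrites $\dparshort{\mflux}{\press}$ as $-2\mflux\mfluxaux\press(\pub)/(\plb)$ using the auxiliary function $\mfluxaux$ from~\eqref{eq:massflux-aux}, computes $\dparshort{\mflux}{\mfluxaux}=\tfrac{2\mflux\RsT\mfluxaux}{\plb}>0$, and then grinds through a quotient-rule expansion to exhibit $\partial^2_{\mflux}\press$ as a sum of two manifestly negative terms (using $\sign(\mfluxaux)=\sign(\mflux)$ and the positivity of $\press^2+\mflux^2\RsT(1+2\alpha\press)$ from~\eqref{eq:two_alpha_press_1}). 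You instead differentiate the implicit relation~\eqref{eq:pressure_equation} twice, carefully separating the explicit $\mflux$-dependence of $F$ and $F'$ from the implicit one, and the decisive move --- substituting the first-order identity back to eliminate the logarithmic and friction terms --- collapses everything into the three-term formula
\begin{equation*}
  \frac{\partial^2\press}{\partial\mflux^2}
  = \frac{\dparshort{\mflux}{\press}}{\mflux}
  + \frac{4\mflux\RsT\,\dparshort{\mflux}{\press}}{\press\,(\pub)\,F'(\press)}
  - \frac{F''(\press)}{F'(\press)}\left(\dparshort{\mflux}{\press}\right)^2,
\end{equation*}
which I have verified is algebraically consistent with the differentiation (the two equal contributions $2\mflux\RsT\dparshort{\mflux}{\press}/(\press(\pub))$ do indeed combine as you claim, and $\partial_\press\Lambda=1/(\press(\pub))$ is right). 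The sign argument is then immediate from $\dparshort{\mflux}{\press}<0$, $F'>0$, and $F''>0$, all of which are already established in the paper. What your approach buys is a shorter and more transparent final expression whose negativity needs no auxiliary sign lemmas; what the paper's approach buys is that the function $\mfluxaux$ and its derivative are reused verbatim in the subsequent Lemmas~\ref{thm:deriv-q-pos-flow} and~\ref{thm:deriv-q-pos-flow-deriv-q}, so the bookkeeping is amortized over three results. Two trivial remarks: you write $\nfric\length$ where the statement allows general $x\in(0,\length]$ --- the argument is unchanged, but the general $x$ should be carried; and your re-derivation of $\dparshort{\mflux}{\press}<0$ is unnecessary since it is already part of Lemma~\ref{thm:press_first_derivatives}.
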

\begin{proof}
	We introduce the auxiliary function
	\begin{equation}
		\label{eq:massflux-aux}
		\mfluxaux \define \mfluxaux(x, \press_\node, \mflux) \define
		\specificGasConstant \temp \ln\left(\frac{(1 + \alpha \press)
			\press_\node}{(1 + \alpha \press_\node) \press}\right) +
		\frac{1}{2} \specificGasConstant \temp \sign(\mflux) \nfric x
	\end{equation}
	to rewrite the derivative of $\press$ \wrt\ $\mflux$ as
	\begin{equation*}
		\dpar{\mflux}{\press} = \frac{-2 \mflux \mfluxaux \press (1 + \alpha
			\press)}{\press^2 - \mflux^2 \specificGasConstant \temp}.
	\end{equation*}
	It holds
	\begin{equation*}
		\sign(\mfluxaux) = \sign(\press_\node - \press) = \sign(\mflux).
	\end{equation*}
	We can take the derivative of $\mfluxaux$ \wrt\ $\mflux$ for $\mflux
	\neq 0$ and get
	\begin{align*}
		\dpar{\mflux}{\mfluxaux}(x, \press_\node, \mflux)
		& = \specificGasConstant \temp \frac{(1 + \alpha \press_\node)
			\press}{(1 + \alpha \press) \press_\node} \frac{\alpha
			\press_\node \dpar{\mflux}{\press} (1 + \alpha \press_\node)
			\press - (1 + \alpha \press) \press_\node (1 + \alpha
			\press_\node) \dpar{\mflux}{\press}}{(1 + \alpha \press_\node)^2
			\press^2}\\
		& = \frac{- \specificGasConstant \temp}{(1 + \alpha \press) \press} \dpar{\mflux}{\press}
		= \frac{2 \mflux \specificGasConstant \temp \mfluxaux(x,
			\press_\node, \mflux)}{\press^2 - \mflux^2 \specificGasConstant
			\temp}
		> 0.
	\end{align*}
	Now, we can take the second derivative of $\press$ \wrt $\mflux$ and get
	\begin{align*}
		&\!\!\!\!\!\!\!\!\!\!\frac{\partial^2 \press}{\partial \mflux^2}
		= \dpar{\mflux}{}\left(\frac{-2 \mflux \mfluxaux \press (1 +
			\alpha \press)}{\press^2 - \mflux^2 \specificGasConstant
			\temp}\right)
		\\
		= \ & \frac{-2 \left(\mfluxaux \press (1 + \alpha \press) + \mflux
			\press (1 + \alpha \press) \dpar{\mflux}{\mfluxaux} + \mflux
			\mfluxaux (1 + 2 \alpha \press) \dpar{\mflux}{\press}\right)
			(\press^2 - \mflux^2 \specificGasConstant \temp)}{(\press^2 -
			\mflux^2 \specificGasConstant \temp)^2}
		\\
		& \quad + \frac{2 \mflux \mfluxaux \press (1 + \alpha \press) (2
			\press \dpar{\mflux}{\press} - 2 \mflux \specificGasConstant
			\temp)}{(\press^2 - \mflux^2 \specificGasConstant \temp)^2}
		\\
		= \ & \frac{-2 \mfluxaux \left(\left(\press (1 + \alpha \press) +
			\mflux (1 + 2 \alpha \press) \dpar{\mflux}{\press}\right)
			(\press^2 - \mflux^2 \specificGasConstant \temp) - \mflux
			\press (1 + \alpha \press) (2 \press \dpar{\mflux}{\press} - 2
			\mflux \specificGasConstant \temp)\right)}{(\press^2 -
			\mflux^2 \specificGasConstant \temp)^2}
		\\
		& \quad -\, \frac{2 \mflux \press (1 + \alpha \press)
			\dpar{\mflux}{\mfluxaux} (\press^2 - \mflux^2 \specificGasConstant
			\temp)}{(\press^2 - \mflux^2 \specificGasConstant \temp)^2}
		\\
		= \ & \frac{-2 \mfluxaux \left(\press (1 + \alpha \press) (\press^2
			+ \mflux^2 \specificGasConstant \temp) - \mflux (\press^2 +
			\mflux^2 \RsT (1 + 2 \alpha \press))
			\dpar{\mflux}{\press}\right)}{(\press^2 - \mflux^2
			\specificGasConstant \temp)^2}
		\\
		& \quad -\, \frac{2 \mflux \press (1 + \alpha \press)
			\dpar{\mflux}{\mfluxaux}}{\press^2 - \mflux^2 \specificGasConstant
			\temp}.
	\end{align*}
	It holds $\sign(\mfluxaux) = \sign(\mflux)$ and we have $(\press^2 +
	\mflux^2 \specificGasConstant \temp (1 + 2 \alpha \press)) > 0$
	because of \eqref{eq:two_alpha_press_1}.
	For $\mflux > 0$, we get $\dpar{\mflux}{\press} < 0$ and
	$\dpar{\mflux}{\mfluxaux} > 0$.
	The claim then follows.
\end{proof}
To compensate the fact that the last result only holds for $\mflux >
0$, we estimate the value of $\dpar{\mflux}{\press}$ for $\mflux < 0$
with values for $\mflux > 0$.
\begin{lemma}
	\label{thm:deriv-q-pos-flow}
	It holds
	\begin{equation*}
		\dpar{\mflux}{\press}(x, \press_\node, \mflux) >
		\dpar{\mflux}{\press}(x, \press(x, \press_\node, \mflux), -\mflux)
	\end{equation*}
	for $\press \in (\abs{\mflux} \sqrt{\specificGasConstant \temp},
	1/\abs{\alpha})$, $x \in (0, \length]$, and $\mflux < 0$.
\end{lemma}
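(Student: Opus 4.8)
The plan is to exploit a flow-reversal symmetry of the pressure function so that the two derivatives in the claim share the same numerator and differ only in their denominator, which is then controlled by the convexity of~$F$. First I would introduce the abbreviation $\bar{\press} \define \press(x, \press_\node, \mflux)$ for the downstream pressure and record the crucial observation that the function~$F$ from~\eqref{eq:pressure_function_F} depends on the flux only through~$\mflux^2$ and is therefore invariant under $\mflux \mapsto -\mflux$. Applying~$F$ to the defining relation~\eqref{eq:pressure_function} gives $F(\bar{\press}) = F(\press_\node) - \tfrac12 \RsT \mflux \abs{\mflux} \nfric x$; feeding $\bar{\press}$ back into~\eqref{eq:pressure_function} with the opposite flux $-\mflux$ adds exactly the same term with the opposite sign, so before inverting the strictly increasing~$F$ one obtains the \emph{reversal identity}
\begin{equation*}
  \press(x, \bar{\press}, -\mflux) = \press_\node .
\end{equation*}
(Here $(x,\bar{\press},-\mflux)$ lies in the admissible domain, so the derivative formula applies, since $\bar{\press}$ and $\press_\node$ are assumed to lie in $(\abs{\mflux}\sqrt{\RsT}, 1/\abs{\alpha})$.)

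Next I would substitute the reversal identity into the first-order derivative formula~\eqref{eq:pressure_function_deriv_q}. Writing $\Lambda \define \ln\bigl(\tfrac{(\pubu)\bar{\press}}{(\pubaux{\bar{\press}})\press_\node}\bigr)$ and $N \define 2 \mflux \RsT \Lambda - \RsT \abs{\mflux}\nfric x$, the left-hand derivative is simply $\dpar{\mflux}{\press}(x, \press_\node, \mflux) = N / F'(\bar{\press})$. Evaluating the right-hand derivative at $(x, \bar{\press}, -\mflux)$ and using $\press(x, \bar{\press}, -\mflux) = \press_\node$, the logarithm in~\eqref{eq:pressure_function_deriv_q} becomes $-\Lambda$, the prefactor $2(-\mflux)$ cancels this sign flip, and $\abs{-\mflux} = \abs{\mflux}$; hence the numerator is again exactly~$N$, while the denominator (which depends on the flux only through $\mflux^2$) becomes $F'(\press_\node)$. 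Therefore $\dpar{\mflux}{\press}(x, \bar{\press}, -\mflux) = N / F'(\press_\node)$, and the claim reduces to the single inequality $N/F'(\bar{\press}) > N/F'(\press_\node)$.

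Finally I would settle this sign comparison. By Lemma~\ref{thm:press_first_derivatives} the left-hand derivative is strictly negative for $\mflux \neq 0$, and since $F'(\bar{\press}) > 0$ by~\eqref{eq:pressure_function_F_deriv_p}, this forces $N < 0$. For $\mflux < 0$ the sign condition~\eqref{eq:pressure_function_deriv_x} yields $\press_\node < \bar{\press}$, and because $F$ is convex ($F'' > 0$) its derivative $F'$ is strictly increasing, so $0 < F'(\press_\node) < F'(\bar{\press})$. Dividing the negative quantity~$N$ by the larger positive denominator gives the larger (less negative) value, i.e.\ $N/F'(\bar{\press}) > N/F'(\press_\node)$, which is precisely the asserted inequality. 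I expect the main obstacle to be establishing the reversal identity cleanly and correctly tracking the sign flip of the logarithm together with the $\mflux \mapsto -\mflux$ substitution in the numerator; once that bookkeeping is verified, the remaining comparison is immediate from the convexity of~$F$ and the sign of~$N$.
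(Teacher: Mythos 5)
Your proposal is correct and follows essentially the same route as the paper's proof: both rest on the flow-reversal identity $\press(x,\press(x,\press_\node,\mflux),-\mflux)=\press_\node$, observe that the two derivatives share the same (negative) numerator and differ only in having $F'(\press(x,\press_\node,\mflux))$ versus $F'(\press_\node)$ in the denominator, and conclude via $\press_\node<\press(x,\press_\node,\mflux)$ for $\mflux<0$ together with the strict monotonicity of $F'$. The only cosmetic difference is that you make the common numerator $N$ explicit, whereas the paper phrases the same comparison as multiplying one derivative by the factor $F'(\press_\node)/F'(\press)\in(0,1)$.
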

\begin{proof}
	Equation \eqref{eq:pressure_equation}, \ie,
	\begin{equation*}
		F(\press) = F(\press_\node) - \frac{1}{2}
		\specificGasConstant \temp \mflux \abs{\mflux} \nfric x,
	\end{equation*}
	is equivalent to
	\begin{equation*}
		F(\press_\node) = F(\press) - \frac{1}{2}
		\specificGasConstant \temp (-\mflux) \abs{-\mflux} \nfric x.
	\end{equation*}
	Therefore, if $(\press_\node, \mflux, \press)$ fulfills
	\eqref{eq:pressure_equation}, then $(\press, -\mflux, \press_\node)$
	fulfills \eqref{eq:pressure_equation} as well.
	This means, we can write the pressure $\press_\node$ as a function of
	$\press$ and $\mflux$:
	\begin{equation*}
		\label{eq:pressure_function_u}
		\press_\node(x, \press, \mflux) = \press(x, \press, -\mflux).
	\end{equation*}
	Now, we have
	\begin{equation*}
		\label{eq:pressure_uv_deriv_q}
		\begin{aligned}
			\dpar{\mflux}{\press}(x, \press_\node, \mflux)
			& = \frac{2\mflux \specificGasConstant \temp
				\ln\left(\frac{(\pubu) \press}{(\pub) \press_\node}\right) -
				\specificGasConstant \temp \abs{\mflux} \nfric x}{F'(\press)}
			\\
			& = \frac{2(-\mflux) \specificGasConstant \temp
				\ln\left(\frac{(\pub) \press_\node}{(\pubu) \press}\right) -
				\specificGasConstant \temp \abs{-\mflux} \nfric x}{F'(\press)}
			\\
			& = \frac{F'(\press_\node)}{F'(\press)}
			\dpar{\mflux}{\press_\node}(x, \press, -\mflux)
			\\
			& > \dpar{\mflux}{\press_\node}(x, \press, -\mflux)
		\end{aligned}
	\end{equation*}
	because $\press_\node < \press$ holds for $\mflux < 0$, $F'$ is strictly
	increasing in $\press$, and $\dpar{\mflux}{\press_\node}(x, \press,
	-\mflux) < 0$ for $\mflux \neq 0$.
\end{proof}
Now, for $\dpar{\mflux}{\press}(x, \press(x, \press_\node, \mflux),
-\mflux)$, we can determine the sign of its derivative \wrt~$\mflux$
for $\mflux < 0$.
\begin{lemma}
	\label{thm:deriv-q-pos-flow-deriv-q}
	It holds
	\begin{equation*}
		\frac{\partial^2 \press}{\partial \mflux^2}(x, \press(x,
		\press_\node, \mflux), -\mflux) > 0
	\end{equation*}
	for $\press \in (\abs{\mflux} \sqrt{\specificGasConstant \temp},
	1 / \abs{\alpha})$, $x \in (0, \length]$, and $\mflux < 0$.
\end{lemma}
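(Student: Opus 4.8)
The plan is to first read the statement correctly. Since the flux argument $-\mflux$ is positive, the \emph{literal} second partial $\frac{\partial^2\press}{\partial\mflux^2}$ evaluated at $(x,\press(x,\press_\node,\mflux),-\mflux)$ would be negative by Lemma~\ref{thm:pressure_function_deriv_qq}; as the sentence preceding the lemma makes clear, the object of interest is instead the total $\mflux$-derivative of the surrogate quantity
\[
  \Psi(\mflux) \define \dparshort{\mflux}{\press}\bigl(x,\press(x,\press_\node,\mflux),-\mflux\bigr)
\]
from Lemma~\ref{thm:deriv-q-pos-flow}, with $x$ and $\press_\node$ held fixed. So I would show $\frac{d}{d\mflux}\Psi(\mflux)>0$ for $\mflux<0$.

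A naive chain rule, writing $\press=\press(x,\press_\node,\mflux)$, gives
\[
  \frac{d}{d\mflux}\Psi(\mflux)
  = \frac{\partial^2\press}{\partial\press_\node\partial\mflux}(x,\press,-\mflux)\,\dparshort{\mflux}{\press}(x,\press_\node,\mflux)
  - \frac{\partial^2\press}{\partial\mflux^2}(x,\press,-\mflux).
\]
Here the first summand is negative (the mixed derivative is positive by~\eqref{eq:pressure_function_deriv_qp_sign} because the flux slot $-\mflux\neq0$, while $\dparshort{\mflux}{\press}(x,\press_\node,\mflux)<0$ by~\eqref{eq:pressure_function_deriv_q}) and the second summand is positive (Lemma~\ref{thm:pressure_function_deriv_qq}). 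The hard part, and the reason this is the main obstacle, is that this raw form pits the two contributions against each other: inserting the explicit second-derivative formulas directly would force a delicate two-sided domination estimate.

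I would avoid this entirely by reparametrizing through the auxiliary function $\mfluxaux$ from the proof of Lemma~\ref{thm:pressure_function_deriv_qq}. Using the symmetry identity $\press(x,\press,-\mflux)=\press_\node$ together with the representation $\dparshort{\mflux}{\press}=-2\mflux\,\mfluxaux\,\press(\pub)/(\plb)$, evaluated at the mirror configuration (node pressure $\press$, flux $-\mflux$, downstream pressure $\press_\node$), the surrogate collapses to the closed form
\[
  \Psi(\mflux) = \frac{2\press_\node(\pubu)}{\plbu}\,\mflux\,\mfluxaux^{\mathrm m},
  \qquad
  \mfluxaux^{\mathrm m} = \RsT\ln\!\left(\frac{(\pubu)\press}{(\pub)\press_\node}\right) + \tfrac12\RsT\nfric x,
\]
whose only $\mflux$-dependence in the logarithm is through $\press$, so that $\frac{d}{d\mflux}\mfluxaux^{\mathrm m}=\RsT\,\dparshort{\mflux}{\press}(x,\press_\node,\mflux)/(\press(\pub))$. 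Differentiating and collecting the coefficient of $\mfluxaux^{\mathrm m}$, which assembles as $(\plbu)+2\mflux^2\RsT=\press_\node^2+\mflux^2\RsT$, yields
\[
  \frac{d}{d\mflux}\Psi(\mflux)
  = \frac{2\press_\node(\pubu)}{(\plbu)^2}\left[\mfluxaux^{\mathrm m}\bigl(\press_\node^2+\mflux^2\RsT\bigr)
  + \mflux\,\RsT\,\frac{\dparshort{\mflux}{\press}(x,\press_\node,\mflux)}{\press(\pub)}\,(\plbu)\right].
\]
Now positivity is immediate and needs no size comparison: the prefactor is positive; $\mfluxaux^{\mathrm m}>0$ since $\sign(\mfluxaux)=\sign(-\mflux)>0$ at the mirror configuration; $\press_\node^2+\mflux^2\RsT>0$; and in the bracket's second term $\mflux<0$ together with $\dparshort{\mflux}{\press}(x,\press_\node,\mflux)<0$ makes the product positive, while $\plbu>0$ by~\eqref{eq:press_lower_bound}. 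Hence both bracketed terms are strictly positive, giving $\frac{d}{d\mflux}\Psi(\mflux)>0$, which is the claim. The crux is therefore not a hard estimate but the bookkeeping step of writing $\Psi$ via $\mfluxaux$, which turns the competing-sign situation into a sum of manifestly positive pieces.
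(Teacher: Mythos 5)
Your proposal is correct and follows essentially the same route as the paper: both exploit the symmetry identity $\press(x,\press,-\mflux)=\press_\node$ to write the mirror first derivative in the closed form $-2\mflux\,\mfluxaux\,\press_\node(\pubu)/(\plbu)$ via the auxiliary function $\mfluxaux$, then differentiate this quotient in $\mflux$ and observe that every resulting term is positive. The only cosmetic difference is that you work with $\mfluxaux^{\mathrm m}=-\mfluxaux$ and collect the numerator into two positive pieces, whereas the paper keeps $\mfluxaux$ (negative for $\mflux<0$) and argues positivity of three pieces; your identification of the statement as the total $\mflux$-derivative of the composite is exactly the paper's reading.
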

\begin{proof}
	We know that
	\begin{equation*}
		\press_\node = \press(x, \press(x, \press_\node, \mflux), -\mflux)
	\end{equation*}
	holds.
	Therefore, we can write
	\begin{align*}
		\dpar{\mflux}{\press}(x, \press(x, \press_\node, \mflux), -\mflux)
		& = \frac{2 (- \mflux) \RsT \ln\left(\frac{(\pubaux{\press(x,
					\press_\node, \mflux)}) \press_\node}{(\pubu) \press(x,
				\press_\node, \mflux)}\right) - \RsT \abs{-\mflux} \nfric
			x}{F'(\press_\node)}\\
		& = \frac{-2 \mflux \mfluxaux \press_\node (\pubu)}{\plbu}
	\end{align*}
	by using the definition~\eqref{eq:massflux-aux} of $\mfluxaux$.
	Then, the second-order derivative is given by
	\begin{equation*}
		\frac{\partial^2 \press}{\partial \mflux^2}(x, \press(x,
		\press_\node, \mflux), -\mflux)
		= \press_\node (\pubu) \frac{\left(-2 \mfluxaux - 2 \mflux
			\dpar{\mflux}{\mfluxaux}\right) (\plbu) - (4 \mflux^2
			\mfluxaux \RsT)}{(\plbu)^2}.
	\end{equation*}
	Because of $\sign(\mfluxaux) = \sign(\mflux) = -1$ and
	$\dpar{\mflux}{\mfluxaux} > 0$, the claim follows.
\end{proof}

Now, we gathered all tools that are needed to find the optimal
solution or at least a suitable upper bound for the NLPs $\max_{x \in
	\feasset} \Abs{\dparshort{\press_\node}{\press_\otherNode}}$ and
$\max_{x \in   \feasset} \Abs{\dparshort{\mflux}{\press_\otherNode}}$
that are needed to compute the weighted norm~\eqref{eq:weighted-norm}
for~\eqref{eq:box-bounds-rhs}.
\begin{proof}[Proof of Theorem~\ref{thm:max-deriv-p}]
	The claim is a direct consequence of
	Lemma~\ref{thm:pressure_function_deriv_pp} and
	Lemma~\ref{thm:pressure_function_deriv_pq_sign}.
\end{proof}
\begin{proof}[Proof of Theorem~\ref{thm:max-deriv-q}]
	The first property is a direct consequence of
	\eqref{eq:pressure_function_deriv_qp_sign} and
	Lemma~\ref{thm:pressure_function_deriv_qq}.

	Now, we prove the second property.
	From~\eqref{eq:pressure_function_deriv_qp_sign} it follows that
	$\press_\node = \press_\node^-$ must hold in the minimum.
	We know from Lemma~\ref{thm:deriv-q-pos-flow} that
	\begin{equation*}
		\dparshort{\mflux}{\press_\otherNode}(\press_\node^-, \mflux)
		>
		\dparshort{\mflux}{\press_\otherNode}(\press_\otherNode(\press_\node^-,
		\mflux), -\mflux)
	\end{equation*}
	holds for all $\mflux \in [\mflux^-, \mflux^+] \cap \R_{\leq 0}$.
	Since we are dealing with a compact set, this also holds for the
	minima:
	\begin{equation*}
		\min_{\mflux \in [\mflux^-, \mflux^+] \cap \R_{\leq 0}}
		\dparshort{\mflux}{\press_\otherNode}(\press_\node^-, \mflux)
		> \min_{\mflux \in [\mflux^-, \mflux^+] \cap \R_{\leq 0}}
		\dparshort{\mflux}{\press_\otherNode}(\press_\otherNode(\press_\node^-,
		\mflux), -\mflux).
	\end{equation*}
	Hence, the claim follows from Lemma~\ref{thm:deriv-q-pos-flow-deriv-q}.
\end{proof}

\fi

\printbibliography

\end{document}
